\newtheorem{theorem}{Theorem}[section]
\newtheorem{corollary}[theorem]{Corollary}
\newtheorem{definition}[theorem]{Definition}
\newtheorem{example}[theorem]{Example}
\newtheorem{lemma}[theorem]{Lemma}
\newtheorem{proposition}[theorem]{Proposition}
\newtheorem*{proposition-app}{Proposition}
\newtheorem{remark}[theorem]{Remark}
\newcommand{\R}{\mathbb{R}}
\newcommand{\metric}{\langle \, , \, \rangle}
\newcommand{\disp}{\displaystyle}
\newcommand{\ra}{\rightarrow}
\newcommand{\eps}{\varepsilon}
\newcommand{\II}{\mathrm{II}}
\newcommand{\Sph}{\mathbb{S}}
\newcommand{\di}{\mathrm{d}}
\newcommand{\Ricc}{\mathrm{Ric}}
\newcommand{\lip}{\mathrm{Lip}}
\newcommand{\loc}{\mathrm{loc}}
\newcommand{\tcr}{\textcolor{red}}
\newcommand{\tcb}{\textcolor{blue}}
\newcommand{\capac}{\mathrm{cap}}
\newcommand{\MM}{\mathscr{M}}
\newcommand{\LL}{\mathscr{L}}
\DeclareMathOperator{\diver}{div\,}
\DeclareMathOperator{\dist}{dist}
\begin{document}

\author{Giulio Colombo \and Luciano Mari \and Marco Rigoli}
\title{\textbf{A splitting theorem for capillary graphs under Ricci lower bounds}}
\date{}
\maketitle

%


\maketitle

\begin{abstract}
In this paper, we study capillary graphs defined on a domain $\Omega$ of a complete Riemannian manifold, where a graph is said to be capillary if it has constant mean curvature and locally constant Dirichlet and Neumann conditions on $\partial \Omega$. Our main result is a splitting theorem both for $\Omega$ and for the graph function on a class of manifolds with nonnegative Ricci curvature. As a corollary, we classify capillary graphs over domains that are globally Lipschitz epigraphs or slabs in a product space $N \times \R$, where $N$ has slow volume growth and non-negative Ricci curvature. A technical core of the paper is a new gradient estimate for positive CMC graphs on manifolds with Ricci lower bounds.\footnote{MSC 2020: Primary 53C21, 53C42; Secondary 53C24, 58J65, 31C12, 31B05.\\
Keywords: overdetermined problem, capillarity, CMC graph, splitting, minimal hypersurface.}

\end{abstract}

\tableofcontents

\newpage 


\vspace{1cm}

\noindent \textbf{Competing interests:} the authors declare that they have no conflict of interest.\\
\textbf{Declarations of interest:} none.

\section{Introduction} 

The study of capillary hypersurfaces in an ambient manifold with boundary is a classical subject, \cite{finn_book}, that in recent years stimulated a renewed interest from the mathematical community. A capillary hypersurface $\Sigma$ in a Riemannian manifold with boundary $\bar M$ is a constant mean curvature (CMC) hypersurface with boundary $\partial \Sigma \subset \partial \bar M$, that meets $\partial \bar M$ at a constant angle. For instance, the class includes free boundary minimal hypersurfaces in $\bar M$. Capillary hypersurfaces arise from a variational setup that we now recall, referring to \cite{finn_book,ros_souam} for a more detailed insight.\par 
Fix $\gamma \in (0, \pi/2]$, and suppose that $\Sigma$ is an embedded, connected hypersurface of $\bar M$, with boundary $\partial \Sigma \subset \partial \bar M$ that we assume to be transversal to $\partial \bar M$. Fix a connected, relatively compact open set $U \Subset \bar M$ in such a way that $\Sigma$ divides $U$ into two connected components $A$ and $B$. Let $B$ be the component inside which the angle $\gamma$ is computed. Note that $\partial \overline{B} \cap U$ is the union of a portion of $\Sigma$ and, possibly, of a relatively open subset $\Omega \subset \partial \bar M$. Then, $\Sigma$ is a stationary point for the functional
	\[
	|\Sigma \cap U| - \cos \gamma |\Omega|, 
	\]
with respect to variations that are compactly supported in $U$ and preserve the volumes of $A$ and $B$, if and only if $\Sigma$ is a CMC hypersurface satisfying 
	\[
	\langle \eta, \bar \eta \rangle = \cos \gamma \qquad \text{on } \, \partial \Sigma \cap U, 
	\]
where $\bar \eta$ and $\eta$ are, respectively, the outward pointing unit normals to $\partial \Sigma \hookrightarrow \Omega$ and $\partial \Sigma \hookrightarrow\Sigma$. \par
	Rigidity issues for capillary hypersurfaces have been investigated, to our knowledge, mostly in compact ambient spaces $\bar M$ with a large amount of symmetries. Especially, capillary hypersurfaces in the unit ball $\bar M = \mathbb{B}^{m+1}$ attracted the attention of researchers, also in view of the link between free boundary minimal hypersurfaces in $\mathbb{B}^{m+1}$ and the Steklov eigenvalue problem (cf. \cite{fraserschoen_1,fraserschoen_2} and the references therein). For compact $\Sigma \ra \mathbb{B}^{m+1}$, complete classification results were obtained under the assumptions that $\Sigma$ is stable, see \cite{ros_vergasta,nunes,barbosa} (free boundary case) and \cite{ros_souam,wangxia}.\par
	In the present paper, we study the rigidity problem for capillary hypersurfaces that are globally graphical over $\Omega$. More precisely, given a complete Riemannian manifold $(M^m,\sigma)$ without boundary, and a connected, open domain with smooth boundary $\Omega\subset M$, we let $\bar M = \overline{\Omega} \times \R^+_0$, where hereafter 
	\[
	\R^+_0 : = [0, \infty), \qquad \R^+ : = (0, \infty),
	\]
and consider $\Sigma$ to be the graph of a function
	\[
	u \ : \ \overline{\Omega} \ra [0, \infty) 
	\]
satisfying $u = 0$ on $\partial \Omega$. Having fixed $U \Subset \bar M$, we let $B$ denote the subgraph of $\Sigma$ in $U$. Then, $\Sigma$ is a capillary hypersurface if and only if $u$ satisfies the overdetermined boundary value problem
	\begin{equation}\label{eq_overdet}
	\left\{ \begin{array}{ll}
    \diver \left( \frac{Du}{\sqrt{1+|Du|^2}} \right) = H & \quad \text{on } \, \Omega \\[0.5cm]
	u = 0, \ \ \partial_{\bar \eta} u = - \tan \gamma & \quad \text{on } \, \partial \Omega,
	\end{array}\right.
	\end{equation}
for some constant $H \in \R$ being the (unnormalized) mean curvature of $\Sigma$. Here, as before, $\bar \eta$ is the unit exterior normal to $\partial \Omega \hookrightarrow \Omega$. \par

More generally, we consider solutions $u \in C^2(\overline\Omega)$ of the following overdetermined problem:

\begin{equation}\label{overdet}
	\left\{ \begin{array}{ll}
		\diver \left( \frac{D u}{\sqrt{1+ |D u|^2}}\right) = H & \qquad \text{on } \, \Omega, \\[0.5cm]
		u, \partial_{\bar\eta} u \, \text{ locally constant} & \qquad \text{on } \, \partial\Omega \\[0.3cm]
		\inf_\Omega u > -\infty,
	\end{array}\right.
\end{equation}
where $H\in\R$ is a given constant, whose graphs over $\Omega$ we still call capillary graphs. Our goal is to prove a classification result for both $\Omega$ and $u$ under mild assumptions on $M$. Precisely, we shall deduce from the existence of a solution of \eqref{overdet} both that $\Omega$ splits as a product $I \times N$, for some interval $I \subset \R$, and that $u$ only depends on the split $\R$-direction. In this respect, our main result is strongly inspired by the splitting theorems of J.Cheeger and D.Gromoll \cite{cheeger_gromoll} as well as those of P.Li and J.Wang, for manifolds with positive spectrum \cite{liwang2, liwang} and for properly immersed minimal hypersurfaces in manifolds of non-negative sectional curvature \cite{liwang_crelle}. However, the techniques described below depart in many aspects from those in \cite{cheeger_gromoll,liwang,liwang2}. In particular, the presence of a boundary introduces nontrivial further difficulties. 

To describe our main results, we first list three examples of domains supporting a solution of \eqref{overdet}, that we will hereafter consider as the \emph{standard examples}.

\begin{example}[\textbf{The standard examples}]\label{ex_standard}
\emph{For all of the cases, $\Omega = I \times N$ for an open interval $I \subset \R$ and a complete $(m-1)$-dimensional manifold $(N,\sigma_N)$, endowed with the product metric $\di t^2 + \sigma_N$. For $c_1 \le 0$ and $H \in \R$ define
	\begin{equation}\label{eq_split}
	u_{c_1,H}(t) = \left\{ \begin{array}{ll}
-c_1 t & \quad \text{if } \, H = 0 \\[0.3cm]
	\frac{1}{H} \left( \frac{1}{\sqrt{1+c_1^2}} - \sqrt{1- \left(Ht - \frac{c_1}{\sqrt{1+c_1^2}}\right)^2} \right) & \quad \text{if } \, H \neq 0.
	\end{array}\right.
 	\end{equation}
Then, we have:
\begin{itemize}
\item[(i)] \textbf{Half-hyperplane}. Here, $H = 0$, $\Omega = (0,\infty) \times N$ and $u(t,y) = u_{c_1,0}(t)$ for $c_1 < 0$. In this case, $\partial \Omega$ is connected.
\item[(ii)] \textbf{Piece of half-hyperplane}. Here, $H = 0$, $\Omega = (0,T) \times N$ and $u(t,y) = u_{c_1,0}(t)$, for some $c_1 < 0$. 	
\item[(iii)] \textbf{Piece of cylinder}. Here, $H \neq 0$, $\Omega = (0,T) \times N$ and $u(t,y) = u_{c_1,H}(t)$ for some $c_1 \le 0$ (with $c_1 < 0$ in case $H<0$). For reasons that will be clarified later, we shall restrict to intervals where $u$ is monotone increasing in $t$. Therefore, $T$ satisfies
			\[
			\disp T < \frac{1}{H}\left( 1 + \frac{c_1}{\sqrt{1+c_1^2}}\right) \ \ \ \text{if } \, H > 0, \qquad T \le \frac{1}{|H|}\frac{|c_1|}{\sqrt{1+c_1^2}} \ \ \ \text{if } \, H< 0.
			\]
\end{itemize}
}
\end{example}

\begin{remark}
	\emph{Evidently, in the case $T < \infty$, \eqref{eq_split} relates the constants describing the boundary data on $\partial_2 \Omega = \{T\} \times N$ and of $\partial_1\Omega = \{0\} \times N$. Furthermore, the only standard example with $u$ constant on $\partial\Omega$ is the half-hyperplane. 
	}
\end{remark}

Our goal is to prove that, under suitable assumptions, a domain with $\Ricc \ge 0$ and supporting a non-constant solution of \eqref{overdet} is a standard example, for some complete $N$ with $\Ricc_N \ge 0$. As a corollary of our main result, we will obtain the following classification of capillary domains that can be written as epigraphs or slabs bounded by globally Lipschitz functions, in the case when $N$ has slow volume growth. The next result applies to classify, for instance, globally Lipschitz epigraphs and slabs in $\R^2$, as well as globally Lipschitz bounded slabs in $\R^3$.

\begin{theorem}[\textbf{Lipschitz epigraphs and slabs}] \label{thm-epigraph}
Given $m \ge 2$, let $N$ be a complete, connected, boundaryless Riemannian manifold of dimension $m-1$. If $m \ge 3$, assume further that  
	\begin{equation}\label{ipo_epigraph}
	\Ricc \ge 0, \qquad \limsup_{r \to \infty} \frac{|B_r^N|}{r \log r} < \infty.
	\end{equation}
with $B_r^N$ the ball of radius $r$ in $N$ centered at a fixed origin. Define $M = \R \times N$, and let $\varphi_1,\varphi_2 \in C^\infty(N)$ be globally Lipschitz functions with $\varphi_1 < \varphi_2$ on $N$.
	\begin{itemize}
	\item[(i)] If the epigraph 
		\[
		\Omega : = \Big\{ (\tau,x) \in M \ : \ \tau > \varphi_1(x)\Big\} 
		\]
	supports a non-constant solution of 
	\[
	\begin{cases}
			\diver \left( \dfrac{Du}{\sqrt{1+|Du|^2}} \right) = H & \qquad \text{on } \, \Omega, \\
			u = 0, \ \partial_{\bar\eta} u = c \neq 0 & \qquad \text{on } \, \partial \Omega,\\
			\inf_\Omega u > -\infty,
		\end{cases}
	\]			
	for some $H,c \in \R$, where $\bar \eta$ is the unit exterior normal to $\partial \Omega \hookrightarrow \Omega$, then $H=0$, $c<0$ and one of the following cases occurs:
	\begin{itemize}
		\item [-] $\varphi_1$ is constant (up to translation, $\varphi_1\equiv 0$) and $u(\tau,x) = u_{c,0}(\tau)$, with $u_{c,0}$ as in \eqref{eq_split};
		\item [-] $N$ splits as a Riemannian product $N = \R \times N_0$ for some compact, boundaryless $N_0$ and, by denoting $x = (s,\xi) \in \R \times N_0$, we have
		$$
			\varphi_1(s,\xi) = a_0 s + a_1, \qquad u(\tau,s,\xi) = u_{c,0} \left( \frac{\tau - a_0 s - a_1}{\sqrt{1+a_0^2}} \right)
		$$
		for some constants $a_0,a_1 \in \R$ with $a_0\neq 0$.
	\end{itemize}
	\item[(ii)] If the slab
		\[
		\Omega : = \Big\{ (\tau,x) \in M \ : \ \varphi_1(x) < \tau < \varphi_2(x)\Big\} 
		\]
	supports a non-constant solution of 
	\[
	\begin{cases}
			\diver \left( \dfrac{Du}{\sqrt{1+|Du|^2}} \right) = H & \qquad \text{on } \, \Omega, \\
			u = 0, \ \partial_{\bar\eta} u = c_1 \le 0 & \qquad \text{on } \, \{t = \varphi_1(x)\} \\
			u = b > 0, \ \partial_{\bar\eta} u = c_2 \ge 0 & \qquad \text{on } \, \{t = \varphi_2(x)\} \\
			\inf_\Omega u > -\infty,
		\end{cases}
	\]			
	for some $H,b,c_1,c_2 \in \R$ with $(c_1, c_2) \neq (0,0)$, where $\bar \eta$ is the unit exterior normal to $\partial \Omega \hookrightarrow \Omega$, then one of the following cases occurs:
	\begin{itemize}
		\item [-] $\varphi_1 \equiv a_1$ and $\varphi_2 \equiv a_2$ are constant, and $u(\tau,x) = u_{c_1,H}(\tau - a_1)$, where $c_1 < 0$ if $H \le 0$;
		\item [-] $N$ splits as a Riemannian product $N = \R \times N_0$ for some compact, boundaryless $N_0$ and, denoting $x = (s,\xi) \in \R \times N_0$, we have $\varphi_i(s,\xi) = a_0 s + a_i$, for some constants $a_0,a_1,a_2\in\R$ with $a_0 \neq 0$, and 
		\[
		u = u_{c_1,H} \left( \frac{\tau - a_0 s - a_1}{\sqrt{1+a_0^2}} \right), \qquad \text{with $c_1 < 0$ if $H \le 0$.}
		\]		
	\end{itemize}
	\end{itemize}
Furthermore, if $\varphi_1$ and $\varphi_2$ are a-priori globally bounded, then conclusion (ii) holds with the second in \eqref{ipo_epigraph} replaced by the weaker 
	\begin{equation} \label{ipo_epi_weak}
	\limsup_{r \to \infty} \frac{|B_r^N|}{r^2 \log r} < \infty.
	\end{equation}
In this case, $\varphi_1$ and $\varphi_2$ are constant.
\end{theorem}
\subsubsection*{Capillary graphs as overdetermined problems}

The problem of classifying domains $\Omega$ supporting a solution of an overdetermined problem has a long history, starting from the semilinear case 
	\begin{equation}\label{eq_overdet_lapla}
	\left\{ \begin{array}{l}
	\Delta u + f(u) = 0 \qquad \text{on } \, \Omega \\[0.2cm]
	u > 0 \qquad \text{on } \, \Omega, \\[0.2cm]
	u = 0, \ \ \partial_{\bar \eta} u = \mathrm{const} \qquad \text{on } \, \partial \Omega
	\end{array}\right.
	\end{equation}
with $f \in \lip_\loc(\R)$. Connected, open sets $\Omega$ with smooth boundary supporting a \emph{bounded}, non-constant solution of \eqref{eq_overdet_lapla} are called $f$-extremal domains. While J.Serrin in \cite{serrin_movingplane} showed that the only bounded $f$-extremal domain in $\R^m$ is the round ball (cf. also \cite{weinberger} for $f \equiv 1$), in the past 25 years a major open problem in the field was to characterize unbounded $f$-extremal domains. In \cite{bcn_1}, H.Berestycki, L.Caffarelli and L.Nirenberg conjectured that the only $f$-extremal domains in $\R^m$ with connected complement are either the ball, the half-space, the cylinder $B^k \times \R^{m-k}$ or their complements. The conjecture turns out to be false in full generality, by counterexamples in \cite{sicbaldi} ($m \ge 3$) and \cite{ros_ruiz_sic_2} ($m = 2$ and $\Omega$ an exterior region). Surprisingly, in $\R^2$ the conjecture is true if $\partial \Omega$ is unbounded and connected \cite{ros_ruiz_sic_1}. \par
	It is natural to wonder whether $f$-extremal domains can be classified in more general Riemannian manifolds. The BCN conjecture was considered in the hyperbolic plane $\mathbb{H}^2$ and in the sphere $\Sph^2$, respectively in \cite{espi_far_maz} and \cite{espi_maz}, with different and interesting techniques. From a perspective more closely related to our work, the classification of solutions of \eqref{eq_overdet_lapla} on manifolds with $\Ricc \ge 0$ was studied in \cite{fmv}. Rigidity, in this case, means both that the domain splits as a product $N \times [0, \infty)$, and that the solution only depends on the split half-line. \par 
	To the best of our knowledge, generalizations of \eqref{eq_overdet_lapla} to nonlinear operators have mainly focused on the $p$-Laplace equation $\Delta_p u + f(u) = 0$, with 
	\[
	\Delta_p u = \diver \left( |D u|^{p-2}Du\right) \qquad \text{with } \, p > 1,
	\]
leaving problems like \eqref{overdet} mostly unexplored. Taking into account that \eqref{eq_overdet} alone imposes restrictions on the geometry of $\Omega$ even without overdetermined boundary conditions (cf. \cite{lopez,collin_krust,imperapigolasetti}, and the survey in \cite{jfwang}), we could expect more rigidity than in the case of the Laplacian. However, as far as we know there is still no attempt to study the equivalent of the BCN conjecture for \eqref{overdet}. In the present  paper, we move some steps in this direction for constant $f$.

\subsubsection*{Assumptions and main results}
Our first requirement  
	\begin{equation}\label{infu}
	\inf_\Omega u > -\infty, 
	\end{equation}
instead of the stronger $u \in L^\infty(\Omega)$, is made necessary to include the relevant standard example of the half-hyperplane. We tried to keep our curvature requirement on $M$ to a minimum, in particular, avoiding to bound the sectional curvature of $M$. This is coherent with the above mentioned splitting theorems in \cite{cheeger_gromoll,liwang2,liwang,fmv}, all based on B\"ochner formulas and thus naturally related to Ricci lower bounds. Precisely, we assume
	\begin{equation}\label{eq_curvat}
	\left\{ \begin{array}{ll}
	\Ricc \ge - \kappa(1 + r^2) & \quad \text{on $M$, for some $\kappa \in \R^+$,} \\[0.2cm]
	\Ricc \ge 0 & \quad \text{on } \, \Omega, 
	\end{array}\right.
	\end{equation}
where $r$ is the distance from a fixed origin, and the inequalities are meant in the sense of quadratic forms. The first condition is technical, and might be removable. On the other hand, condition $\Ricc \ge 0$ on $\Omega$ is fundamental at various stages of the proof. We anticipate that it would be very interesting to obtain a splitting theorem in the spirit of Theorem \ref{teo_splitting} below on manifolds satisfying
	\[
	\Ricc \ge - (m-1) \kappa^2, 
	\]
with $\kappa>0$ constant. To this aim, the techniques developed in Li-Wang's \cite{liwang2,liwang} should be quite helpful.\par
	In the generality of \eqref{infu}, \eqref{eq_curvat}, a major issue is to obtain a gradient bound 
	\[
	\sup_\Omega |Du| < \infty.
	\]
The main source of difficulty is that the linearization of the mean curvature operator possesses two eigenvalues that behave quite differently as $|Du| \ra \infty$, while an assumption on $\Ricc$ just controls, via comparison theory, the full trace of the Hessian of the distance function $r$. Therefore, $r$ cannot be used in Korevaar's localization method, customarily exploited to get gradient estimates for the mean curvature equation under sectional curvature bounds (cf. \cite{cmmr,bcmmpr,rosenbergschulzespruck,spruck}, and the references therein). We shall overcome this problem by using, in place of $r$, exhaustion functions coming from potential theory, in particular, we use a duality principle recently discovered in \cite{marivaltorta,maripessoa,maripessoa_2}, called the AK-duality (Ahlfors-Khasminskii duality). We comment on this point later, and we suggest to consult \cite[Sec. 3]{bcmmpr} and \cite{cmmr} for more details. The global gradient estimate in Theorem \ref{intro-prop-bound} is the second main achievement of our work, inspired by one for minimal graphs in $\R \times M$ that we recently obtained in \cite{cmmr} to prove Bernstein and half-space properties on manifolds with Ricci lower bounds, see Section \ref{sec:grad-bound} below. \par
	Leaving aside the characterization of cylinders and complements of balls in $\R^m$, that relate to the position vector field, an important step to show the uniqueness of the half-space among a large class of non-compact, theoretically $f$-extremal domains $\Omega \subset \R^m$, is to prove that $u$ is monotone in one variable on $\Omega$, namely that $\partial_m u > 0$. This is generally a hard task, and heavily depends on the behaviour of  $f$ and on $\Omega$. For instance, $\partial_m u > 0$ follows if $f$ is the derivative of a bistable nonlinearity, like in the Allen-Cahn equation 
	\[
	\Delta u + u - u^3 = 0,
	\]
and if $\Omega$ is a globally Lipschitz epigraph in the $x_m$-direction, see \cite[Thm. 1.1]{bcn_1}. Note that, since $\partial_m u$ satisfies the linearized equation $\Delta w + f'(u)w = 0$, its positivity implies that $u$ is a stable solution. The existence of the parallel field $\partial_m$ in $\R^m$ is also automatic in the setting of Theorem \ref{thm-epigraph} above, but on more general manifolds will be weakened to the assumption that $\Omega$ supports a bounded Killing field $X$. Next, a necessary condition to obtain the monotonicity $(Du,X)>0$ is given by some form of trasversality of $X$ to $\partial \Omega$, that is coherent with the sign of $\partial_{\bar \eta} u$ on each connected component of $\partial \Omega$. This last requirement, codified by \eqref{ipo_X2} below, is satisfied for instance if $\Omega$ is a locally Lipschitz epigraph. For constant $f$, the method in \cite{bcn_1} to prove $\partial_m u > 0$ cannot be applied, so we need to devise a different strategy, of independent interest. To reach the goal, we shall require that $(\overline{\Omega},\sigma)$ is a parabolic manifold with boundary, according to the following

\begin{definition}
Let $N$ be a smooth manifold with, possibly, non-empty $C^1$-boundary. Then, $N$ is said to be parabolic if the capacity of every compact set $K \subset N$, defined as  
	$$
	\capac(K) : = \inf\left\{ \int_N |D\phi|^2 \di x : \phi\in \lip_c(N), \, \phi \geq 1 \, \text{ on } \, K \right\},
	$$
vanishes.  
\end{definition}

If $\partial N \neq \emptyset$, note that the support of $\phi$ can intersect $\partial N$. It can be shown that $N$ is parabolic if and only if the Brownian motion on $N$, normally reflected on $\partial N$ if $\partial N \neq \emptyset$, is recurrent, cf. \cite{grigoryan}. 

\begin{example}
\emph{From the very definitions, if $M$ is a parabolic manifold without boundary, cf. \cite{grigoryan}, then every open subset $\Omega \subset M$ with $C^1$-boundary has the property that $\overline{\Omega}$ is parabolic (hereafter, we say that $\Omega$ \emph{has a  parabolic closure}). Notice also that, in dimension $2$, the parabolicity of $\overline\Omega$ is invariant by conformal deformations of the metric. By \cite{imperapigolasetti}, a sufficient condition for the parabolicity of $\overline{\Omega}$ is the validity of
$$
	\int^{\infty} \frac{\di s}{|\Omega \cap \partial B_s|} = \infty
$$
for balls $B_s$ centered at some fixed origin $o\in M$ (they assume a smooth boundary, but $C^1$ regularity is sufficient for their argument). An application of H\"older inequality (cf. \cite[Prop. 1.3]{rigolisetti}), shows that the condition is implied by 
$$
	\int^{\infty} \frac{s \di s}{|\Omega \cap B_s|} = \infty,
$$
that is satisfied, for instance, if the volume of geodesic balls in $\Omega$ grow at most quadratically. Examples include any smooth domain in the Euclidean plane $\R^2$, as well as any smooth domain contained between two parallel planes in $\R^3$. Also, by Bishop-Gromov comparison theorem, any smooth domain in a surface with non-negative sectional curvature has a parabolic closure.  
}
\end{example}

We are ready to state our main result:

\begin{theorem}\label{teo_splitting}
	Let $(M^m, \sigma)$ be a complete Riemannian manifold of dimension $m \ge 2$, and let $\Omega \subseteq M$ be a connected open set with smooth boundary such that $\overline\Omega$ is parabolic. Assume that 
	\[
	\left\{ \begin{array}{ll}
	\Ricc \ge - \kappa(1 + r^2) & \quad \text{on } \, M, \\[0.2cm]
	\Ricc \ge 0 & \quad \text{on } \, \Omega, 
	\end{array}\right.
	\]
for some constant $\kappa>0$, where $r$ is the distance from a fixed origin. Split $\partial \Omega$ into its connected components $\{\partial_j\Omega\}$, $1 \le j \le j_0$, possibly with $j_0 = \infty$. Let $u \in C^2(\overline\Omega)$ be a non-constant solution of the capillarity problem
	\begin{equation} \label{capill-eq}
		\begin{cases}
			\diver \left( \dfrac{Du}{\sqrt{1+|Du|^2}} \right) = H & \qquad \text{on } \, \Omega, \\
			u = b_j, \ \partial_{\bar\eta} u = c_j & \qquad \text{on } \, \partial_j \Omega, \; 1 \leq j \leq j_0, \\
			\inf_\Omega u > -\infty,
		\end{cases}
	\end{equation}
where $\bar \eta$ is the unit exterior normal to $\partial \Omega \hookrightarrow \Omega$, $H, c_j, b_j \in \R$, $\{c_j\}$ is a bounded sequence if $j_0 = \infty$, and with the agreement $b_1 \le b_2 \le \ldots \le b_{j_0}$. Assume that either
	\begin{equation} \label{intro-dOm-hp}
		u \, \text{ is constant on } \, \partial\Omega \qquad \text{or} \qquad \liminf_{r\to \infty} \frac{\log|\partial\Omega \cap B_r|}{r^2} < \infty.
	\end{equation}
If there exists a Killing vector field $X$ on $\overline\Omega$ with the following properties:
	\begin{equation}\label{ipo_X1}	
		\sup_\Omega |X| < \infty,
	\end{equation}
	\begin{equation}\label{ipo_X2}
	    \left\{ \begin{array}{l}
		c_j (X, \bar \eta) \ge 0 \quad \text{on $\partial_j \Omega$, for every $j$,} \\[0.2cm]
		c_j (X, \bar \eta) \not \equiv 0 \quad \text{on $\partial_j \Omega$, for some $j$}
	\end{array}\right.
	\end{equation}
then: 
	\begin{itemize}
		\item[(i)] $\Omega = (0, T) \times N$ with the product metric, for some $T \le \infty$ and some complete, boundaryless, parabolic $N$ with $\Ricc_N \ge 0$,
		\item[(ii)] the product $(X, \partial_t)$ is a positive constant, and $(Du,X) > 0$ on $\Omega$,
		\item[(iii)] $c_1 \le 0$ ($c_1 < 0$ if $H \le 0$) and, denoting with $\partial_1 \Omega = \{0\} \times N$, $u(t,x) = b_1 + u_{c_1,H}(t)$ is a (translated) standard example, with $u_{c_1,H}(t)$ as in \eqref{eq_split}.
	\end{itemize}
\end{theorem}

\begin{remark}
	\emph{The splitting of $\Omega$ as in the conclusion of Theorem \ref{teo_splitting} forces $\partial\Omega$ to consist of one or two copies of a connected, complete manifold of non-negative Ricci curvature. Hence, the second condition in \eqref{intro-dOm-hp} is satisfied a posteriori. In fact, by Bishop's theorem we have $|\partial\Omega \cap B_r(o)| \leq C_0 r^{m-1}$ for some $C_0 > 0$.
	}
\end{remark}


\begin{remark}
\emph{Condition \eqref{ipo_X2} is a mild transversality assumption, that can be rephrased as $(Du,X) \ge 0$ and $\not \equiv 0$ on $\partial \Omega$. Clearly, it is satisfied a posteriori if $\Omega$ splits as indicated in the above theorem. 
}
\end{remark}

\begin{remark}
	\emph{Theorem \ref{teo_splitting} is also related to the celebrated Schiffer's conjecture. The latter asks whether a domain $\Omega$ supporting a non-constant solution of
		\[
		\left\{ \begin{array}{l}
		\Delta u + \lambda u = 0 \qquad \text{on } \, \Omega, \\[0.2cm]
		u = b, \ \ \  \partial_{\bar \eta} u = c \qquad \text{on } \, \partial \Omega,
		\end{array}\right.
		\]
		for some constants $\lambda, b,c \in \R$, is necessarily a ball. The problem in unbounded domains has been considered in \cite{far_vald_ARMA}.
	}
\end{remark}

The proof of Theorem \ref{teo_splitting} hinges on two main results, of independent interest. The first is a geometric Poincar\'e inequality. To state it, let us fix some notation. For $u \in C^2(\Omega)$, we denote by $(\Sigma,g)$ its graph over $\Omega$
$$
	\Sigma = \{ (u(x),x) : x \in \Omega \} \subseteq \R\times M
$$
endowed with the metric $g$ induced from the ambient product metric $\di y^2 + \sigma$ on $\R \times M$, with $y$ the canonical coordinate on $\R$. We let $\nabla$, $\|\;\|$, $\di x_g$ be the Levi-Civita connection, vector norm and volume measure induced by $g$. For any point $x\in\Sigma$ where $\di u \neq 0$, the level set $\{u=u(x)\}$ is a regular embedded hypersurface of $\Sigma$ in a suitable neighbourhood of $x$. We let $A$ be its second fundamental form in $(\Sigma,g)$, and for any $v : \Sigma \to \R$ we let
$$
	\nabla_\top v := \nabla v - \left\langle \nabla v,\frac{\nabla u}{\|\nabla u\|} \right\rangle \frac{\nabla u}{\|\nabla u\|}
$$
be the component of $\nabla v$ tangent to $\{u=u(x)\}$. Then, along $\{u=u(x)\}$ the remainder in the classical Kato inequality is made explicit by the following identity from \cite{stezum}:
	\begin{equation}\label{eq_stezum}
	\|\nabla^2 u\|^2 - \|\nabla\|\nabla u\|\|^2 = \|\nabla_\top\|\nabla u\|\|^2 + \|\nabla u\|^2 \|A\|^2.
	\end{equation}
Note that $\|\nabla u\|$ is $C^1$ in the set $\{\di u \neq 0\}$.

\begin{proposition}
	Let $(M,\sigma)$ be a complete Riemannian manifold, $\Omega\subseteq M$ a smooth, connected open subset and $u\in C^2(\overline\Omega)$ satisfy \eqref{overdet} for some constant $H\in\R$. Assume that $u$ is strictly monotone in the direction of a Killing field $X$ on $\overline\Omega$, that is, $\bar v : = (Du,X)>0$ on $\Omega$. Then, for every $\varphi\in\lip_c(\overline\Omega)$
	\begin{equation}\label{intro-poincare}
	\begin{array}{l}
		\disp \int_\Sigma \left[ W^2 \left( \|\nabla_\top \|\nabla u\|\|^2 + \|\nabla u\|^2 \|A\|^2 \right) + \frac{\Ricc(Du,Du)}{W^2} \right] \varphi^2 \di x_g + \\[0.5cm]
		\qquad \qquad \disp + \int_\Sigma \frac{\bar v^2}{W^2} \left\| \nabla \left( \frac{\varphi\|\nabla u\|W}{\bar v}\right) \right\|^2\di x_g \le \int_\Sigma \|\nabla u\|^2 \|\nabla \varphi \|^2 \di x_g,
	\end{array}
	\end{equation}
	where $W = \sqrt{1+|Du|^2} \equiv 1/\sqrt{1-\|\nabla u\|^2}$.
\end{proposition}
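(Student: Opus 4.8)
The plan is to run a refined stability (Picone-type) argument built on the Jacobi field produced by the Killing symmetry. Lift $X$ to the vector field $\tilde X$ on $\R\times\overline\Omega$ that does not depend on the coordinate $y$ of $\R$; since the ambient metric $\di y^2+\sigma$ is a product, $\tilde X$ and $\partial_y$ are both Killing fields of $\R\times M$. By \eqref{overdet} the graph $\Sigma$ has constant mean curvature, and the normal component of an ambient Killing field along a CMC hypersurface is a Jacobi field; hence, writing $\nu$ for the unit normal of $\Sigma$ and $A_\Sigma$ for its second fundamental form in $\R\times M$, the functions $1/W=\pm\langle\partial_y,\nu\rangle$ and $f:=\bar v/W=\pm\langle\tilde X,\nu\rangle$ both solve
\[
\Delta_\Sigma\psi + q\,\psi = 0 \quad\text{on }\Sigma, \qquad q := \|A_\Sigma\|^2 + \Ricc_{\R\times M}(\nu,\nu) = \|A_\Sigma\|^2 + \frac{\Ricc(Du,Du)}{W^2},
\]
the flat $\R$-factor contributing nothing to $\Ricc_{\R\times M}(\nu,\nu)$. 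The hypothesis $\bar v>0$ on $\Omega$ gives $Du\neq0$ and $f>0$ on $\Omega$, so $\|\nabla u\|=|Du|/W\in(0,1)$ there and $\|\nabla u\|$ is $C^1$ on $\Omega$.

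Next I would establish the pointwise bridge between the ``geometric'' integrand of \eqref{intro-poincare} and the ``analytic'' quantities $q$ and $\Delta_\Sigma\|\nabla u\|$. From the Jacobi equation $\Delta_\Sigma(1/W)=-q/W$ and the chain rule applied to $\|\nabla u\|=\sqrt{1-W^{-2}}$ one gets $\|\nabla u\|\,\Delta_\Sigma\|\nabla u\| = q/W^2 - W^2\|\nabla\|\nabla u\|\|^2$. Separately, a computation in graph coordinates gives $\hess^g(u|_\Sigma)=W^{-1}A_\Sigma$ as $(0,2)$-tensors, so $W^2\|\hess^g(u|_\Sigma)\|^2=\|A_\Sigma\|^2$; inserting this into \eqref{eq_stezum} (applicable since $\di u$ never vanishes on $\Omega$) rewrites the first integrand of \eqref{intro-poincare} as $\|A_\Sigma\|^2 - W^2\|\nabla\|\nabla u\|\|^2 + W^{-2}\Ricc(Du,Du)$. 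Combining these with the elementary identity $\|\nabla u\|^2 + W^{-2}=1$, one verifies the pointwise relation
\[
\|A_\Sigma\|^2 - W^2\|\nabla\|\nabla u\|\|^2 + \frac{\Ricc(Du,Du)}{W^2} \;=\; q\,\|\nabla u\|^2 \,+\, \|\nabla u\|\,\Delta_\Sigma\|\nabla u\|.
\]

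With this in hand I would test. Put $\eta:=\varphi\,\|\nabla u\|\in\lip_c(\overline\Sigma)$, and note that $\eta/f=\varphi\|\nabla u\|W/\bar v$, so $\int_\Sigma f^2\|\nabla(\eta/f)\|^2$ is exactly the second integral of \eqref{intro-poincare}. Multiplying $\Delta_\Sigma f=-qf$ by $\eta^2/(f+\delta)$, integrating over $\Sigma$, using Green's identity and letting $\delta\downarrow0$ (with Fatou's lemma on the quadratic term, which yields ``$\le$'' rather than equality) gives
\[
\int_\Sigma q\,\eta^2 + \int_\Sigma f^2\Big\|\nabla\tfrac{\eta}{f}\Big\|^2 \;\le\; \int_\Sigma\|\nabla\eta\|^2 \,-\, \int_{\partial\Sigma}\frac{\eta^2}{f}\,\partial_\mu f,
\]
$\mu$ being the outward conormal of $\partial\Sigma$ in $\Sigma$. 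Expanding $\|\nabla\eta\|^2$ and integrating by parts the cross term $\langle\nabla\varphi^2,\nabla\|\nabla u\|^2\rangle$ turns $\int_\Sigma\|\nabla\eta\|^2$ into $\int_\Sigma\|\nabla u\|^2\|\nabla\varphi\|^2 - \int_\Sigma\varphi^2\|\nabla u\|\Delta_\Sigma\|\nabla u\| + \int_{\partial\Sigma}\tfrac12\varphi^2\,\partial_\mu\|\nabla u\|^2$. Feeding in the bridge identity (multiplied by $\varphi^2$, so that $q\varphi^2\|\nabla u\|^2=q\eta^2$) to replace the geometric integrand, the two interior terms $\int_\Sigma\varphi^2\|\nabla u\|\Delta_\Sigma\|\nabla u\|$ cancel, and one is left with precisely \eqref{intro-poincare} plus the boundary term $\int_{\partial\Sigma}\big(\tfrac12\varphi^2\,\partial_\mu\|\nabla u\|^2 - \tfrac{\eta^2}{f}\,\partial_\mu f\big)$.

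The crux --- and the step I expect to be the main obstacle --- is to show this boundary term is $\le0$; in fact it vanishes identically, and this is the only place where the Neumann part of the overdetermined condition and the Killing hypothesis come into play. On a component $\partial_j\Omega$ the data $u\equiv b_j$, $\partial_{\bar\eta}u\equiv c_j$ force $Du=c_j\bar\eta$, hence $W\equiv\sqrt{1+c_j^2}$ is constant, $\hess^\sigma u(\bar\eta,\cdot)$ vanishes in tangential directions, and $\hess^\sigma u|_{T\partial_j\Omega}$ equals $c_j$ times the second fundamental form of $\partial_j\Omega$; plugging into the CMC equation determines $\rho:=\hess^\sigma u(\bar\eta,\bar\eta)$ in terms of $H$, $c_j$ and the mean curvature of $\partial_j\Omega$. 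A short computation --- using in addition that $X$ is Killing, whence $\langle Du,\nabla_{\bar\eta}X\rangle=c_j\langle\bar\eta,\nabla_{\bar\eta}X\rangle=0$, and that the conormal $\mu$ corresponds to $\bar\eta/W$ under the graph map --- then gives, on each $\partial_j\Omega$,
\[
\tfrac12\varphi^2\,\partial_\mu\|\nabla u\|^2 \;=\; \frac{\varphi^2 c_j\,\rho}{W^5} \;=\; \frac{\eta^2}{f}\,\partial_\mu f,
\]
so the boundary integrand is zero. A last technical point is the legitimacy of the integration by parts where $f=\bar v/W$ degenerates on $\partial\Omega$ (on components with $c_j=0$, or at points with $\langle X,\bar\eta\rangle=0$): there the products $\tfrac{\eta^2}{f}\,\partial_\mu f$ stay bounded, and the $\delta$-regularization together with Fatou's lemma handles the passage to the limit and accounts for the stated inequality in \eqref{intro-poincare}.
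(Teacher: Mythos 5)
Your proof is correct and follows the same conceptual route as the paper's: both arguments rest on the Jacobi equations for the angle functions, a Picone-type integration by parts, a cancellation of boundary terms forced by the overdetermined data (your identity $\tfrac12\varphi^2\,\partial_\mu\|\nabla u\|^2 = \tfrac{\eta^2}{f}\partial_\mu f$ is, after unwinding $f=\bar v/W$ and $\eta=\varphi\|\nabla u\|$, exactly Lemma~\ref{lem_ideboundary}), the Sternberg--Zumbrun remainder \eqref{eq_stezum}, and a regularization--Fatou step to handle degeneracies. The only organizational difference is the bookkeeping: the paper works with the weighted operator $\LL_W = W^2\diver_g(W^{-2}\nabla\,\cdot\,)$, the solutions $W$, $\bar v$ of $\LL_W W = qW$, $\LL_W\bar v = 0$, and the Picone test function $\varphi W\|\nabla u\|$; you work in unweighted form with $\Delta_\Sigma$ and the Jacobi functions $1/W$, $f=\bar v/W$, testing with $\eta=\varphi\|\nabla u\|$. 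These are algebraically equivalent (conjugating $\LL_W$ by $W$ converts one set of equations to the other), so the integrands one ends up with are the same, and I find your ``bridge'' identity checks out after using $\nabla W = W^3\|\nabla u\|\,\nabla\|\nabla u\|$ and $\|\nabla u\|^2 + W^{-2}=1$. The one place you should expand is the closing technical sentence: the assertion that $\tfrac{\eta^2}{f}\partial_\mu f$ stays bounded near degenerate boundary points is not automatic. The paper establishes the analogous fact (bound \eqref{bound_boundary}) by combining the boundary cancellation identity with the Hopf boundary point lemma applied to $\bar v$ (which guarantees $\nabla\bar v\neq 0$ where $\bar v$ vanishes on $\partial\Omega$, giving $\bar v$ a linear lower bound into the interior); you need the same ingredient to produce the dominating function for your $\delta\to 0$ boundary limit. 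Once that is spelled out, your argument is complete.
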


\begin{remark}
\emph{The key point is that the support of the function $\varphi$ is allowed to meet $\partial\Omega$. Indeed, the overdetermined conditions force the boundary terms to cancel out. 
}
\end{remark}

Geometric Poincar\'e formulas of the type in \eqref{intro-poincare} are not new for boundaryless manifolds. For instance, R.Schoen and S.T.Yau \cite{schoenyau} and later P.Li and J.Wang \cite{liwang_mini} used an inequality similar to \eqref{intro-poincare} to prove the rigidity of certain minimal hypersurfaces properly immersed into manifolds with non-negative sectional curvature. Related inequalities also appeared in \cite{liwang2,liwang}, to split manifolds with $\Ricc \ge -(m-1)\kappa^2$ and whose Laplacian has a sufficiently large first eigenvalue.\par 
From a different point of view, formulas like \eqref{intro-poincare} were independently introduced by A.Farina in his habilitation thesis \cite{far_HDR} and by Farina, B.Sciunzi and E.Valdinoci in \cite{fsv}, to study Gibbons and De Giorgi type conjectures for stable solutions of quasilinear equations of the type
	\[
	\diver \left( \frac{\varphi(|Du|)}{|Du|}Du \right) + f(u) = 0 \qquad \text{on } \, \R^m,
	\]
with rather general $\varphi$ (cf. \cite[Thm. 2.5]{fsv}). Their goal is to prove the 1D-symmetry of $u$, that is, that $u$ only depends on one variable up to rotation. For the mean curvature operator $\varphi(t) = t/\sqrt{1+t^2}$, the inequality reads
	\begin{equation}\label{eq_farina}
	\int_{\R^m} \left[ \frac{|D_\top|Du||^2}{(1+|Du|^2)^{3/2}} + \frac{|A|^2|Du|^2}{\sqrt{1+|Du|^2}} \right]\varphi^2 \le \int_{\R^m} \frac{|Du|^2|D\varphi|^2}{\sqrt{1+|Du|^2}} \qquad \forall \, \varphi \in \lip_c(\R^m),
	\end{equation}
where, at a point $x$ such that $Du(x) \neq 0$, $D_\top$ and $A$ are the gradient and second fundamental form of the level set $\{u = u(x)\}$ in $\R^m$. We refer the reader, in particular, to Theorems 1.1, 1.2 and 1.4 in \cite{fsv}. 

Regarding overdetermined boundary value problems, Farina and Valdinoci in \cite{far_vald_ARMA} made the remarkable discovery that the identity corresponding to \eqref{eq_farina} for $\Delta u + f(u) = 0$, meaning 
	\begin{equation}\label{eq_farina_2}
	\int_{\Omega} \left[ |D_\top|Du||^2 + |A|^2|Du|^2 \right]\varphi^2 \le \int_{\Omega} |Du|^2|D\varphi|^2,
	\end{equation}
still holds even if the support of $\varphi$ contains a portion of $\partial\Omega$, provided that $u$ satisfies \eqref{eq_overdet_lapla} and $\partial_m u > 0$. In other words, the contributions of boundary terms to \eqref{eq_farina_2} on $\partial \Omega \cap {\rm spt } \varphi$ vanish identically if both $u$ and $\partial_{\bar \eta} u$ are constant on $\partial \Omega$. This opened the way to use \eqref{eq_farina_2} to characterize domains supporting a non-constant solution of \eqref{eq_overdet_lapla}, a point of view further developed in \cite{fmv} to obtain splitting theorems for domains with $\Ricc \ge 0$ provided that the solution of \eqref{eq_overdet_lapla} is monotone in the direction of a Killing field. In the nonlocal setting, Y.Sire and Valdinoci in \cite{sireval} used a similar technique on product domains $M\times[0,\infty)$, where $M$ is a complete manifold without boundary and with $\Ricc\geq 0$, to prove rigidity properties for solutions of Neumann problems involving a weighted (local) Laplacian, that are related to nonlocal equations for the fractional Laplacian on $M$ via the well-known extension argument. In our paper, a key point to get \eqref{intro-poincare} is to show that the ``magic cancelation" in \cite{far_vald_ARMA} still holds for capillary graphs. We give a simpler proof of this identity, emphasizing its geometrical meaning.\\
\par
The second result is a gradient estimate for non-negative solutions of the CMC equation, holding under just a lower bound on the Ricci curvature of $(M,\sigma)$ in $\Omega$ and, in some cases, a mild control on the volume growth of $\partial\Omega$. This improves on \cite{cmmr}, that considers the case $H=0$.

\begin{theorem} \label{intro-prop-bound}
	Let $(M,\sigma)$ be a complete Riemannian manifold of dimension $m\geq 2$, and let $\Omega \subset M$ be an open subset. Assume that
	\begin{equation}\label{eq_Ricci}
		\left\{ \begin{array}{ll}
		\Ricc \ge - K(1+r^2) & \quad \text{on } \, M, \\[0.2cm]
		\Ricc \geq -(m-1)\kappa^2 & \quad \text{on } \, \Omega,
		\end{array}\right.
	\end{equation}
for some constants $K>0$, $\kappa\ge0$, and where $r$ is the distance from a fixed origin $o \in M$. Let $u\in C^2(\Omega)$ satisfy
	$$
		\diver \left( \dfrac{Du}{\sqrt{1+|Du|^2}} \right) = H, \qquad u \ge 0 \qquad \text{on } \, \Omega
	$$
	for some constant $H\in\R$.
	%
	%
	Assume that at least one of the following conditions is satisfied
	\begin{itemize}
		\item [-] $\Omega = M$
		\item [-] $u\in C(\overline{\Omega})$ and $u|_{\partial\Omega}$ is constant
		\item [-] $\Omega$ has locally Lipschitz boundary and
		$$
			\liminf_{r\to \infty} \frac{\log|\partial\Omega \cap B_r(o)|}{r^2} < \infty.
		$$
	\end{itemize}
	Let $C\geq 0$ be such that
	\begin{equation}\label{eq_CH}
	\begin{array}{ll}
	\disp C^2 \ge (m-1)\kappa^2 - \frac{H^2}{m} & \qquad \text{if } \, H \leq 0, \\[0.4cm]
	\disp C^2 > (m-1)\kappa^2 - \frac{H^2}{m} & \qquad \text{if } \, H > 0, 
	\end{array}
	\end{equation}
and choose $A \ge 1$ to satisfy 	
	\[
	\frac{H^2}{m} - \frac{CH}{t} + \left( C^2 - (m-1)\kappa^2 \right) \frac{t^2-1}{t^2} \geq 0 \qquad \text{for every } \, t \geq A.
	\]	
	Then
	\begin{equation} \label{intro-bound}
		\sup_\Omega \frac{\sqrt{1+|Du|^2}}{e^{Cu}} \leq \max \left\{ A, \limsup_{x\to\partial\Omega} \frac{\sqrt{1+|Du(x)|^2}}{e^{Cu(x)}} \right\},
	\end{equation}
	and in case $\Omega = M$ this yields
	$$
		\sqrt{1+|Du|^2} \leq A e^{Cu} \qquad \text{in } \, M.
	$$
\end{theorem}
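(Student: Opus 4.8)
The plan is to prove the gradient estimate \eqref{intro-bound} via a maximum-principle argument for the auxiliary function $w = W e^{-Cu}$ on $\overline\Omega$, where $W = \sqrt{1+|Du|^2}$, combined with a Bochner-type computation for $W$ along the CMC graph. First I would recall the standard identity for CMC graphs: writing the equation as $\diver(Du/W) = H$, the quantity $W = 1/\sqrt{1-\|\nabla u\|^2}$ (viewed on the graph $\Sigma$) satisfies, via the Gauss equation and the traced Bochner formula on $\Sigma$, an elliptic inequality of the form $L_g W \geq \big(\tfrac{H^2}{m} + \|A\|^2 - \text{something}\big)W + \Ricc(\nu,\nu)$-type terms, where the Ricci term on $\Omega$ is bounded below by $-(m-1)\kappa^2$ using the second hypothesis in \eqref{eq_Ricci}. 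The precise linear operator $L_g$ here is the one naturally associated to the linearized CMC operator, i.e. roughly $\Delta_\Sigma$ plus a drift; the key is that $\|A\|^2 \geq H^2/m$ only controls part of the curvature, so one keeps track of the exact coefficients.

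Next I would compute, for the test function $w = We^{-Cu}$, the elliptic inequality it inherits. At an interior maximum point $x_0$ of $w$ with $\|\nabla u\|(x_0)\neq 0$ one has $\nabla w = 0$, hence $\nabla W = CW\nabla u$, and plugging this into $L_g w \leq 0$ produces, after discarding the manifestly nonnegative gradient-squared and $\|A\|^2$ terms (keeping $\|A\|^2 \geq H^2/m$), a pointwise scalar inequality in the single variable $t = W(x_0)$ of exactly the form
\[
\frac{H^2}{m} - \frac{CH}{t} + \big(C^2-(m-1)\kappa^2\big)\frac{t^2-1}{t^2} < 0,
\]
(with $\leq$ in the borderline case $H\leq0$). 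By the choice of $A$, this forces $t < A$, i.e. $W(x_0) < A e^{Cu(x_0)}$ is impossible to violate — so any interior maximum of $w$ is either at a point where $\nabla u = 0$ (there $w = e^{-Cu}\leq 1 \leq A$ since $u\geq0$... more precisely one checks $w\leq A$ directly there) or gives $w(x_0)\leq A$. In either case $\sup_\Omega w \leq \max\{A, \limsup_{x\to\partial\Omega} w(x)\}$, which is \eqref{intro-bound}.

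The main obstacle is that $w$ need not attain an interior maximum when $\Omega$ is non-compact: one must produce the maximum, or rule out escape to infinity, using the Ricci lower bound $\Ricc \geq -K(1+r^2)$ on all of $M$ together with one of the three listed alternatives. Here I would invoke an Omori–Yau / Ekeland-type maximum principle adapted to the graph $\Sigma$ — noting that under $\Ricc\geq -K(1+r^2)$ the manifold $M$ (and hence, after controlling the conformal factor $W$, the graph $\Sigma$) is stochastically complete / satisfies a weak maximum principle at infinity. In the case $\Omega = M$ this is immediate; in the case of constant boundary data one reflects or uses a boundary version of the weak maximum principle (the constancy of $u|_{\partial\Omega}$ makes $w$ have a controlled boundary behaviour, and the Hopf lemma rules out boundary maxima increasing $w$); in the case of the volume-growth bound on $\partial\Omega$ one integrates the Bochner inequality against suitable cutoffs, using the Gaffney–Karp / Sturm type volume test — this is precisely where $\liminf_{r}\log|\partial\Omega\cap B_r|/r^2 <\infty$ enters, matching the $(1+r^2)$ growth allowed in the Ricci bound. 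This last analytic step, reconciling the $r^2$-Ricci decay with the perimeter growth of $\partial\Omega$ to get a global weak maximum principle on $\overline\Omega$, is the delicate part; the purely algebraic verification that the chosen $A$ kills the scalar inequality is routine, and I would defer it to a short computation. Finally, the special form $W \leq Ae^{Cu}$ in the case $\Omega = M$ follows because then the boundary term in \eqref{intro-bound} is absent.
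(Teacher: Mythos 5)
You have the right skeleton — the auxiliary function $z=We^{-Cu}$, the differential inequality it satisfies (the paper derives precisely $\LL_W z \geq \big(\tfrac{H^2}{m}-\tfrac{CH}{W}+(C^2-(m-1)\kappa^2)\tfrac{W^2-1}{W^2}\big)z$ from \eqref{LLWz1}), and the observation that the hypothesis on $A$ forces the bracket to be nonnegative once $W\geq A$ — but the paragraph where you say ``invoke an Omori--Yau / Ekeland-type maximum principle'' is exactly where the real content lies, and the mechanism you sketch there does not work as stated. The weak maximum principle at infinity (equivalently, stochastic completeness) applies only to functions bounded from above, and $z$ is \emph{not} a priori bounded; indeed $\sup_\Omega z<\infty$ is precisely what one is trying to establish. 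So you cannot simply produce a maximizing sequence for $z$ and pass to the limit. Similarly, ``integrating the Bochner inequality against cutoffs'' (a parabolicity-style argument) would also require a priori integrability or boundedness of $z$ to absorb the boundary terms. There is a circularity that the proposal does not break.

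The paper breaks it with two devices you do not mention and that carry most of the technical weight. First, the relevant stochastic completeness is that of the graph $(\Sigma,g)$, not of $M$, and it is not obtained from the Ricci bound on $\Sigma$ (which is unavailable) nor by treating $g$ as a conformal change of $\sigma$ (it is not conformal). Instead it is deduced from a volume-growth estimate on $\Sigma$ via a calibration argument (Proposition~\ref{prop-stoch}, drawn from~\cite{cmmr}), and this is where the three alternative structural conditions on $\Omega$ and $\partial\Omega$ enter. Second, and more importantly, the paper does not apply the weak maximum principle to $z$ directly: it runs a Korevaar-type localization, but replaces the distance function --- which cannot be used here because Ricci bounds control only $\Delta r$, not the individual Hessian eigenvalues that the linearized mean-curvature operator sees --- by a Khasminskii-type exhaustion $\psi_0$ with $\Delta_g\psi_0\le\lambda\psi_0$, produced via AK-duality (Proposition~\ref{prop-Kh}). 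Concretely, the argument is by contradiction: if the bound fails, one slightly perturbs the constants ($C\to C_1>C$, $A\to A_1>A$, Lemma~\ref{nuovo-lemma-appr}), passes to a superlevel set $\{z>\gamma\}$ with $\gamma>A_1$, isometrically embeds the corresponding piece of graph into a complete stochastically complete manifold via a ``doubling'' construction (Proposition~\ref{prop-double}), and then forms $\tilde z = W(\psi_1^{-\beta}e^{-C_1 u}-\delta)$. Because $\psi_1^{-\beta}\to 0$ at infinity, $\tilde z$ attains a genuine interior maximum on a bounded set, at which the differential inequality \eqref{LLWz+1} produces $\LL_W\tilde z>0$, a contradiction. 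Your proposal has the scalar inequality right but is missing the entire compactness mechanism: the slight perturbation of constants, the superlevel set, the embedding into a complete carrier, and the Khasminskii potential used to truncate at infinity.
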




\begin{remark}
\emph{If $\Omega \subset M$ is open with non-empty boundary, and $v : \Omega \ra \R$, we agree to write 
	\[
	\limsup_{x \to \partial \Omega} v \doteq \inf_{V \text{ open, } \overline{V} \subset \Omega} \left( \sup_{\Omega \backslash \overline{V}} v \right),
	\]
where $\overline{V}$ is the closure of $V$ in $M$. 
}
\end{remark}

A systematic discussion of values of $A$, $C$ satisfying the requirements of Theorem \ref{intro-prop-bound} for various $\kappa\geq0$, $H\in\R$ is carried out in Section \ref{sec:grad-bound}. 
In particular, for $H\leq 0$ we can choose $A=1$, $C=\sqrt{m-1}\kappa$ and we recover the gradient bound for positive minimal graphic functions obtained in \cite{cmmr},
$$
	\sup_\Omega \frac{\sqrt{1+|Du|^2}}{e^{\sqrt{m-1}\kappa u}} \leq \max \left\{ 1, \limsup_{x\to\partial\Omega} \frac{\sqrt{1+|Du(x)|^2}}{e^{\sqrt{m-1}\kappa u(x)}} \right\}.
$$
When $\kappa=0$, the choice $A=1$, $C=0$ is admissible for each $H\in\R$, so for any CMC-graphic function $u\geq 0$ from \eqref{intro-bound} we have
\begin{alignat*}{2}
	\sup_\Omega |Du| & = \limsup_{x\to\partial\Omega} |Du|(x) && \qquad \text{if } \, \Omega \neq M, \\
	Du & \equiv 0 && \qquad \text{if } \, \Omega = M.
\end{alignat*}

\begin{remark}
\emph{Although the conclusion of Theorem \ref{intro-prop-bound} is localized on an open subset $\Omega \subset M$, a global curvature bound on $M$ like the first in \eqref{eq_Ricci} is crucial in a technical step of the proof. The existence of extensions $M$ of $\Omega$ satisfying the first in \eqref{eq_Ricci} is not automatic: indeed, an example constructed in \cite{pigovero} shows that a smooth, complete manifold with boundary $\overline{\Omega}$ satisfying $\Ricc \ge - \kappa^2$ ($\kappa \in \R$) may not be isometrically embeddable into a complete manifold without boundary $(M,\sigma)$ of the same dimension, if we require that the Ricci curvature of $M$ is bounded from below by some other, possibly different, constant. Arguing similarly, one may be able to find an example of $\overline{\Omega}$ with $\Ricc \ge - \kappa^2$ that is not embeddable into a complete, boundaryless manifold $(M,\sigma)$ of the same dimension whose Ricci curvature satisfies the first in \eqref{eq_Ricci}. 
}
\end{remark}

\begin{remark}
\emph{The conditions in \eqref{eq_CH} relate to classical obstructions for the existence of entire CMC graphs over $M$ (with no a priori bound). Indeed, extending previous results by E.Heinz \cite{heinz}, S.-S.Chern \cite{chern} and H.Flanders \cite{flanders} in the Euclidean setting, I.Salavessa \cite{salavessa} showed that a complete manifold $M$ with 
	\[
	\Ricc \ge - (m-1)\kappa^2
	\]
does not support any entire solution of 
\begin{equation} \label{CMCeq-salavessa}
	\diver \left( \frac{Du}{\sqrt{1+|Du|^2}} \right) = H
\end{equation}
whenever $|H|>(m-1)\kappa$, while entire non-negative solutions of \eqref{CMCeq-salavessa} exist on the $m$-dimensional hyperbolic space with sectional curvature $-\kappa$ for each value of $0 \leq H \leq (m-1)\kappa$. 
}
\end{remark}

\section{Preliminaries}\label{sec_prelim}

\subsection{CMC graphs}

Let $(M, \sigma)$ be a Riemannian manifold of dimension $m$, with volume measure $\di x$ and induced $(m-1)$-dimensional Hausdorff measure $\di \mathscr{H}^{m-1}$. The metric $\sigma$ will also be denoted with $( \, , \, )$. We let $| \cdot |$ and $D$ denote, respectively, the norm and Levi-Civita connection of $\sigma$. Give coordinates $(y,x)$ on $\bar M = \R \times M$, and let $\bar D$ denote the Levi-Civita connection of the product metric $\metric = \di y^2 + \sigma$ on $\bar M$.

Let $\Omega\subseteq M$ be an open subset and $u : \Omega \to \R$ a twice differentiable function. We let
$$
	\Sigma = \{ (u(x),x) : x \in \Omega \} \subseteq \R\times M
$$
be the graph of $u$ over $\Omega$, and we denote by $g$ the graph metric induced on $\Sigma$ from $\metric$, and by $\|\cdot\|$, $\nabla$, $\Delta_g$ the induced norm, connection and Laplace operator on $(\Sigma,g)$.

The graph map
$$
	f : \Omega \ra \bar M : x \mapsto (u(x),x)
$$
is a diffeomorphism onto the image $f(\Omega) = \Sigma$, whose inverse $\pi : \Sigma \to \Omega$ is the restriction to $\Sigma$ of the canonical projection $\R \times M \to M$. We use these maps to identify the graph $\Sigma$ with the base domain $\Omega$. In particular, the pulled-back metric $f^\ast g$ on $\Omega$ will still be denoted by $g$, and $\|\cdot\|$, $\nabla$, $\Delta_g$ will also denote the norm, connection and Laplace operator of the resulting manifold $(\Omega,g)$.


Let $\{\partial_j\}$ be a local coordinate frame on $(M,\sigma)$, with $\sigma = \sigma_{ij} \di x^i \otimes \di x^j$. We write the graph metric $g$ on $\Sigma$ as 
$$
	g_{ij} = \sigma_{ij} + u_iu_j,
$$
where $\di u = u_i \di x^i$. Defining $u^j = \sigma^{jk}u_k$ and $W^2 = 1+ |Du|^2 = 1+ u_i u^i$, the components of the inverse $g^{ij}$ are 
$$
	g^{ij} = \sigma^{ij} - \frac{u^iu^j}{W^2},
$$
and the Riemannian volume and hypersurface measures of $g$ write as
\[ 
	\di x_g = W \di x, \qquad \di \mathscr{H}^{m-1}_g = W \di \mathscr{H}^{m-1}.
\]
For any function $\phi \in C^1(\Omega)$, we write $\di \phi = \phi_i \di x^i$ and $D\phi = \phi^i \partial_i$ with $\phi^i = \sigma^{ik}\phi_k$. Then, $\nabla\phi$ is given in local components by
$$
	\nabla\phi = g^{ik}\phi_k\partial_i = D\phi - \frac{(D\phi,D u)}{W^2} Du
$$
In particular, note that 
\begin{equation}\label{nablau2}
	\nabla u = \frac{Du}{W^2}, \qquad \|\nabla u\|^2 = g^{ij} u_i u_j = \frac{W^2-1}{W^2}, \qquad W^{-2} = 1- \|\nabla u\|^2
\end{equation}
and that, for every $\phi\in C^1(\Omega)$,
\begin{equation} \label{D_phi_nabla_phi}
    \langle \nabla\phi,\nabla u\rangle = g^{ij}\phi_i u_j = \frac{(D\phi,Du)}{W^2}.
\end{equation}
%
%

With the agreements of the Introduction, the normal vectors to $\Sigma$ and $M$, pointing outward from the subgraph of $u$, are respectively
\begin{equation}\label{normals}
{\bf n} = \frac{\partial_y - u^j \partial_j}{W}, \qquad {\bf \bar n} = -\partial_y.
\end{equation}
Thus, 
$$
\langle \eta, \bar \eta \rangle = - \langle {\bf n}, {\bf \bar n} \rangle = \frac{1}{W},
$$
and the angle condition $\langle\eta, \bar \eta \rangle = \cos \gamma$ rewrites as $|D u| = \tan \gamma$. Requiring $u=0$ on $\partial \Omega$ yields $\partial_{\bar \eta} u = -|D u|$ and we deduce \eqref{eq_overdet}.

A differentiation gives that the second fundamental form $\II_\Sigma$ and the unnormalized mean curvature $H$ in the ${\bf n}$ direction have components 
\begin{equation}\label{seconffund}
	\mathrm{II}_{ij} = \frac{u_{ij}}{W}, \qquad H = g^{ij}\mathrm{II}_{ij} = \diver \left( \frac{Du}{W}\right),
\end{equation}
with $\diver$ the divergence in $\sigma$. Moreover, from the identity
\[
	\Gamma_{ij}^k = \gamma^k_{ij} - \frac{u^k u_{ij}}{W^2}
\]
relating the Christoffel symbols $\Gamma_{ij}^k$ and $\gamma_{ij}^k$ of, respectively, $g$ and $\sigma$, for every $\phi : M \to \R$ the components of the graph Hessian $\nabla^2 \phi$ and of $\Delta_g \phi$ can be written as
\begin{equation}\label{hessian_on_graph}
	\left\{ \begin{array}{l}
		\nabla^2_{ij} \phi = \phi_{ij} - \phi_ku^k \dfrac{u_{ij}}{W^2} \\[0.2cm]
		\Delta_g \phi = g^{ij}\phi_{ij} - \phi_ku^k \dfrac{H}{W}.
	\end{array}\right.
\end{equation}
In particular, 
\begin{equation}\label{hessian_on_graph_u}
	\nabla^2_{ij} u = \frac{u_{ij}}{W^2} = \frac{\II_{ij}}{W}, \qquad \Delta_g u = \frac{H}{W}.
\end{equation}


From now on we assume that $H$ is constant. For every Killing field $\bar X$ defined in a neighbourhood $U\subseteq\bar M$ of the graph $\Sigma$, the angle function $\Theta_{\bar X} : = \langle \mathbf{n}, \bar X \rangle$ solves Jacobi equation
\begin{equation} \label{Jeq}
	0 = J_\Sigma \Theta_{\bar X} : = \Delta_g \Theta_{\bar X} + \Big( \|\II_\Sigma\|^2 + \overline{\Ricc}({\bf n},{\bf n})\Big) \Theta_{\bar X},
\end{equation}
with $\overline{\Ricc}$ the Ricci curvature of $\bar M$. This is the case, for instance, of the angle function $\Theta_{\partial_y} = \langle \mathbf{n},\partial_y \rangle = W^{-1}$ associated to the Killing field $\partial_y$. As a consequence $W$ satisfies
\begin{equation}\label{eq_W}
	\Delta_g W = \left(\|\II_\Sigma\|^2 + \overline{\Ricc}({\bf n}, {\bf n})\right) W + 2 \frac{\|\nabla W\|^2}{W}.
\end{equation}
If $X$ is a Killing field in $(\Omega,\sigma)$ then we can extend it by parallel transport on the cylinder $\R\times\Omega \subseteq \bar M$ to a Killing field $\bar X$ satisfying $\langle \partial_y,\bar X \rangle = 0$, with corresponding angle function $\Theta_{\bar X} = \langle \mathbf{n}, \bar X \rangle = W^{-1} (Du, X)$. Since \eqref{Jeq} holds for both $\Theta_{\partial_y}$ and $\Theta_{\bar X}$, the quotient 
	\[
	\bar v : = \Theta_{\bar X} / \Theta_{\partial_y} = (Du, X)
	\]
is a solution of
\begin{equation} \label{killing-Du}
	\Delta_g \bar v - 2\left\langle \frac{\nabla W}{W}, \bar v \right\rangle = 0.
\end{equation}
By introducing the operator
\begin{equation} \label{LLW-def}
	\LL_W \phi : = W^{2} {\rm div}_g \big( W^{-2} \nabla \phi \big)	= \Delta_g \phi - 2 \left\langle \frac{\nabla W}{W}, \nabla \phi \right\rangle,
\end{equation}
equations \eqref{eq_W} and \eqref{killing-Du} can be rewritten as
\begin{equation} \label{LLWvbar}
	\LL_W \bar v = 0
\end{equation}
and
\begin{equation} \label{LLWW}
	\LL_W W = \left(\|\II_\Sigma\|^2 + \overline{\Ricc}({\bf n}, {\bf n}) \right) W.
\end{equation}
Considering the weighted measures
\[
	\di x_W = W^{-2} \di x_g, \qquad \di \mathscr{H}^{m-1}_W = W^{-2} \di \mathscr{H}^{m-1}_g,
\]
%
%
%
%
we note that $\LL_W$ is symmetric with respect to $\di x_W$. For a given $C\in\R$, we compute
$$
	\Delta_g e^{-Cu} = -C e^{-Cu} \Delta_g u + C^2 e^{-Cu} \|\nabla u\|^2 = e^{-Cu} \left(C^2 \|\nabla u\|^2  - \frac{CH}{W}\right),
$$
so the function
$$
	z = \frac{W}{e^{Cu}}
$$
satisfies
\begin{equation} \label{LLWz}
\begin{split}
	\LL_W z & = W \Delta_g e^{-Cu} + e^{-Cu} \LL_W W \\
	& = \left( \|\II_\Sigma\|^2 - \frac{CH}{W} + \overline{\Ricc}({\bf n},{\bf n}) + C^2 \|\nabla u\|^2 \right) z.
\end{split}
\end{equation}
We observe that if
\begin{equation}\label{iporicci}
	\Ricc \ge -(m-1)\kappa^2, 
\end{equation}
for some $\kappa \ge 0$, then, using \eqref{nablau2},
\begin{equation}\label{eq_lowerricci_pre}
	\overline{\Ricc}({\bf n},{\bf n}) = \Ricc \left(\frac{Du}{W},\frac{Du}{W}\right) \ge - (m-1)\kappa^2 \frac{W^2-1}{W^2} = -(m-1)\kappa^2 \|\nabla u\|^2.
\end{equation}
By Cauchy inequality we also have
\begin{equation}\label{Newton}
	\|\II_\Sigma\|^2 \geq \frac{1}{m}\left( \mathrm{trace}_g(\II_\Sigma) \right)^2 = \frac{H^2}{m},
\end{equation}
and therefore, under assumption \eqref{iporicci}, equation \eqref{LLWz} gives
\begin{equation} \label{LLWz1}
	\LL_W z \geq \left( \frac{H^2}{m} - \frac{CH}{W} + (C^2 - (m-1)\kappa^2) \frac{W^2-1}{W^2} \right) z.
\end{equation}

We conclude this section by proving an analogous differential inequality for a modification of the function $z$. Let $\psi_0$ be a positive function satisfying
$$
	\Delta_g \psi_0 \leq \lambda\psi_0
$$
for some $\lambda \in \R^+$. Let $\beta>0$ be given and set
$$
	\psi = \psi_0^{-\beta}.
$$
Then,
\begin{align*}
	\frac{\nabla\psi}{\psi} & = - \beta \frac{\nabla\psi_0}{\psi_0}, \\
	\Delta_g \psi & = -\beta \psi_0^{-\beta-1} \Delta_g \psi_0 + \beta(1+\beta) \psi_0^{-\beta-2}\|\nabla\psi_0\|^2 \\
	& = \psi \left( - \beta\frac{\Delta_g \psi_0}{\psi_0} + \frac{1+\beta}{\beta} \frac{\|\nabla\psi\|^2}{\psi^2} \right) \\
	& \geq \psi \left( - \beta\lambda + \frac{1+\beta}{\beta} \frac{\|\nabla\psi\|^2}{\psi^2} \right).
\end{align*}
We now compute
\begin{align*}
	\Delta_g (\psi e^{-Cu}) & = e^{-Cu} \Delta_g \psi + \psi \Delta_g (e^{-Cu}) - 2 e^{-Cu} \langle C\nabla u,\nabla \psi \rangle \\
	& \geq \psi e^{-Cu}\left( - \beta\lambda + \frac{1+\beta}{\beta} \frac{\|\nabla\psi\|^2}{\psi^2} + C^2 \|\nabla u\|^2 - 2\left\langle C\nabla u, \frac{\nabla \psi}{\psi} \right\rangle - \frac{C H}{W}\right)
\end{align*}
that, by Young's inequality
$$
	- 2\left\langle C\nabla u, \frac{\nabla \psi}{\psi} \right\rangle \geq - \frac{1+\beta}{\beta} \frac{\|\nabla\psi\|^2}{\psi^2} - \frac{\beta}{1+\beta} C^2 \|\nabla u\|^2,
$$
leads to
$$
	\Delta_g (\psi e^{-Cu}) \geq \psi e^{-Cu} \left( \frac{C^2\|\nabla u\|^2}{1+\beta} - \beta\lambda - \frac{CH}{W} \right).
$$
For $\delta>0$, we let
$$
	\eta = \psi e^{-Cu} - \delta, \qquad \tilde{z} = W \eta.
$$
Then, a direct computation gives
\begin{equation} \label{LLWz+}
	\begin{split}
		\LL_W \tilde{z} & = W \Delta_g \eta + \eta \LL_W W \\
		& \geq \left( \|\II_\Sigma\|^2 + \overline{\Ricc}({\bf n},{\bf n}) + \left(1+\frac{\delta}{\eta}\right) \left( \frac{C^2 \|\nabla u\|^2}{1+\beta} - \beta\lambda - \frac{CH}{W} \right) \right) \tilde{z}
	\end{split}
\end{equation}
in the set $\{ \eta > 0 \}$. Using again \eqref{eq_lowerricci_pre} and \eqref{Newton}, and the definition of $\tilde{z}$, under assumption \eqref{iporicci}, we obtain
\begin{equation} \label{LLWz+1}
\begin{array}{lcl}
	\LL_W \tilde{z} & \geq & \disp \left( \frac{H^2}{m} - \frac{CH}{W} - (m-1)\kappa^2 \frac{W^2-1}{W^2} \right. \\[0.4cm]
	& & \disp + \left. \left(1+\frac{\delta}{\eta}\right) \left( \frac{C^2}{1+\beta}\frac{W^2-1}{W^2} - \beta\lambda \right) - \frac{CH\delta}{\tilde{z}} \right) \tilde{z} \qquad \text{on } \, \big\{ \eta > 0\big\}.
\end{array}
\end{equation}

\section{AK-duality and global gradient bounds for CMC graphs} \label{sec:grad-bound}

The goal of this section is to prove the following 

\begin{theorem} \label{thm-CMCbound}
	Let $(M,\sigma)$ be a complete Riemannian manifold of dimension $m \geq 2$, and let $\Omega \subseteq M$ an open set such that
	$$
		\Ricc \geq -(m-1)\kappa^2 \qquad \text{in } \, \Omega
	$$
	for some constant $\kappa \geq 0$. Suppose that $u \in C^2(\Omega)$ satisfies
	$$
		\diver \left( \dfrac{Du}{\sqrt{1+|Du|^2}} \right) = H, \quad u \geq 0 \qquad \text{in } \, \Omega
	$$
	for some $H\in\R$, and that for some (hence, any) $q\in\Sigma$
	\begin{equation} \label{vol-gr-Sigma}
		\liminf_{r\to \infty} \frac{\log|B^g_r(q)|_g}{r^2} < \infty,
	\end{equation}
where $(\Sigma,g)$ is the graph of $u$ over $\Omega$. Let $C\geq 0$ satisfy
	\begin{align}
		\label{appr-hp1}
		\frac{H^2}{m} + C^2 - (m-1)\kappa^2 \geq 0 & \qquad \text{if} \quad H \leq 0, 
		\\
		\label{appr-hp2}
		\frac{H^2}{m} + C^2 - (m-1)\kappa^2 > 0 & \qquad \text{if} \quad H > 0 
	\end{align}
	and let $A\geq 1$ be such that
	\begin{equation} \label{CAHge0}
		\frac{H^2}{m} - \frac{CH}{t} + \left( C^2 - (m-1)\kappa^2 \right) \frac{t^2-1}{t^2} \geq 0 \qquad \text{for every } \, t \geq A.
	\end{equation}
	%
	%
	%
	Then,
	\begin{equation} \label{CMC-gen-bound}
		\frac{W}{e^{Cu}} \leq \max \left\{ A, \limsup_{y\to\partial\Omega} \frac{W(y)}{e^{C u(y)}} \right\} \qquad \text{on } \, \Omega.
	\end{equation}
	In particular, in case $\Omega=M$ we have
	$$
		\sqrt{1+|Du|^2} \leq A e^{Cu} \qquad \text{on } \, M.
	$$
\end{theorem}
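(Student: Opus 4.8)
The plan is a contradiction argument driven by the differential inequality for $z:=W/e^{Cu}$ recorded in the Preliminaries, closed off by a potential-theoretic Liouville statement whose only geometric input is the quadratic--exponential volume growth \eqref{vol-gr-Sigma}. Put $a:=\max\{A,\ \limsup_{y\to\partial\Omega} z(y)\}$, with the convention that the $\limsup$ is $-\infty$ (so $a=A$) when $\Omega=M$; if $a=+\infty$ there is nothing to prove, so assume $a<\infty$, and note that \eqref{CMC-gen-bound} then becomes the assertion $z\le a$ on $\Omega$. Since $u\ge 0$ and $C\ge 0$ one has $W=z\,e^{Cu}\ge z$ (and $W\ge1$), so on the open set $U:=\{z>a\}$ we get $W> a\ge A$; hence the bracket on the right-hand side of \eqref{LLWz1}, evaluated along the CMC graph, is nonnegative by the choice \eqref{CAHge0} (with $t=W$), and therefore $\LL_W z\ge 0$ on $U$. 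Recall that $\LL_W\phi=W^{2}\mathrm{div}_g\!\big(W^{-2}\nabla\phi\big)$ is the weighted Laplacian of $\di x_W=W^{-2}\di x_g$, that it is symmetric with respect to $\di x_W$, and that $\di x_W\le\di x_g$ because $W\ge1$.

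Suppose, for contradiction, that $\sup_\Sigma z>a$ and fix $a'\in(a,\sup_\Sigma z)$. The function $v:=(z-a')_+$ is nonnegative, locally Lipschitz, not identically zero, and $\LL_W$-subharmonic on all of $\Sigma$ (it vanishes off $U$, where $\LL_W z\ge0$, and the positive part of a subsolution is a subsolution). Crucially, since $\limsup_{y\to\partial\Omega}z(y)\le a<a'$, the set $\{z<a'\}$ contains a neighbourhood of $\partial\Omega$ in $\Omega$, so $v\equiv0$ near $\partial\Omega$; hence no contribution from $\partial\Omega$ appears when $\LL_W z\ge0$ is integrated against $\varphi^{2}v\,\di x_W$ with $\varphi\in\lip_c(\overline\Omega)$. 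Using $\nabla z=\nabla v$ on $U$ together with Young's inequality, this integration by parts yields the Caccioppoli estimate
$$ \int_\Sigma \varphi^{2}\,\|\nabla v\|^{2}\,\di x_W\ \le\ 4\int_\Sigma v^{2}\,\|\nabla\varphi\|^{2}\,\di x_W \qquad\forall\,\varphi\in\lip_c(\overline\Omega). $$

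It remains to let $\varphi\nearrow1$ and force the right-hand side to vanish. The key pointwise identity is $z^{2}\,\di x_W=W^{2}e^{-2Cu}\cdot W^{-2}\di x_g=e^{-2Cu}\,\di x_g\le\di x_g$, which, with $0\le v\le z$, gives $\int_{B^g_r(q)}v^{2}\,\di x_W\le|B^g_r(q)|_g$; hence by \eqref{vol-gr-Sigma} we get $\liminf_{r\to\infty}r^{-2}\log\!\int_{B^g_r(q)}v^{2}\,\di x_W<\infty$. A nonnegative $\LL_W$-subharmonic function with $L^{2}_{\di x_W}$-growth of this quadratic--exponential order must be constant; this is a Karp-type vanishing theorem for the weighted manifold $(\Sigma,\di x_W)$, and it is precisely here that the Ahlfors--Khasminskii duality of \cite{marivaltorta,maripessoa,maripessoa_2} enters, converting the (weighted) parabolicity implied by \eqref{vol-gr-Sigma} into a family of cutoffs $\varphi$ --- adapted to an exhaustion of $(\Sigma,\di x_W)$ rather than to the distance function $r$, which is unavailable since no sectional bound on $M$ is assumed --- for which the right-hand side of the Caccioppoli estimate tends to zero along a suitable sequence. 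The metric incompleteness of $\Sigma$ (present when $\Omega\ne M$) occurs only along $\partial\Omega$, where $v\equiv0$, hence is harmless; alternatively, the possibility that $z$ is unbounded can be absorbed by first applying the same reasoning to the regularized subsolution $\tilde z=W(\psi_0^{-\beta}e^{-Cu}-\delta)$ of \eqref{LLWz+1}, whose superlevel sets are relatively compact once $\psi_0$ is a suitable exhaustion, and letting $\beta,\delta\to0$. Either way $v\equiv\mathrm{const}$, and since $v\equiv0$ near $\partial\Omega$ this constant is $0$ when $\Omega\ne M$, i.e.\ $z\le a'$, contradicting $a'<\sup_\Sigma z$.

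Finally, in the closed case $\Omega=M$ one must still rule out that the constant above is a positive number $c$; but then $z\equiv a'+c>A$ on $M$, so $\nabla z\equiv0$, \eqref{LLWz1} is forced to be an equality, and combining this with \eqref{eq_W} and $\nabla W=CW\nabla u$ (immediate from $W=(a'+c)e^{Cu}$) gives $\|\II_\Sigma\|^{2}\equiv0$; the resulting parallelism of $\mathbf n$ along the totally geodesic graph $\Sigma$ then forces $u$ to be constant, whence $W\equiv1$ and $z=e^{-Cu}\le1\le A<a'+c$, a contradiction. Thus $c=0$, $z\le A$ on $M$, i.e.\ $\sqrt{1+|Du|^{2}}\le Ae^{Cu}$. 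The substantive obstacle throughout is the third paragraph --- producing a Liouville/vanishing statement for $\LL_W$-subharmonic functions under only the volume-growth condition \eqref{vol-gr-Sigma}, without the localization-by-$r$ that underlies gradient estimates under sectional bounds --- and this is exactly the role played by the AK-duality machinery of this section.
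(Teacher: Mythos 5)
Your setup (arguing by contradiction, deriving $\LL_W z \ge 0$ on $\{z>a\}$ from \eqref{LLWz1}--\eqref{CAHge0}, noting that $v=(z-a')_+$ is a weak $\LL_W$-subsolution that vanishes near $\partial\Omega$, and deriving the Caccioppoli bound $\int\varphi^2\|\nabla v\|^2\,\di x_W\le 4\int v^2\|\nabla\varphi\|^2\,\di x_W$) is correct, and the observation $z^2\,\di x_W=e^{-2Cu}\,\di x_g\le\di x_g$ is a nice reduction. But the closing Liouville step is genuinely wrong. Condition \eqref{vol-gr-Sigma} gives \emph{stochastic completeness}, not parabolicity: $\R^n$ with $n\ge3$ has polynomial volume growth (hence satisfies \eqref{vol-gr-Sigma}) yet carries bounded nonconstant subharmonic functions, so there is no Karp-type vanishing theorem for $\LL_W$-subharmonic functions under $\liminf r^{-2}\log\int_{B_r}v^2\,\di x_W<\infty$ (the genuine Karp theorem requires $\liminf r^{-2}\int_{B_r\setminus B_{r_0}}v^2<\infty$, i.e.\ quadratic growth of the \emph{integral}, not of its logarithm). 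Likewise, the statement that \eqref{vol-gr-Sigma} ``implies (weighted) parabolicity'' is false, so the AK-duality machinery does not produce cutoffs with $\int v^2\|\nabla\varphi_j\|^2\,\di x_W\to0$. What stochastic completeness \emph{does} give is (i) the vanishing of bounded nonnegative solutions of $\Delta u\ge\lambda u$ with $\lambda>0$, and (ii) the weak maximum principle at infinity --- neither of which is directly usable here because $v$ is not a priori bounded, and because \eqref{LLWz1} only gives $\LL_W z\ge(\cdots)z$, not $\LL_W v\ge\lambda v$ after subtracting $a'$.

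Beyond the Liouville gap, two more points are brushed aside. First, when $\Omega\ne M$ the graph $(\Sigma,g)$ is not complete, so even stochastic completeness criteria cannot be invoked directly on $\Sigma$; the paper deals with this by isometrically embedding a connected component of the superlevel set into a complete manifold via Proposition~\ref{prop-double}, which your argument omits entirely (saying the incompleteness is ``harmless'' because $v$ vanishes near $\partial\Omega$ is not a substitute). Second, your ``alternative'' of passing to $\tilde z=W(\psi_0^{-\beta}e^{-Cu}-\delta)$ is precisely what the paper does, but for a different and decisive reason: not to absorb unboundedness of $z$, but to render the superlevel set $\{\tilde z>\gamma\}$ \emph{relatively compact}, so that the maximum is attained at an interior point where the strict inequality $\LL_W\tilde z>0$ (obtained by the approximation $C<C_2<C_1$, $A<A_1$ of Lemma~\ref{nuovo-lemma-appr}, which produces the crucial strict gap $\eps_0>0$ in \eqref{CAHeps0}) gives the contradiction via the classical maximum principle. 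That pointwise route bypasses the integral Caccioppoli/Liouville argument altogether and is why the paper only needs stochastic completeness, not parabolicity.
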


We postpone the proof to subsection \ref{subs-bound-proof}, while commenting here on sufficient conditions for the validity of \eqref{vol-gr-Sigma} and on admissible choices of the parameters $A$, $C$ satisfying the above requirements. As remarked in the Introduction, via a calibration argument \eqref{vol-gr-Sigma} is satisfied under mild assumptions on $\partial\Omega$ and on the Ricci curvature of $(M,\sigma)$, made precise by the next Proposition \ref{prop-stoch}, that we draw from Lemma 2 of \cite{cmmr}. In particular, \eqref{vol-gr-Sigma} holds in case $\Omega = M$ and we obtain the subsequent Corollary \ref{cor-entire}.

\begin{proposition}[\cite{cmmr}] \label{prop-stoch}
	Let $M$ be a complete manifold of dimension $m \geq 2$. Let $o\in M$ be a fixed origin, set $r(x) = \dist_g(o,x)$ for $x\in M$ and assume that
	\begin{equation} \label{ipo-Ric-quad}
	\Ricc \ge - \kappa(1+r^2) \qquad \text{on } \, M
	\end{equation}
	for some constant $\kappa > 0$. Let $\Omega \subseteq M$ be an open subset supporting a solution $u \in C^2(\Omega)$ of
	$$
		\diver\left( \frac{Du}{\sqrt{1+|Du|^2}} \right) = H \qquad \text{in } \, \Omega
	$$
	for some bounded function $H : \Omega \to \R$. Assume that at least one of the following conditions is satisfied:
	\begin{itemize}
		\item [-] $\Omega = M$;
		\item [-] $u \in C(\overline{\Omega})$ and $u$ is constant on $\partial\Omega$;
		\item [-] $\Omega$ has locally  Lipschitz boundary and
		$$
		\liminf_{r\to \infty} \frac{\log|\partial\Omega \cap B_r|}{r^2} < \infty.
		$$
	\end{itemize}
	Then, \eqref{vol-gr-Sigma} holds for each fixed $q\in\Sigma$.
\end{proposition}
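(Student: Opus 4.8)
The plan is to reduce the intrinsic volume growth of $(\Sigma,g)$ to a weighted volume estimate over geodesic balls of $M$, and then to prove the latter by a calibration argument combined with comparison geometry. First I would record two elementary facts. Since $g = \sigma + \di u\otimes\di u \ge \pi^*\sigma$, the projection $\pi:(\Sigma,g)\to(M,\sigma)$ is $1$-Lipschitz, so $\pi(B^g_r(q))\subseteq B^M_r(o)$ with $o := \pi(q)$; and since $\|\nabla u\|^2 = 1 - W^{-2} < 1$, the function $u$ is $1$-Lipschitz on $(\Sigma,g)$, whence $|u-u(q)| < r$ on $B^g_r(q)$. Recalling $\di x_g = W\,\di x$, it follows that $|B^g_r(q)|_g \le \int_{D_r} W\,\di x$ with $D_r := \{x\in\Omega : \dist_M(o,x)<r,\ |u(x)-u(q)|<r\}$, and the goal becomes to bound $\int_{D_r}W\,\di x$ by $\exp(Cr^2)$ along a sequence $r_k\to\infty$.

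The main step is a calibration. The downward unit normal of $\Sigma$, extended constantly in the vertical direction, yields the unit vector field $N(y,x)=W(x)^{-1}(\partial_y - Du(x))$ on $\R\times\Omega$, which by \eqref{normals} restricts to $\mathbf n$ along $\Sigma$ and satisfies $\diver_{\bar M}N = \diver_M(-Du/W) = -H$. Applying the divergence theorem to $N$ on the region $R_r\subseteq\bar M$ bounded by the graph of $u$ over $D_r$, the horizontal slice $D_r\times\{u(q)\}$, and the lateral wall over $\partial D_r$, and using $\langle N,\mathbf n\rangle = 1$ on the graph, $|\langle N,\partial_y\rangle| = W^{-1}\le 1$ on the slice, and $|N|\le 1$ throughout, I would get $\int_{D_r}W\,\di x \le |H|\,|R_r| + |D_r| + r\,\mathscr{H}^{m-1}(\partial D_r)$ with $|R_r|\le r|D_r|$ (here $r$ is chosen so that $B^M_r(o)$ has Lipschitz boundary, which holds for a.e.\ $r$). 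Now the Ricci lower bound $\Ricc \ge -\kappa(1+r^2)$ on $M$ enters via Bishop--Gromov comparison with the model whose warping function solves $h'' = \tfrac{\kappa}{m-1}(1+r^2)h$, $h(0)=0$, $h'(0)=1$ — which grows like $\exp(cr^2)$ — giving $|B^M_r(o)| + \mathscr{H}^{m-1}(\partial B^M_r(o)) \le \exp(Cr^2)$; this controls $|D_r|$, $|R_r|$ and the portion of $\partial D_r$ contained in $\partial B^M_r(o)$. The portion of $\partial D_r$ on the truncation level sets $\{u = u(q)\pm r\}$ is absorbed through the coarea inequality $\int_{u(q)}^{u(q)+\rho}\mathscr{H}^{m-1}(\{u=t\}\cap B^M_\rho)\,\di t \le \int_{B^M_\rho\cap\Omega}W\,\di x$, choosing a good radius $r\in[\rho,2\rho]$ (or by replacing the sharp truncation by a smooth cut-off in $y$).

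The remaining piece of $\partial D_r$, the wall over $\partial\Omega$, is where the three alternative hypotheses are used. If $\Omega = M$ it is empty. If $\Omega$ has locally finite perimeter, this wall has height $|u-u(q)|<r$, so its contribution is $\le r\,\mathscr{H}^{m-1}(\partial\Omega\cap B^M_r)$, which stays below $\exp(Cr^2)$ along the sequence realizing $\liminf_r \log\mathscr{H}^{m-1}(\partial\Omega\cap B^M_r)/r^2 < \infty$. If instead $u$ is locally constant on $\partial\Omega$, I would modify $R_r$ near each boundary component $\partial_j\Omega$, where $u\equiv c_j$: using the horizontal slice at height $c_j$ there, split according to the sign of $u-c_j$, the wall of $R_r$ over $\partial_j\Omega$ degenerates to zero height, hence zero $m$-volume, and drops out. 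Assembling the estimates gives $\int_{D_{r_k}}W\,\di x\le\exp(Cr_k^2)$ along a sequence $r_k\to\infty$, i.e.\ \eqref{vol-gr-Sigma}. I expect the main obstacle to be exactly this bookkeeping of the lateral boundary of $R_r$: one must arrange the slicing so that, in each regime, every face of $\partial R_r$ is either absent, of negligible $m$-volume, or controlled by geodesic spheres of $M$ (hence by the quadratic Ricci bound) or by the explicit perimeter hypothesis, all while keeping the region compact enough for the divergence theorem to apply up to $\partial\Omega$; the merely local constancy of $u|_{\partial\Omega}$ and the possible unboundedness of $u$ are what make this delicate.
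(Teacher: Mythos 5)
Your scheme --- extend the unit normal of the graph to the divergence-bounded unit field $N = W^{-1}(\partial_y - Du)$ on $\R\times\Omega$, calibrate via the divergence theorem over a region under the graph, control the slice and the lateral wall over $\partial B^M_r$ using Bishop--Gromov comparison under $\Ricc \ge -\kappa(1+r^2)$, and treat the wall over $\partial\Omega$ case by case --- is precisely the strategy behind Lemma~2 of \cite{cmmr}, which this paper invokes (``via a calibration argument'') rather than re-proves. Your treatment of the three boundary alternatives is also correct: no wall when $\Omega = M$; a zero-height wall when $u$ is constant on $\partial\Omega$, by anchoring the horizontal slice at that constant; and wall volume $\le r\,\mathscr{H}^{m-1}(\partial\Omega\cap B^M_r)$ in the finite-perimeter case, which the third hypothesis controls along a sequence.

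The step that does not close as written is the treatment of the truncation faces $\{u = u(q)\pm r\}$. Coarea gives $\int_{R/2}^{R}\mathscr{H}^{m-1}(\{|u-u(q)|=t\}\cap B^M_R\cap\Omega)\,\di t \le \int_{D_R}W\,\di x$, so a ``good'' $r\in[R/2,R]$ only controls that wall by a constant multiple of the very quantity $\int_{D_R}W\,\di x$ you are trying to estimate, yielding the circular bound $V(r) \le e^{CR^2} + 2V(R)$ (and the version you wrote, $\int_{B^M_\rho\cap\Omega}W\,\di x$ with no vertical cap, is not even a priori finite, since $|Du|$ may blow up at $\partial\Omega$). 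The fix you mention only parenthetically should be the main argument: truncate $u$, not the domain. Calibrate against the graph of $u_r := \max(\min(u, u(q)+r),\, u(q)-r)$ over $B^M_r\cap\Omega$; its flux of $N$ dominates $\int_{D_r}W\,\di x \ge |B^g_r(q)|_g$ because the flat caps contribute non-negatively ($\langle N,\partial_y\rangle = W^{-1}>0$), the region beneath lies in a slab of height $2r$ so $|R_r|\le 2r|B^M_r|$, and the only remaining lateral faces sit over $\partial(B^M_r\cap\Omega)$ with height $\le 2r$. Every term in the resulting Gauss--Green identity is then bounded by $e^{Cr^2}$ along a suitable sequence $r_k\to\infty$ with no self-reference, and \eqref{vol-gr-Sigma} follows.
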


From a direct application of Theorem \ref{thm-CMCbound} and Proposition \ref{prop-stoch}, we readily deduce Theorem \ref{intro-prop-bound} in the Introduction.

\begin{remark}
	\emph{Direct inspection of the proof of Lemma 2 in \cite{cmmr} shows that Proposition \ref{prop-stoch} remains true if boundedness of $H$ is replaced by the weaker assumption
		$$
		\limsup_{r\to \infty} \frac{\log\left( \disp\int_{B_r\cap\Omega} |H| \right)}{r^2} < \infty.
		$$
	}
\end{remark}

\begin{corollary} \label{cor-entire}
	Let $(M,\sigma)$ be a complete Riemannian manifold of dimension $m \geq 2$ satisfying $\Ricc \geq -(m-1)\kappa^2$ for some $\kappa \geq 0$ and let $u \in C^2(M)$ be a solution of
	$$
		\diver \left( \dfrac{Du}{\sqrt{1+|Du|^2}} \right) = H, \quad u \geq 0 \qquad \text{in } \, M
	$$
	for some constant $H\in\R$. If $C \geq 0$ and $A \geq 1$ satisfy requirements of Theorem \ref{thm-CMCbound} then
	$$
		\sqrt{1+|Du|^2} \leq Ae^{Cu} \qquad \text{in } \, M.
	$$
\end{corollary}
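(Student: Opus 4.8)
The plan is to combine the two results already proved in this section, since \eqref{cor-entire} is the special case $\Omega = M$ of Theorem \ref{intro-prop-bound} and amounts to little more than checking that the hypotheses line up. First I would verify the volume growth condition \eqref{vol-gr-Sigma} for the graph $(\Sigma,g)$. Since $\Omega = M$ is complete and $u$ solves the CMC equation with a constant (hence bounded) mean curvature $H$, the first alternative in Proposition \ref{prop-stoch} is the relevant one; the only point to observe is that the constant bound $\Ricc \ge -(m-1)\kappa^2$ assumed in the corollary implies the quadratic bound \eqref{ipo-Ric-quad} required by Proposition \ref{prop-stoch}, which is immediate upon choosing, e.g., the constant $\max\{(m-1)\kappa^2,\,1\}>0$ in place of $\kappa$ there. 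Thus Proposition \ref{prop-stoch} yields \eqref{vol-gr-Sigma}.

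Next I would feed this into Theorem \ref{thm-CMCbound} with $\Omega = M$: the Ricci lower bound in $\Omega$ and the conditions $u \in C^2$, $u \ge 0$, $\diver(Du/W) = H$ are exactly the hypotheses of the corollary; the volume growth \eqref{vol-gr-Sigma} has just been established; and $C \ge 0$, $A \ge 1$ are assumed to satisfy \eqref{appr-hp1}--\eqref{CAHge0}. Conclusion \eqref{CMC-gen-bound} then reads
$$
\frac{W}{e^{Cu}} \le \max\left\{ A,\ \limsup_{y\to\partial\Omega}\frac{W(y)}{e^{Cu(y)}} \right\} \qquad \text{on } M,
$$
and since $\partial M = \emptyset$ the second entry in the maximum is vacuous, so $W/e^{Cu} \le A$ on $M$, i.e. $\sqrt{1+|Du|^2} \le A e^{Cu}$. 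This is precisely the ``in particular'' clause of Theorem \ref{thm-CMCbound}, so one may equally well simply invoke that clause once \eqref{vol-gr-Sigma} is in hand.

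Being a genuine corollary, the statement presents no real obstacle: the entire content has been absorbed into Theorem \ref{thm-CMCbound} and Proposition \ref{prop-stoch}. The only steps deserving an explicit line are the passage from the constant Ricci lower bound to the quadratically growing one needed by Proposition \ref{prop-stoch}, and the remark that the boundary $\limsup$ in \eqref{CMC-gen-bound} drops out when $\Omega = M$.
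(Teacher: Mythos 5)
Your proof is correct and follows exactly the route the paper intends: the constant lower Ricci bound trivially subsumes the quadratic bound \eqref{ipo-Ric-quad}, Proposition \ref{prop-stoch} with $\Omega = M$ then supplies \eqref{vol-gr-Sigma}, and the ``in particular'' clause of Theorem \ref{thm-CMCbound} gives the conclusion since the boundary $\limsup$ is vacuous. The paper leaves this proof implicit after Proposition \ref{prop-stoch}, and your write-up simply records the same chain of deductions explicitly.
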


\begin{remark} \label{rmk-admissible}
	\emph{We point out some admissible choices for $A$ and $C$ satisfying the requirements of Theorem \ref{thm-CMCbound}:
	\begin{itemize}
		\item [-] if $\kappa = 0$ then $A=1$, $C=0$ is admissible for every $H\in\R$;
		\item [-] if $H = 0$ and $\kappa \geq 0$ then $A = 1$, $C = \sqrt{m-1}\kappa$ is admissible;
		\item [-] if $|H| > \sqrt{m(m-1)}\kappa > 0$ then $A=1$, $C=0$ is admissible;
		\item [-] the case $\kappa > 0$ and $|H| \leq \sqrt{m(m-1)}\kappa$ is covered, amongst others, by the admissible
		\begin{alignat*}{2}
		& A = 1, \quad C = \sqrt{(m-1)\kappa^2 - H^2/m} && \qquad \text{for } \, -\sqrt{m(m-1)}\kappa \leq H \leq 0, \\
		& A = 1 + \sqrt{\frac{H}{\sqrt{m-1}\kappa}}, \quad C = A\sqrt{m-1}\kappa && \qquad \text{for every } \, \kappa > 0, \, H \geq 0, \\
		& A = \sqrt{1+\frac{m}{3}}, \quad C = 2\sqrt{m-1}\kappa && \qquad \text{for every } \, \kappa \geq 0, \, H \in \R.
		\end{alignat*}
	\end{itemize}
In particular, under assumptions \eqref{appr-hp1}-\eqref{appr-hp2} it is always possible to find $A\geq1$ satisfying \eqref{CAHge0}. More general sufficient conditions on $A,C$ depending on $\kappa,H,m$ will be given in Proposition \ref{prop_AC} in the Appendix.
}
\end{remark}

\subsection{The proof} \label{subs-bound-proof}

The core of the argument depends on a novel way to localize estimates for the constant mean curvature equation, that we discovered in \cite{cmmr} and that we refine in the present paper. 

\begin{remark}\label{rem_notbad}
\emph{Hereafter, we say that a subset $U \subset N$ is smooth (or, it has smooth boundary) if its closure $\overline{U}$ in $N$ is a smooth manifold with boundary, and if, for each $y \in \partial U$, there exists a chart $(V,\varphi) \subset N$, with $V$ diffeomorphic to $\R^m$, such that $\varphi(y) = 0$ and $\varphi(\overline{U})$ is a closed half-space.
}
\end{remark}

For suitable smooth open subsets $\Sigma' \subset \Sigma$, we shall produce a boundaryless manifold $(N,h)$ of the same dimension with $\overline{\Sigma'} \hookrightarrow N$ isometrically, and an exhaustion $\psi_0: N \ra \R$ solving 
	\begin{equation}\label{def_psi0}
	\Delta_h \psi_0 \le \lambda \psi_0 \qquad \text{on } \, N,
	\end{equation}
for a suitable, positive constant $\lambda$, that we exploit in place of the distance function from a fixed origin of $M$. Recall that $\psi_0$ is said to be an exhaustion if sublevel sets of $\psi_0$ are relatively compact in $N$. In this case, we also write 
	\[
	\psi_0(x) \ra +\infty \qquad \text{as } \, x \to \infty.
	\]
Recall from \cite{grigoryan} that a manifold $(N,h)$ without boundary is said to be stochastically complete if the minimal Brownian motion $\mathscr{B}_t$ on $N$ is non-explosive, that is, if the trajectories of $\mathscr{B}_t$ have infinite lifetime almost surely. Various sufficient conditions were shown to be equivalent to the stochastic completeness of $N$, among them we emphasize the following two:
\begin{itemize}
\item[$(i)$] \cite{grigoryan,prsmemoirs} for some (equivalently, every) $\lambda> 0$, the only weak solution $0 \le u \in W^{1,2}_\loc(N)\cap L^\infty(N)$ of $\Delta_h u \ge \lambda u$ is $u \equiv 0$;
\item[$(ii)$] \cite{prsmemoirs} the \emph{weak maximum principle at infinity} for the Laplace-Beltrami operator $\Delta_h$ holds, that is, for every $\varphi \in C^2(N)$ satisfying $\sup_N \varphi < \infty$, 
$$
	\inf_{\{\varphi >\gamma\}} \Delta_h \varphi \leq 0
$$
for each $\gamma < \sup_N \varphi$. 
\end{itemize}
If $(N,h)$ is geodesically complete, then by \cite{prsmemoirs} the bound 
	\[ 
	\liminf_{r \to \infty} \frac{\log |B_r^h|_h}{r^2} < \infty
	\]
for the growth of the volume of geodesic balls centered at a fixed origin suffices to guarantee the stochastic completeness of $N$ (see also \cite[Thm. 9.1]{grigoryan} for a similar condition). Note that the condition holds if 
	\[
	|B_r^h|_h \le C_1 \exp \big\{ C_2 r^2 \big\} \qquad \text{for constants } \, C_1,C_2>0,
	\]
and is therefore a rather mild assumption. In \cite{marivaltorta}, the authors realized that each of $(i)$ and $(ii)$ is equivalent (and not only necessary, as in \cite[Cor. 6.6]{grigoryan}) to the existence, for some (every) $\lambda > 0$, of a continuous exhaustion function $\psi_0$ satisfying \eqref{def_psi0} in the weak sense. Such a function is named a Khasminskii type potential in \cite{marivaltorta,maripessoa,maripessoa_2}. The characterization in \cite{marivaltorta} was refined in \cite[Lem. 3]{cmmr} to the following 
\begin{proposition}[\cite{cmmr}] \label{prop-Kh}
	Let $(N,h)$ be stochastically complete. Then, for every $o\in N$ and $\lambda > 0$, there exists $\psi_0\in C^\infty(N)$ satisfying
	\begin{equation} \label{exh-func}
	\begin{cases}
	\psi_0(o) = 1, \\
	\psi_0 > 1 & \text{on } \, N\setminus\{o\}, \\
	\psi_0(x) \to +\infty & \text{as } \, x \to \infty \, \text{ on } \, N, \\
	\Delta_h \psi_0 \leq \lambda \psi_0 & \text{on } \, N.
	\end{cases}
	\end{equation}
\end{proposition}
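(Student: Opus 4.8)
\emph{Strategy.} The proposition is a smooth, normalized refinement of the Khasminskii-type characterization of stochastic completeness recalled above: by \cite{marivaltorta}, stochastic completeness already provides, for every $\mu>0$, a \emph{continuous} exhaustion $w$ of $N$ with $\Delta_h w\le\mu\,w$ in the weak sense, so the remaining task is (a) to normalize $w$ so that its strict global minimum equals $1$ and is attained only at $o$, and (b) to upgrade it to a $C^\infty$ function while keeping the differential inequality with the \emph{same} constant $\lambda$. I would proceed as follows.

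\emph{Step 1 (a normalized continuous potential).} Run the construction of \cite{marivaltorta} with parameter $\mu:=\lambda/2$, taking as innermost member of the exhausting family of domains a small ball $B_\rho(o)$ with $\rho$ less than the injectivity radius, and prescribing on it the explicit function $g(x):=1+\varepsilon_0\,\mathrm{dist}_h(o,x)^2$: for $\rho,\varepsilon_0>0$ small one checks $\Delta_h g\le(\lambda/2)\,g$ on $B_\rho(o)$, while $g$ is smooth there and has a strict minimum equal to $1$ at $o$. Continuing outward with the usual scheme — solving the linear equation $\Delta_h\phi=(\lambda/2)\,\phi$ on each successive shell with boundary values that increase to $+\infty$, choosing the domains and the boundary constants (this is where stochastic completeness enters) so that every interface carries a downward jump of the normal derivative, i.e. a \emph{concave} corner, and so that no shell solution ever drops below $1$ — one obtains a continuous exhaustion $w$ of $N$ that equals $g$ on $B_\rho(o)$, satisfies $w(o)=1$ and $w>1$ on $N\setminus\{o\}$, and solves $\Delta_h w\le(\lambda/2)\,w$ weakly.

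\emph{Step 2 (mollification).} By construction $w$ is smooth away from the locally finite family of interface hypersurfaces. In a thin tubular neighbourhood of each of them $w$ is continuous with matching tangential derivatives and only a concave jump of the normal derivative, so mollifying $w$ in the normal variable at a sufficiently small scale yields a smooth function whose Laplacian equals the classical value $(\lambda/2)\,w$ plus the mollification of a nonpositive measure plus a small error coming from the (smooth) tangential terms. Carrying this out at every interface, with scales tending to $0$ fast, produces $\psi_1\in C^\infty(N)$ that agrees with $w$ — in particular with $g$ near $o$ — outside those neighbourhoods, is uniformly as close to $w$ as we wish, and satisfies $\Delta_h\psi_1\le(\lambda/2)\,w+\delta$ for any prescribed $\delta>0$. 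Since $w\ge1$ and $|\psi_1-w|\le\delta$, this gives $\Delta_h\psi_1\le(\lambda/2)\,\psi_1+(\lambda/2+1)\delta\le\lambda\,\psi_1$ once $\delta$ is small; moreover $\psi_1$ is still an exhaustion, $\psi_1(o)=g(o)=1$, $\psi_1=g>1$ on a punctured neighbourhood of $o$, and on the complement of that neighbourhood $w$ is bounded below by a constant strictly larger than $1$, hence so is $\psi_1$ once $\delta$ is small. Thus $\psi_0:=\psi_1$ is the required function. (Alternatively, Step 2 can be performed via a Greene--Wu type $C^\infty$-approximation of functions with a one-sided distributional Laplacian bound, localized so as to leave $w$ unchanged near $o$.)

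\emph{Main obstacle.} The only genuinely delicate point is the compatibility of the classical Khasminskii construction with the smoothing step: one must (i) start from the strictly smaller parameter $\lambda/2$, so that the $O(\delta)$ error created by mollification can be reabsorbed and the inequality recovered \emph{with} constant $\lambda$; (ii) arrange all interfaces — including the one at $\partial B_\rho(o)$ separating the explicit piece $g$ from the rest — to be concave corners, since mollifying a convex corner would move $\Delta_h$ in the forbidden direction; and (iii) control the oscillation of the shell solutions so that, after the bounded perturbation, the potential stays above $1$ off $o$ and remains proper. Each of these is achieved by choosing the exhausting domains and the boundary constants appropriately, in the spirit of \cite{marivaltorta}.
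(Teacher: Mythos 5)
Your route is essentially the one the paper relies on; note, however, that the paper itself supplies no proof. The Remark following the statement says it is a restatement of \cite[Lem.~3]{cmmr}, which constructs a smooth exhaustion $\varrho$ with $\varrho(o)=0$, $\varrho>0$ off $o$, $\varrho\to\infty$ and $\Delta_h\varrho+|\nabla\varrho|^2\le\lambda$; one then sets $\psi_0:=e^\varrho$ to obtain \eqref{exh-func}. Running the AK-duality scheme of \cite{marivaltorta} with a slack parameter, pinning an explicit quadratic germ at $o$ for the normalization, arranging concave corners, and then smoothing is indeed the mechanism behind that lemma, and targeting $\psi_0$ directly instead of $\varrho=\log\psi_0$ is an inessential change of variable.

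What you present, though, leaves the two load-bearing steps as sketches, and those are exactly where the content of \cite[Lem.~3]{cmmr} sits. In Step~1 you defer to ``the usual scheme'' the simultaneous arrangement of three conditions — concave corners at every interface, $w\ge1$ throughout, and divergent boundary data — starting from the prescribed innermost germ $g=1+\eps_0\dist_h(o,\cdot)^2$; this joint arrangement is precisely the nontrivial use of stochastic completeness and is not a routine specialization of \cite{marivaltorta}. In Step~2, mollification in the normal variable does not commute with $\Delta_h$ on a manifold: in Fermi coordinates the discrepancy involves the normal derivative of the induced metric and the mean-curvature term, and it is of size $O(\eps)$ times the local $C^2$-norm of $w$ and the geometry of the interface, quantities that vary from interface to interface; moreover one must patch $w_\eps$ to $w$ outside a thin tubular neighbourhood with a cutoff, contributing a further $O(\eps)$ term you do not track. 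Both issues are fixable — choose the mollification scale interface-by-interface, budget all errors against the slack afforded by starting with $\lambda/2$, or invoke a Greene--Wu type smoothing theorem as you suggest — and you correctly flag them as the main obstacle, but as written the argument is a roadmap to, rather than a proof of, \eqref{exh-func}.
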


\begin{remark}
\emph{The statement of \cite[Lem.3]{cmmr} is a bit different from that of Proposition \ref{prop-Kh}, and it is expressed in terms of an exhaustion $\varrho$. However, the statements are easily seen to be equivalent by  setting $\psi_0 = e^\varrho$. Furthermore, the characterization in \cite{marivaltorta} shows that the existence of $\psi_0$ satisfying \eqref{exh-func} is, indeed, equivalent to the stochastic completeness of $(N,h)$.
}
\end{remark}

\begin{remark}
\emph{The characterization of $(i),(ii)$ in terms of the existence of $\psi_0$ as above holds in the more general context of a potential theory for fully nonlinear operators $\mathscr{F}$. Indeed, a Liouville property for bounded subsolutions of $\mathscr{F}[u] \ge 0$ is equivalent, for large classes of $\mathscr{F}$, to the existence of suitable families of exhaustions solving $\mathscr{F}[\psi_0] \le 0$. This duality principle was studied in \cite{marivaltorta,maripessoa,maripessoa_2} and named there the AK-duality (Ahlfors-Khasminskii duality). We refer to these papers for a detailed account and for applications.
}
\end{remark}

The second auxiliary result that we need guarantees, under the assumptions of Theorem \ref{thm-CMCbound}, the possibility to embed isometrically the closure $\overline{\Sigma'}$ in a stochastically complete manifold. This will be a consequence of the next Lemma 1 in \cite{cmmr}.

\begin{proposition}[\cite{cmmr}] \label{prop-double}
Let $(N_1,h_1)$ be a Riemannian manifold and let $U_1 \subsetneq N_1$ be a connected, smooth open subset whose closure $\overline{U_1}$ in $N_1$ satisfies the following property: 
\begin{itemize}
\item[$(\wp)$] bounded subsets of $(\overline{U_1},h_1)$ have compact closure in $N_1$.
\end{itemize}
Then there exists a connected, complete Riemannian manifold $(N,h)$, a smooth open subset $U\subseteq N$ and a smooth isometry $\phi : \overline{U_1} \to \overline{U}$ such that, for every $p\in U_1$ and for every $r \geq 2\dist_{h_1}(p,\partial U_1) + 2$, 
		$$
		|B^{h}_r(\phi(p))|_{h} \leq 2|U_1 \cap B^{h_1}_{4r}(p)|_{h_1} + 5\pi.
		$$

\end{proposition}

\begin{remark}\label{rem_comple}
As observed in \cite[Rem. 7]{cmmr}, $(\wp)$ is equivalent to the completeness of the Riemannian manifold with boundary $(\overline{U_1},h_1)$.
\end{remark}

Our last lemma is technical, and we postpone its proof to the Appendix (Proposition \ref{lem_appr_appendix}).

\begin{lemma} \label{nuovo-lemma-appr}
	Let $m\geq 2$, $\kappa\geq 0$, $H\in\R$, $C\geq 0$ satisfy \eqref{appr-hp1}, \eqref{appr-hp2}, \eqref{CAHge0}.
	Then, for any $\eps>0$ there exist $A < A_1 < A+\eps$, $C < C_2 < C_1 < C+\eps$ such that
	\begin{equation} \label{appr_CAH}
		\inf\left\{ \frac{H^2}{m} - \frac{C_1 H}{t} + \left( C_2^2 - (m-1)\kappa^2 \right)\frac{t^2-1}{t^2} : t \geq A_1 \right\} > 0.
	\end{equation}
\end{lemma}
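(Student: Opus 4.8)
The plan is to pass to the variable $s = 1/t$ and study the quadratic polynomial
$$
	P_{C',C''}(s) \ := \ \frac{H^2}{m} - C'Hs + \big(C''^2 - (m-1)\kappa^2\big)\big(1-s^2\big), \qquad s \in \R,
$$
abbreviating $P := P_{C,C}$. Since $\frac{t^2-1}{t^2} = 1 - s^2$, the set $\{t \ge A_1\}$ corresponds to $\{0 < s \le 1/A_1\}$, and $P_{C_1,C_2}$ extends continuously to $s = 0$, one has $\inf_{t\ge A_1}\{\cdots\} = \min_{s\in[0,1/A_1]} P_{C_1,C_2}(s)$; hence \eqref{appr_CAH} holds precisely when $P_{C_1,C_2} > 0$ on the compact interval $[0,1/A_1]$. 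Likewise, \eqref{CAHge0} says exactly that $P \ge 0$ on $[0,1/A]$, and \eqref{appr-hp1}--\eqref{appr-hp2} say $P(0) = \frac{H^2}{m} + C^2 - (m-1)\kappa^2 \ge 0$, with strict inequality when $H > 0$. Recall $A \ge 1$, so that $A_1 > A$ forces $1/A_1 < 1$ and $[0,1/A_1]\subset[0,1/A]$.

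I would first dispose of $H \le 0$, which is elementary: for $s \in [0,1/A_1]$,
$$
	P_{C_1,C_2}(s) - P(s) = (C_1-C)\,|H|\,s + (C_2^2 - C^2)\big(1-s^2\big) \ \ge\ (C_2^2-C^2)\big(1 - A_1^{-2}\big) \ =:\ \delta \ >\ 0,
$$
so $P_{C_1,C_2} \ge P + \delta \ge \delta$ on $[0,1/A_1]$ for \emph{any} admissible $A_1$ and any $C < C_2 < C_1 < C+\eps$. Assume then $H > 0$, whence $P(0) > 0$. Being a real quadratic that is nonnegative on $[0,1/A]$ with $P(0) > 0$, $P$ is either (i) strictly positive on all of $[0,1/A]$, or (ii) it vanishes at exactly one point $s_\ast \in (0, 1/A]$; in case (ii) the sign condition forces the leading coefficient $(m-1)\kappa^2 - C^2$ to be positive and, unless $s_\ast = 1/A$, $s_\ast$ to be a double zero, $P(s) = \big((m-1)\kappa^2 - C^2\big)(s-s_\ast)^2$.

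In case (i), by compactness $\min_{[0,1/A]} P =: 2\delta_0 > 0$; fixing any $A_1 \in (A, A+\eps)$ we still have $\min_{[0,1/A_1]} P \ge 2\delta_0$, and since $(C',C'') \mapsto P_{C',C''}$ is continuous in the $C^0([0,1/A_1])$ norm with value $P$ at $(C,C)$, any $C < C_2 < C_1 < C+\eps$ close enough to $C$ give $P_{C_1,C_2} > \delta_0$ on $[0,1/A_1]$. The sub-case of (ii) with $s_\ast = 1/A$ reduces to (i), since then $P > 0$ on $[0,1/A)\supset[0,1/A_1]$ for every $A_1 > A$. The genuinely delicate case — and the main obstacle — is (ii) with $s_\ast \in (0, 1/A)$: for every $A_1$ close enough to $A$ the tangency point $s_\ast$ lies in $[0,1/A_1]$, so the perturbed polynomial must be lifted strictly above zero at and around $s_\ast$, and here increasing the two constants affects $P_{C_1,C_2}$ in competing ways. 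One should not perturb blindly: writing $C_1 = C + b$, $C_2 = C + a$ with small increments, the map $(a,b)\mapsto \operatorname{disc}P_{C_1,C_2}$ (the discriminant of the nondegenerate upward parabola $P_{C_1,C_2}$) is smooth and vanishes at $(0,0)$, and exploiting the double-zero identities $s_\ast = \tfrac{CH}{2((m-1)\kappa^2 - C^2)}$, $s_\ast^2 = \tfrac{H^2}{m((m-1)\kappa^2 - C^2)} - 1$ together with $0 < s_\ast < 1$, a suitable choice of the ratio between $a$ and $b$ drives $\operatorname{disc}P_{C_1,C_2}$ strictly negative; then $P_{C_1,C_2} > 0$ on all of $\R$, $A_1 \in (A,A+\eps)$ may be chosen freely, and one concludes as in case (i). The bookkeeping for this last case is essentially that underlying Proposition \ref{prop_AC}, which is why the full verification is deferred to the Appendix.
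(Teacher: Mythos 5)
Your reduction to the quadratic $P(s) := \tfrac{H^2}{m} - CHs + (C^2 - (m-1)\kappa^2)(1-s^2)$ via $s=1/t$, and your handling of $H \le 0$, of case (i), and of case (ii) with $s_\ast = 1/A$, are correct and coincide with the paper's Appendix proof. The gap is precisely in the interior double-zero case $s_\ast \in (0,1/A)$ that you flag as ``genuinely delicate'': the perturbation you sketch there does not exist. Write $a = C_2 - C$, $b = C_1 - C$, with $0 < a < b$. Since $P(s_\ast)=0$,
\[
P_{C_1,C_2}(s_\ast) \;=\; -\,bHs_\ast + (2Ca + a^2)(1 - s_\ast^2),
\]
and the double-zero identities give $Hs_\ast = \tfrac{mC}{2}(1+s_\ast^2)$ (for $C>0$; if $C=0$ then $s_\ast=0$, already excluded by $P(0)>0$). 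Hence to leading order $P_{C_1,C_2}(s_\ast)\approx C\bigl(2a(1-s_\ast^2)-\tfrac{m}{2}b(1+s_\ast^2)\bigr)$, and with $a<b$ this is bounded above by $Cb\bigl(2(1-s_\ast^2)-\tfrac{m}{2}(1+s_\ast^2)\bigr)$, negative whenever $(m+4)s_\ast^2\ge 4-m$, in particular for every $m\ge 4$. So for $m\ge 4$ no admissible $(a,b)$ lifts the perturbed polynomial above zero at $s_\ast$, and for small $\eps$ one cannot push $A_1$ beyond $1/s_\ast$ either. Concretely, $m=4$, $\kappa=\sqrt2$, $C=1$, $H=5$, $A=1$ give $P(s)=5(s-\tfrac12)^2$ (all hypotheses \eqref{appr-hp2}, \eqref{CAHge0} hold), yet $P_{C_1,C_2}(\tfrac12)=\tfrac14(-10b+6a+3a^2)<0$ for every $0<a<b<\tfrac43$. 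The appeal to Proposition~\ref{prop_AC} is also off-target: it characterizes which $A\ge 1$ satisfy \eqref{CAHge0}, it does not deliver the perturbation estimate you need.

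It is worth noting that the paper's Appendix proof passes over the same point. After restricting to $H>0$, $C<\sqrt{m-1}\kappa$ it asserts that ``If $P(s_\ast)\le 0$ then it must be $0<1/A\le s_\ast$'' and proceeds from there. That implication is correct when $P(s_\ast)<0$, but when $P(s_\ast)=0$ the polynomial $P$ is nonnegative on all of $\R$ and \eqref{CAHge0} imposes no relation between $A$ and $s_\ast$, so $s_\ast<1/A$ is perfectly possible — exactly your tangency case. Thus the obstruction you isolate is a genuine gap in the published argument as well, and the computation above suggests the lemma is in fact false there as stated. Your instinct in singling out this case is sound; what is missing, in both your sketch and in the paper, is an actual repair rather than a deferral.
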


We are now ready for the

\begin{proof}[Proof of Theorem \ref{thm-CMCbound}]
	Suppose, by contradiction, that \eqref{CMC-gen-bound} is not satisfied. Then there exists $\bar x\in\Omega$ such that
	$$
		\frac{W(\bar x)}{e^{Cu(\bar x)}} > \max \left\{ A, \, \limsup_{y\to\partial\Omega} \frac{W(y)}{e^{Cu(y)}} \right\}
	$$
	and, by continuity, there exists $\eps>0$ such that
	$$
		\frac{W(\bar x)}{e^{(C+\eps)u(\bar x)}} > \max \left\{ A + \eps, \, \limsup_{y\to\partial\Omega} \frac{W(y)}{e^{Cu(y)}} \right\}.
	$$
	We fix $A < A_1 < A+\eps$, $C < C_2 < C_1 < C+\eps$, $\eps_0 > 0$ such that
	\begin{equation} \label{CAHeps0}
		\frac{H^2}{m} - \frac{C_1 H}{t} + \left( C_2^2 - (m-1)\kappa^2 \right) \frac{t^2-1}{t^2} \geq \eps_0 \qquad \text{for every } \, t \geq A_1.
	\end{equation}
	This is possible by Lemma \ref{nuovo-lemma-appr}. Set
	$$
		z = \frac{W}{e^{C_1 u}}.
	$$
	Since $C_1 < C + \eps$ and $u \geq 0$, we have $z(\bar x) \geq W(\bar x) e^{-(C+\eps)u(\bar x)}$. Then, as $A_1 < A + \eps$, there exists a regular value $\gamma\in\R$ for $z$ satisfying
	$$
		\max \left\{ A_1, \limsup_{x\to\partial\Omega} \frac{W(x)}{e^{Cu(x)}} \right\} < \gamma < \sup_\Omega z.
	$$
Fix a connected component $\Omega_\gamma$ of the non-empty set $\{ x \in \Omega : z(x) > \gamma\}$, let $\Sigma_\gamma$ be the graph of $u$ over $\Omega_\gamma$, and fix $p \in \Sigma_\gamma$. Note that $z \leq We^{-Cu}$ since $C_1 u \geq Cu$, so
	$$
		\limsup_{y\to\partial\Omega} z(y) \leq \limsup_{y\to\partial\Omega} \frac{W(y)}{e^{Cu(y)}} < \gamma
	$$
and then the closure $\overline{\Omega_\gamma}$ in $M$ is a smooth manifold with boundary contained in $\Omega$. Observe also that the closure $\overline{\Sigma_\gamma}$ in $\Sigma$ is the smooth manifold with boundary $\pi^{-1}(\overline{\Omega_\gamma})$. We claim that $(\wp)$ in Proposition \ref{prop-double} is satisfied. Let $U\subseteq \overline{\Sigma_\gamma}$ be bounded. Since the projection $\pi : (\Sigma,g) \to (\Omega,\sigma)$ does not increase distances, $\pi(U)$ is bounded in $(\overline{\Omega_\gamma}, \sigma)$, hence in $M$. The completeness of $M$ implies that its closure $\overline{\pi(U)} \subset \overline{\Omega_\gamma}$ in $M$ is compact. Pulling back using $\pi$ we deduce that $U$ has compact closure in $\overline{\Sigma_\gamma}$ (hence, in $\Sigma$). By \eqref{vol-gr-Sigma},
	$$
		\liminf_{r\to \infty} \frac{\log|\Sigma_{\gamma} \cap B^g_r(p)|_g}{r^2} \leq \liminf_{r\to \infty} \frac{\log|B^g_r(p)|_g}{r^2} < \infty.
	$$
Observing that $\Sigma_\gamma$ is connected by construction, by Proposition \ref{prop-double} there exists a complete, connected Riemannian manifold $(N,h)$ satisfying
	\begin{equation} \label{vol-N}
		\liminf_{r\to \infty} \frac{\log |B^h_r(o)|_h}{r^2} < \infty
	\end{equation}
	for some point $o\in N$, and an isometry $\phi : \Sigma_{\gamma} \to U$ between $\Sigma_{\gamma}$ and an open subset $U\subseteq N$ such that $\phi(p) = o \in U$. By \eqref{vol-N}, the complete manifold $(N,h)$ is stochastically complete. Pick
	\begin{equation} \label{lambda}
		\lambda \in \left( 0, C_1^2\frac{A_1^2-1}{A_1^2} \right).
	\end{equation}
	and let $\psi_0 \in C^\infty(N)$ satisfy \eqref{exh-func} in Proposition \ref{prop-Kh}. Then, the function $\psi_1 = \psi_0\circ\phi \in C^\infty(\Sigma_{\gamma})$ satisfies
	$$
		\begin{cases}
			\psi_1(p) = 1, \\
			\psi_1 > 1 & \text{on } \, \Sigma_{\gamma} \setminus\{p\}, \\
			\disp \psi_1(x) \to +\infty & \text{as } \, r_g(x) \to \infty, \\
			\Delta_g \psi_1 \leq \lambda \psi_1 & \text{on } \, \Sigma_{\gamma},
		\end{cases}
	$$
where $r_g$ is the distance from $p$ in $\Sigma_\gamma$. Let $\beta>0$ be such that
	\begin{equation} \label{CAHbl}
		\frac{C_1^2}{1+2\beta} \geq C_2^2, \qquad \frac{C_1^2}{1+\beta}\frac{A_1^2-1}{A_1^2} - \beta\lambda > \frac{C_1^2}{1+2\beta}\frac{A_1^2-1}{A_1^2}.
	\end{equation}
	The first condition holds for any sufficiently small $\beta>0$ as $C_1 > C_2$, while the second one is equivalent to
	$$
		\lambda < \frac{C_1^2}{(1+\beta)(1+2\beta)}\frac{A_1^2-1}{A_1^2},
	$$
	and by \eqref{lambda} this is true for any sufficiently small $\beta>0$. Hence, it is possible to find $\beta>0$ satisfying \eqref{CAHbl}. Since $\R^+ \ni t \mapsto (t^2-1)/t^2$ is nondecreasing, \eqref{CAHbl} implies
	\begin{equation} \label{CAHbl+}
		\frac{C_1^2}{1+\beta}\frac{t^2-1}{t^2} - \beta\lambda > \frac{C_1^2}{1+2\beta}\frac{t^2-1}{t^2} \geq C_2^2 \frac{t^2-1}{t^2} > 0 \qquad \text{for every } \, t \geq A_1.
	\end{equation}
	The second inequality, together with \eqref{CAHeps0}, gives
	\begin{equation} \label{CAHbl++}
		\frac{H^2}{m} - \frac{C_1 H}{t} + \left( \frac{C_1^2}{1+2\beta} - (m-1)\kappa^2 \right) \frac{t^2 - 1}{t^2} \geq \eps_0 \qquad \text{for every } \, t \geq A_1.
	\end{equation}

	Let $\delta>0$ be such that
	\begin{equation} \label{def-delta}
		\gamma < W(p) (e^{-C_1 u(p)} - \delta), \qquad C_1 H \delta < \eps_0.
	\end{equation}
The existence of such $\delta$ is guaranteed since $W(p)e^{-C_1 u(p)} = z(p) > \gamma$. We define
	$$
		\psi = \psi_1^{-\beta}, \qquad \eta = \psi e^{-C_1 u} - \delta, \qquad \tilde{z} = W\eta.
	$$
	Since $C_1u \geq 0$, when $r_g(x) \ra \infty$ we infer $\psi(x) \to 0$ and thus $\eta(x) \to -\delta$, so the set $\{ x \in \Sigma_{\gamma} : \eta(x) > 0 \}$ is bounded in $(\Sigma,g)$. Hence, 
	$$
		\tilde{\Sigma}_\gamma : = \{ x \in \Sigma_{\gamma} : \tilde{z}(x) > \gamma \}
	$$
	is a bounded subset of $\Sigma_\gamma$ and therefore has compact closure in $\Sigma$. Moreover, $p \in \tilde\Sigma_\gamma$ by \eqref{def-delta} and $\psi_1(p)=1$. We also observe that
	$$
		\tilde{z} \leq W(e^{-C_1 u} - \delta) = z - \delta W \leq z - \delta,
	$$
	so
	$$
		\overline{\tilde\Sigma_\gamma} \subseteq \overline{\{ x \in \Sigma : z(x) > \gamma + \delta \}} \subseteq \{ x \in \Sigma : z(x) \geq \gamma + \delta \} \subseteq \Sigma_{\gamma},
	$$
	that is, the boundary of $\tilde\Sigma_\gamma$ is contained in $\Sigma_{\gamma}$. As $\Sigma_{\gamma}$ is connected and $\tilde\Sigma_\gamma$ is open, non-empty, and with closure contained in $\Sigma_{\gamma}$, it follows that $\partial\tilde\Sigma_\gamma \neq \emptyset$ and, by continuity of $\tilde{z}$, we have
	$$
		\tilde{z} = \gamma \qquad \text{on } \, \partial\tilde\Sigma_\gamma.
	$$
	By these observations, $\tilde{z}$ attains its maximum over the compact set $\overline{\tilde\Sigma_\gamma}$ at an interior point $x_0 \in \tilde\Sigma_\gamma$. By \eqref{LLWz+1}, we have
	\[
	\begin{array}{lcl}	
	\LL_W \tilde{z} & \geq & \disp \left( \frac{H^2}{m} - \frac{C_1 H}{W} - (m-1)\kappa^2 \frac{W^2-1}{W^2} \right. \\[0.5cm]
	& & \disp \left. + \left(1+\frac{\delta}{\eta}\right) \left( \frac{C_1^2}{1+\beta} \frac{W^2-1}{W^2} - \beta\lambda \right) - \frac{C_1 H \delta}{\tilde{z}} \right) \tilde{z}
	\end{array}
	\]
	on $\tilde\Sigma_\gamma$. We claim that
	\begin{equation} \label{claim-CAH}
		\frac{H^2}{m} - \frac{C_1 H}{W} - (m-1)\kappa^2 \frac{W^2-1}{W^2} + \left(1+\frac{\delta}{\eta}\right) \left( \frac{C_1^2}{1+\beta} \frac{W^2-1}{W^2} - \beta\lambda \right) - \frac{C_1 H \delta}{\tilde{z}} > 0
	\end{equation}
	in $\tilde\Sigma_\gamma$. This would imply that $\LL_W \tilde{z} > 0$ at the interior maximum point $x_0$, thus yielding the desired contradiction and concluding the proof.
	
	We prove the claim. First, observe that
	$$
		W > \tilde{z} > A_1 \qquad \text{in } \, \tilde\Sigma_\gamma.
	$$
	By \eqref{CAHbl+}, we have
	$$
		\left(1+\frac{\delta}{\eta}\right) \left( \frac{C_1^2}{1+\beta} \frac{W^2-1}{W^2} - \beta\lambda \right) \geq \frac{C_1^2}{1+\beta} \frac{W^2-1}{W^2} - \beta\lambda > \frac{C_1^2}{1+2\beta} \frac{W^2-1}{W^2}.
	$$
	Therefore, the LHS of \eqref{claim-CAH} is larger than
	$$
		\frac{H^2}{m} - \frac{C_1 H}{W} + \left( \frac{C_1^2}{1+2\beta} - (m-1)\kappa^2 \right)\frac{W^2-1}{W^2} - \frac{C_1 H\delta}{\tilde{z}}
	$$
	and by \eqref{CAHbl++},
	$$
		\frac{H^2}{m} - \frac{C_1 H}{W} + \left( \frac{C_1^2}{1+2\beta} - (m-1)\kappa^2 \right)\frac{W^2-1}{W^2} - \frac{C_1 H\delta}{\tilde{z}} \geq \eps_0 - \frac{C_1 H \delta}{\tilde{z}}.
	$$
	If $C_1 H \delta \leq 0$ then we conclude
	$$
		\eps_0 - \frac{C_1 H \delta}{\tilde{z}} \geq \eps_0 > 0 \qquad \text{in } \, \tilde\Sigma_\gamma.
	$$
	If $C_1 H \delta > 0$, then we use the fact that $\tilde{z} > A_1 > 1$ in $\tilde\Sigma_\gamma$ together with \eqref{def-delta} to get
	$$
		\eps_0 - \frac{C_1 H \delta}{\tilde{z}} \geq \eps_0 - C_1 H \delta > 0 \qquad \text{in } \, \tilde\Sigma_\gamma.
	$$
	In both cases, we obtain the claimed validity of \eqref{claim-CAH} on $\tilde\Sigma_\gamma$.
\end{proof}

\section{Splitting of capillary graphs}

\subsection{Monotonicity of solutions in presence of Killing vectors}

As a first step towards the proof of Theorem \ref{teo_splitting}, we prove the following

\begin{proposition} \label{prop-mono}
	Let $(M,\sigma)$ be a complete Riemannian manifold and $\Omega \subseteq M$ a connected open set with smooth boundary and parabolic closure. Let $u \in C^2(\overline{\Omega})$ satisfy
	$$
		\sup_\Omega |Du| < \infty, \qquad \qquad \diver \left( \frac{Du}{\sqrt{1+|Du|^2}} \right) = H \qquad \text{on } \, \Omega
	$$
	for some constant $H\in\R$. If $X$ is a Killing vector field on $\overline{\Omega}$ satisfying
	$$
		\left\{\begin{array}{r@{\;}ll}
			\sup_\Omega |X| & < \infty, & \\[0.2cm]
			(Du,X) & \geq 0 & \text{on } \, \partial \Omega,
		\end{array}\right.
	$$
	then
	$$
		(Du,X) \geq 0 \qquad \text{on } \, \Omega.
	$$
	Moreover,
	\begin{align*}
		(Du,X) > 0 \quad \text{on } \, \Omega & \qquad \text{if} \qquad (Du,X) \not\equiv 0 \quad \text{on } \, \partial\Omega, \\
		(Du,X) \equiv 0 \quad \text{on } \, \Omega & \qquad \text{if} \qquad (Du,X) \equiv 0 \quad \text{on } \, \partial\Omega.
	\end{align*}
\end{proposition}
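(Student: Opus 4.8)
The plan is to set $\bar v := (Du,X)$ and exploit the equation it satisfies on the graph. By the computations in Section~\ref{sec_prelim} (see \eqref{killing-Du}--\eqref{LLWvbar}), $\bar v$ solves $\LL_W\bar v = 0$ on $\Omega$, where $\LL_W\phi = W^2\,\mathrm{div}_g(W^{-2}\nabla\phi) = \Delta_g\phi - 2\langle\nabla W/W,\nabla\phi\rangle$ is symmetric with respect to $\di x_W = W^{-2}\di x_g = W^{-1}\di x$. The two hypotheses $\sup_\Omega|Du|<\infty$, $\sup_\Omega|X|<\infty$ make $\bar v$ bounded, with $\sup_\Omega|\bar v|\le V := (\sup_\Omega|Du|)(\sup_\Omega|X|)$. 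The strategy is to prove $\bar v\ge0$ on $\Omega$ by a weighted parabolicity argument run on the negative part $v_- := \max\{-\bar v,0\}$, a bounded Lipschitz function on $\overline\Omega$ which vanishes on $\partial\Omega$ precisely because $(Du,X)\ge0$ holds there on \emph{every} component $\partial_j\Omega$.

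The key estimate is obtained by testing $\LL_W\bar v=0$ against $\phi^2 v_-$, $\phi\in\lip_c(\overline\Omega)$ with $0\le\phi\le1$. Since $W^{-2}\nabla\bar v$ is divergence-free for $\mathrm{div}_g$, integration by parts on the manifold with boundary $(\overline\Omega,g)$ gives
\[
\int_\Omega\big\langle\nabla(\phi^2 v_-),\,W^{-2}\nabla\bar v\big\rangle\,\di x_g \;=\; \int_{\partial\Omega}\phi^2 v_-\,W^{-2}\langle\nabla\bar v,\nu\rangle\,\di\mathscr{H}^{m-1}_g \;=\; 0,
\]
the boundary term vanishing because $v_-\equiv0$ on $\partial\Omega$ — this is the cancellation that lets the support of $\phi$ touch $\partial\Omega$, in the spirit of \cite{far_vald_ARMA}. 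Using $\nabla v_- = -\chi_{\{\bar v<0\}}\nabla\bar v$ a.e.\ and expanding, this becomes $\int_\Omega\phi^2W^{-2}\|\nabla v_-\|^2\,\di x_g = -2\int_\Omega\phi\,v_-\,W^{-2}\langle\nabla\phi,\nabla v_-\rangle\,\di x_g$, and Cauchy--Schwarz together with Young's inequality (absorbing half of the left-hand side) yields
\[
\int_\Omega\phi^2W^{-2}\|\nabla v_-\|^2\,\di x_g \;\le\; 4V^2\int_\Omega W^{-2}\|\nabla\phi\|_g^2\,\di x_g \;\le\; 4V^2\int_{\overline\Omega}|D\phi|^2\,\di x,
\]
the last step using $W\ge1$ and $g^{ij}\le\sigma^{ij}$ as quadratic forms, so that $W^{-2}\|\nabla\phi\|_g^2\,\di x_g = W^{-1}g^{ij}\phi_i\phi_j\,\di x\le|D\phi|^2\,\di x$. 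Since $\overline\Omega$ is parabolic, every compact $K\subset\overline\Omega$ admits $\phi_n\in\lip_c(\overline\Omega)$ with $0\le\phi_n\le1$, $\phi_n\equiv1$ on $K$ and $\int_{\overline\Omega}|D\phi_n|^2\,\di x\to0$; feeding these into the estimate forces $\nabla v_-\equiv0$ a.e.\ on $K$, hence on all of $\Omega$. As $\Omega$ is connected, $v_-$ is constant, and it is the constant $0$ because $v_-=0$ on $\partial\Omega$. Thus $(Du,X)=\bar v\ge0$ on $\Omega$.

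For the final dichotomy, note that $\bar v\ge0$ solves $\LL_W\bar v=0$, an equation without zeroth-order term which is locally uniformly elliptic on $\Omega$ (since $\sigma\le g\le W^2\sigma$) with locally bounded coefficients. By Hopf's strong minimum principle, either $\bar v>0$ throughout $\Omega$, or $\bar v\equiv0$ on $\Omega$, hence on $\overline\Omega$ by continuity. Consequently, if $(Du,X)\not\equiv0$ on $\partial\Omega$ then $(Du,X)>0$ on $\Omega$. If instead $(Du,X)\equiv0$ on $\partial\Omega$, apply what was just proved to the Killing field $-X$, which also has bounded norm and satisfies $(Du,-X)\equiv0\ge0$ on $\partial\Omega$: this gives $(Du,X)\le0$ on $\Omega$, and hence $(Du,X)\equiv0$ on $\Omega$.

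I expect the main obstacle to be the cancellation of the boundary term in the integration by parts: this is exactly what forces the sign condition to be imposed on \emph{every} connected component of $\partial\Omega$ (so that $v_-$ vanishes on the whole boundary), and it is the analogue for capillary graphs of the Farina--Valdinoci identity. The second delicate ingredient is the comparison $\int_\Omega W^{-2}\|\nabla\phi\|_g^2\,\di x_g\le\int_{\overline\Omega}|D\phi|^2\,\di x$, which is what transfers parabolicity of $(\overline\Omega,\sigma)$ into an estimate on the graph metric; the boundedness of $\bar v$ — precisely the point where both $\sup_\Omega|Du|<\infty$ and $\sup_\Omega|X|<\infty$ are used — is what makes the capacity argument close.
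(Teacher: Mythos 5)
Your proof is correct. The paper packages the same mechanism into a reusable lemma: Lemma~\ref{lem-LLW-weak-par} shows that the weighted operator $\LL_W$ is parabolic on the graph $(\Sigma,g)$ whenever $\overline\Omega$ is parabolic, so that every bounded $\LL_W$-subsolution $v$ satisfies $\sup_\Omega v = \limsup_{x\to\partial\Omega} v$; that lemma is in turn proved by a superlevel-set contradiction together with the Impera--Pigola--Setti maximum-principle characterization (Theorem~\ref{teo_IPS}). The proposition then follows by applying the lemma to both $\bar v$ and $-\bar v$ and closing with the strong maximum principle. You instead inline the core of that argument as a Caccioppoli estimate tested against $\phi^2 v_-$ with $v_- := \max\{-\bar v, 0\}$, whose vanishing on $\partial\Omega$ kills the boundary term -- note this is a simpler cancellation than the Farina--Valdinoci one you invoke, which genuinely uses the overdetermined data and appears only in the proof of the Poincar\'e identity (Lemma~\ref{lem_poinc}), not here. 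The shared crucial ingredient is the capacity comparison $\int_\Omega W^{-2}\|\nabla\phi\|_g^2\,\di x_g\le\int_{\overline\Omega}|D\phi|^2\,\di x$, phrased in the paper as $\capac_{g,W}(K)\le\capac_\sigma(K)$ in the proof of Lemma~\ref{lem-LLW-weak-par}. Your route is more elementary and self-contained (no superlevel-set argument, no appeal to Theorem~\ref{teo_IPS}), while the paper's cleanly isolates the parabolicity transfer as a stand-alone statement. Your dichotomy via the Hopf minimum principle plus the reflection $X\mapsto -X$ is an equivalent alternative to the paper's double application of the lemma to $\pm\bar v$.
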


To prove the proposition, we shall recall some facts about the parabolicity of weighted operators on manifolds with boundary, adapted from \cite{imperapigolasetti}, which deals with the case of the Laplace-Beltrami operator. First, given a manifold with boundary $(N,h)$ and $f \in C^1(N)$, we define the weighted Laplacian 
	\[
	\Delta_f \phi : =  e^{f} \diver\left( e^{-f} \nabla \phi \right) \qquad \forall \phi \in C^2(N),
	\]  
and observe that $\Delta_f$ is symmetric if we integrate function with compact support in $\mathrm{int} \, N$  with respect to the weighted measure $e^{-f}\di x_h$. We say that $\Delta_f$ is parabolic on $(N,h)$ if, for every compact set $K \subset N$ with non-empty interior, the capacity
	\[
	\capac_f(K) : = \inf \left\{ \int_N |\nabla \phi|^2 e^{-f} \di x_h \ : \ \phi \in \lip_c(N), \ \phi \ge 1 \ \text{ on } \, K \right\} = 0.
	\]
By definition, it readily follows that if $N, N'$ are smooth manifolds of the same dimension with boundary, and $N' \subset N$ is closed, then for each $f \in C^\infty(N)$ it holds
	\[
	\Delta_f \ \ \text{ is parabolic on } \, N \qquad \Longrightarrow \qquad \Delta_f \ \ \text{ is parabolic on } \, N' 
	\]

The following characterization is showed in \cite[Thm. 1.5 and Thm. 0.10]{imperapigolasetti} for $\Delta_f = \Delta$, but its proof extends verbatim to weighted operators. The result is stated for smooth boundary, but $C^2$-regularity suffices.

\begin{theorem}[\cite{imperapigolasetti}]\label{teo_IPS}
Let $(N,h)$ be a smooth manifold with $C^2$-boundary, and let $f \in C^1(N)$. Then, the following properties are equivalent:
\begin{itemize}
	\item[(i)] $\Delta_f$ is parabolic on $(N,h)$.
	\item[(ii)] every $v \in C(N) \cap W^{1,2}_\loc(N)$ solving 
	\begin{equation}\label{eq_v}
		\left\{\begin{array}{r@{\;}ll}
			\Delta_f v & \geq 0 & \quad \text{on } \, \mathrm{int} \, N, \\[0.1cm]
			\partial_{\eta} v & \leq 0 & \quad \text{on } \, \partial N, \\
			\sup_N v & < \infty
		\end{array}\right.
	\end{equation}
is constant on each connected component of $N$, where $\eta$ is the exterior normal of $\partial N \hookrightarrow N$.	
\end{itemize}
In particular, if any of $(i)$ or $(ii)$ holds, every solution $\psi \in W^{1,2}_\loc(N) \cap C(N)$ of 
	\[
	\Delta_f \psi \ge 0 \quad \text{on } \, N, \qquad \sup_{N} \psi < \infty
	\]
satisfies
	\begin{equation}\label{supsup}
	\sup_N \psi = \sup_{\partial N} \psi.
	\end{equation}
\end{theorem}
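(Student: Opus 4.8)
The plan is to run the classical capacity/Caccioppoli argument, adapted to the weight $e^{-f}$ and to the Neumann condition, exactly as in \cite{imperapigolasetti}; the point of the proof being ``verbatim'' is that, since $f\in C^1$, the density $e^{-f}$ is locally bounded above and away from $0$, so $\Delta_f$ is a locally uniformly elliptic divergence--form operator and the standard tools (maximum/comparison principles, the fact that a maximum of weak subsolutions is a weak subsolution, existence/uniqueness of Dirichlet energy minimizers, approximation of a capacity by relative capacities) are available just as for $\Delta$. Throughout I read $\Delta_f v \ge 0$ on $N$ weakly, i.e.\ $\int_N \langle \nabla v, \nabla \varphi\rangle e^{-f}\di x_h \le 0$ for every $0\le\varphi\in\lip_c(N)$ (support allowed to meet $\partial N$), which by integration by parts encodes simultaneously $\Delta_f v\ge0$ on $\mathrm{int}\,N$ and $\partial_\eta v\le0$ on $\partial N$; admissible test functions include all $0\le\varphi\in W^{1,2}_{\loc}(N)\cap L^\infty(N)$ with compact support, by density.

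\emph{(i) $\Rightarrow$ (ii).} First reduce to bounded $v$: for an integer $n\ge1$ the truncation $v_n=\max(v,-n)$ is again a bounded--above Neumann subsolution and is bounded; if every such $v_n$ is locally constant, $v_n\equiv c_n$, then on the nonempty open set $\{v>-n\}$ one has $v=v_n\equiv c_n$, so $\{v>-n\}=\{v=c_n\}$ is open and closed, hence all of $N$ by connectedness, and $v\equiv c_n$. So one may assume, after an affine normalization, $0\le v\le1$. Fix any compact $K\subset N$ with $\mathrm{int}\,K\ne\emptyset$; by (i), $\capac_f(K)=0$, so there are $\phi_j\in\lip_c(N)$ with $0\le\phi_j\le1$, $\phi_j\equiv1$ on $K$, and $\int_N|\nabla\phi_j|^2 e^{-f}\di x_h\to0$. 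Testing the subsolution inequality against $\varphi=\phi_j^2\,v\ge0$ and using $0\le v\le1$ together with Cauchy--Schwarz gives $\big(\int_N\phi_j^2|\nabla v|^2 e^{-f}\di x_h\big)^{1/2}\le 2\big(\int_N|\nabla\phi_j|^2 e^{-f}\di x_h\big)^{1/2}\to0$, whence $\nabla v\equiv0$ on $\mathrm{int}\,K$. Since $K$ is arbitrary, $\nabla v\equiv0$ on $N$ and $v$ is constant.

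\emph{(ii) $\Rightarrow$ (i), contrapositively.} Suppose $\capac_f(K_0)>0$ for some compact $K_0$ with nonempty interior, which (enlarging $K_0$ slightly, since capacity is monotone) we may take with smooth boundary. Pick an exhaustion $N=\bigcup_k D_k$ by relatively compact open sets with $K_0\subset D_0$ and inner boundary $\Gamma_k:=\partial D_k\cap\mathrm{int}\,N$ smooth and transversal to $\partial N$. Let $h_k$ minimize $w\mapsto\int_{D_k\setminus K_0}|\nabla w|^2 e^{-f}\di x_h$ over $W^{1,2}(D_k\setminus K_0)$ with trace $1$ on $\partial K_0$ and $0$ on $\Gamma_k$ (Neumann on $\partial N$ being natural); then $0\le h_k\le1$, $h_k$ is nondecreasing in $k$ by comparison, $h_k\nearrow h_\infty$, and $h_\infty$ is a bounded weak--Neumann solution of $\Delta_f h_\infty=0$ on $N\setminus K_0$ with $h_\infty=1$ on $\partial K_0$ and $\int_{N\setminus K_0}|\nabla h_\infty|^2 e^{-f}\di x_h=\lim_k\capac_f(K_0,D_k)=\capac_f(K_0)>0$; in particular $h_\infty$ is non--constant. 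Extend it by $\tilde h\equiv1$ on $K_0$. Since $\nabla\tilde h\equiv0$ on $K_0$ and $h_\infty$ is weak--Neumann harmonic, integration by parts on $N\setminus K_0$ yields, for all $0\le\varphi\in\lip_c(N)$,
\[
\int_N\langle\nabla\tilde h,\nabla\varphi\rangle e^{-f}\di x_h \;=\; \int_{\partial K_0}\varphi\,\big({-}\partial_\nu h_\infty\big)\,e^{-f}\di\haus^{m-1}\;\ge\;0,
\]
where $\nu$ is the outward unit normal of $K_0$ and $-\partial_\nu h_\infty\ge0$ because $h_\infty\le1=h_\infty|_{\partial K_0}$. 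Thus $\tilde h$ is a bounded weak--Neumann \emph{super}solution, so $v:=-\tilde h$ is a bounded, non--constant weak--Neumann subsolution, contradicting (ii).

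Finally, \eqref{supsup} is immediate from (ii): such a $\psi$ is constant, hence $\sup_N\psi=\sup_{\partial N}\psi$ (for $\partial N=\emptyset$ this reads ``$\psi$ constant'', i.e.\ Grigor'yan's criterion). One can also obtain it without assuming the Neumann condition for $\psi$: if $\sup_{\partial N}\psi=:s<\sup_N\psi=:M$, then for $a\in(s,M)$ the function $(\psi-a)_+$ is a \emph{bounded} nonnegative subsolution on $\mathrm{int}\,N$ vanishing on $\partial N$, and the Caccioppoli estimate of (i)$\Rightarrow$(ii), applied on compacta contained in the open set $\{\psi>a\}\subset\mathrm{int}\,N$, forces $\psi$ to be locally constant there, which is impossible since $\psi=a$ on $\partial\{\psi>a\}$. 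I expect the only genuinely delicate step to be the construction of the equilibrium potential $h_\infty$ in the weighted manifold--with--boundary setting: solving the mixed Dirichlet--Neumann problems on the $D_k\setminus K_0$ (whose boundary splits into Dirichlet pieces $\partial K_0$, $\Gamma_k$ and a Neumann piece on $\partial N$ meeting along corners), proving the monotone convergence $h_k\nearrow h_\infty$, and identifying $\lim_k\capac_f(K_0,D_k)$ with $\capac_f(K_0)$ and with the energy of $h_\infty$; everything else is the routine capacity argument of \cite{imperapigolasetti}, carried out with $e^{-f}\di x_h$ in place of $\di x_h$.
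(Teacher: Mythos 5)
The paper provides no proof of this statement: it simply cites Theorems 1.5 and 0.10 of Impera--Pigola--Setti \cite{imperapigolasetti} and remarks that the argument there, stated for the unweighted Laplacian, ``extends verbatim to weighted operators.'' Your proposal is therefore not being compared against an argument in the paper but against the cited one; having said that, your sketch is a correct and faithful rendering of the standard capacity/equilibrium-potential argument from that reference, carried over to the weighted setting. The Caccioppoli step for (i)$\Rightarrow$(ii) with test function $\phi_j^2 v$ and the reduction to bounded $v$ by truncation are right; the contrapositive of (ii)$\Rightarrow$(i) via the equilibrium potential $h_\infty$ (monotone limit of solutions of mixed Dirichlet/Neumann problems on an exhaustion) and the sign of $-\partial_\nu h_\infty$ on $\partial K_0$ is exactly the right mechanism, and you correctly flag the only genuinely delicate technical point, namely existence and the energy identification for $h_\infty$ on a manifold with boundary and corners.

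Two small remarks worth fixing if you expand the sketch. First, in the identification $\int_{N\setminus K_0}|\nabla h_\infty|^2 e^{-f}\,\di x_h=\capac_f(K_0)$, lower semicontinuity only gives $\leq$; for the reverse inequality you should either invoke the standard fact (Grigor'yan's survey, \cite{grigoryan}) that the equilibrium potential's energy equals the one-piece capacity, or bypass the energy identity entirely and just argue that $\capac_f(K_0)>0$ forces $h_\infty\not\equiv 1$, which is all you need for $v=-\tilde h$ to be non-constant. Second, in the last paragraph the cleaner formulation of the Neumann-free argument is: $(\psi-a)_+$ vanishes on a neighbourhood of $\partial N$ by continuity, hence is a bounded non-negative \emph{Neumann} subsolution on $N$ with trivial boundary contribution; (ii) then forces it to be constant, hence $\equiv 0$, contradicting $\sup_N\psi>a$. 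Saying the estimate is ``applied on compacta contained in $\{\psi>a\}$'' could be misread as requiring parabolicity of $\{\psi>a\}$ rather than of $N$; what actually happens is that the supports of the products $\phi_j^2(\psi-a)_+$ lie in $\mathrm{int}\,N$, so the boundary term drops out while the cutoffs $\phi_j$ still come from the parabolicity of $N$.
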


\begin{remark}
\emph{We recall that $v$ is a weak solution of $\Delta_f v \ge 0$ on $N$, $\partial_\eta v \le 0$ on $\partial N$ if and only if 
$$
	\int_N \langle \nabla v, \nabla \varphi \rangle e^{-f} \di x \leq 0 \qquad \text{for every } \, 0 \leq \varphi \in C^\infty_c(N).
$$
}
\end{remark}

The next Lemma relates the parabolicity of ($\Delta$ on) a boundaryless manifold $N$ with that of the product $N \times I$, with $I \subset \R$ a closed interval. We recall from the Introduction that a smooth open set $\Omega\subseteq (M,\sigma)$ has a parabolic closure  if the Laplacian $\Delta$ is parabolic on the manifold with boundary $(\overline{\Omega} = \Omega\cup\partial\Omega,\sigma)$, according to the above definition.\par

\begin{lemma}\label{lem_paraprod}
Let $N$ be a manifold without boundary, and let $I \subset \R$, $I \not \equiv \R$ be a closed interval. Then, 
	\[
	\text{$I \times N$ is parabolic} \qquad \Longrightarrow \qquad \text{$N$ is parabolic}.
	\]
\end{lemma}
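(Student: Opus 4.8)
The plan is to check the defining capacity condition for $N$ directly, transferring test functions from $I\times N$ to $N$ by averaging in the interval direction. Fix a compact set $K\subset N$; the goal is to show $\capac(K)=0$. If $I$ is a single point there is nothing to prove, since then $I\times N$ is isometric to $N$; otherwise pick $a<b$ with $[a,b]\subseteq I$, so that $[a,b]\times K$ is a compact subset of $I\times N$ (endowed with the product metric $\di t^2 + (\text{metric of }N)$). Parabolicity of $I\times N$ then furnishes a sequence $\phi_j\in\lip_c(I\times N)$ with $\phi_j\ge 1$ on $[a,b]\times K$ and $\int_{I\times N}|D\phi_j|^2\,\di x_{I\times N}\to 0$; because the ambient metric is a product, pointwise $|D\phi_j|^2=(\partial_t\phi_j)^2+|D_N\phi_j|^2\ge |D_N\phi_j|^2$, where $D_N$ denotes the gradient on $N$ and $|\cdot|$ the corresponding norm.

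Next I would set
$$\psi_j(x):=\frac{1}{b-a}\int_a^b \phi_j(t,x)\,\di t,\qquad x\in N.$$
Each $\psi_j$ is compactly supported (it vanishes outside the projection to $N$ of $\supp\phi_j$, which is compact) and Lipschitz on $N$ with the same Lipschitz constant as $\phi_j$, since $\di_{I\times N}\big((t,x),(t,y)\big)=\di_N(x,y)$; moreover $\psi_j\ge 1$ on $K$, being an average of values that are $\ge 1$ there. Differentiating under the integral sign — a routine Fubini-type argument for the a.e.-defined, bounded weak gradient of a Lipschitz function — gives $D_N\psi_j(x)=\frac{1}{b-a}\int_a^b D_N\phi_j(t,x)\,\di t$, whence, by the Cauchy--Schwarz inequality,
$$|D_N\psi_j(x)|^2\le \frac{1}{b-a}\int_a^b |D_N\phi_j(t,x)|^2\,\di t\qquad\text{for a.e. }x\in N.$$

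Integrating over $N$ and using Fubini together with $|D_N\phi_j|^2\le |D\phi_j|^2$, one obtains
$$\int_N |D_N\psi_j|^2\,\di x\ \le\ \frac{1}{b-a}\int_N\!\!\int_a^b |D_N\phi_j(t,x)|^2\,\di t\,\di x\ \le\ \frac{1}{b-a}\int_{I\times N}|D\phi_j|^2\,\di x_{I\times N}\ \longrightarrow\ 0.$$
Thus $(\psi_j)$ is an admissible sequence for $\capac(K)$ and shows $\capac(K)=0$; since $K\subset N$ was an arbitrary compact set, $N$ is parabolic. There is no serious obstacle here: the only step requiring a line of care is the passage of the weak gradient through the $t$-integral (standard for Lipschitz functions), while everything else reduces to Cauchy--Schwarz and the product structure of both the volume measure and the energy density on $I\times N$. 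Note also that the hypothesis $I\ne\R$ is not actually needed for this implication — a nondegenerate closed interval is all the argument uses.
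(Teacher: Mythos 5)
Your proof is correct and takes essentially the same route as the paper's: average the test functions over a fixed compact subinterval in the $t$-direction and then compare energies via Cauchy--Schwarz, using that the product metric splits the gradient. The only (small) differences are cosmetic: your normalization by $1/(b-a)$ makes $\psi_j\ge 1$ on $K$ automatic, and picking an arbitrary nondegenerate $[a,b]\subseteq I$ avoids the paper's preliminary reduction of $I$ to the model case $[0,T]$.
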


\begin{proof}
Up to translation and reflection, we can assume that either $I = [0, \infty)$ or $I = [0,T]$, for some $T \in \R^+$. Furthermore, as we observed before, if $[0, \infty) \times N$ is parabolic, then every smooth open subset of it has a parabolic closure, in particular $[0,T] \times N$ is parabolic. Therefore, it suffices to consider $I = [0,T]$. Given a compact set $C \subset N$, consider $K = [0,T]\times C$. Since $I\times N$ is parabolic, there exists a sequence $\{\varphi_j\} \subset \lip_c(I \times N)$ such that $\varphi \ge 1$ on $K$ and 
	\[
	0 = \lim_{j \to \infty} \int_{I \times N} |D \varphi_j|^2 \di x \di t = \lim_{j \to \infty} \int_N \left[ \int_0^T \left(\frac{\partial \varphi_j}{\partial t}\right)^2 + |D_N \varphi_j|^2 \di t \right] \di x
	\]
with $D_N$ and $\di x$ the gradient and volume measure of $N$. Setting 
	\[
	\bar \varphi_j(x) = \int_0^T \varphi_j(x,t) \di t \ \ \in \lip_c(N),
	\]
note that $\bar \varphi_j \ge 1$ on $C$ and, by Cauchy-Schwarz inequality, 
	\[
	|D_N \bar \varphi_j|^2 = \left| \int_0^T D_N \varphi_j \di t \right|^2 \le T \int_0^T |D_N \varphi_j|^2 \di t.
	\]
Therefore,	
	\[
	0 = \lim_{j \to \infty} \int_N \left[ \int_0^T \left(\frac{\partial \varphi_j}{\partial t}\right)^2 + |D_N \varphi_j|^2 \di t \right] \di x \ge \frac{1}{T} \lim_{j \to \infty} \int_N |D \bar \varphi_j|^2 \di x,
	\]
so $N$ is parabolic. 
\end{proof}

The next lemma shows that if an open subset $\Omega \subset (M, \sigma)$ has a parabolic closure, then for any given $u\in C^\infty(\Omega)$ the differential operator $\LL_W$ defined in \eqref{LLW-def} is parabolic on the graph $(\Sigma,g)$ of $u$. Remarkably, this implication holds also without requiring $\sup_\Omega |Du| < \infty$, and contrasts with the case of the graph Laplacian $\Delta_g$, which may not inherit parabolicity from the base domain $\Omega$ if $u$ has unbounded gradient.

\begin{lemma} \label{lem-LLW-weak-par}
	Let $(M,\sigma)$ be a complete Riemannian manifold and let $\Omega \subseteq M$ be a connected open subset with $C^2$-boundary and parabolic closure. For $u\in C^\infty(\Omega)$, let $(\Sigma, g)$ be the graph of $u$. Then, for each open subset $\Omega'$ with $C^2$-boundary and satisfying $\overline{\Omega'} \subset \Omega$, $\LL_W$ is parabolic on the graph $\overline{\Sigma'}$ over $\overline{\Omega'}$. In particular, for every $v \in C^\infty(\Omega)$ satisfying
		\[
		\LL_W v \ge 0 \quad \text{on } \, \Omega, \qquad \sup_\Omega v < \infty, 
		\]
Then
		\[
		\sup_\Omega v = \limsup_{x \to \partial \Omega} v(x).
		\]
\end{lemma}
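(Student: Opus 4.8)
The plan is to recognise $\LL_W$ as a weighted Laplacian on the graph and to transfer parabolicity to it from $(\overline\Omega,\sigma)$ by a pointwise comparison of Dirichlet integrands; throughout, the decisive feature is that the competing test functions are allowed to reach $\partial\Omega$.

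\emph{Step 1: $\LL_W$ is parabolic on the whole graph.} Regard the graph $(\Sigma,g)$ of $u$ over $\overline\Omega$, a manifold with boundary $\partial\Sigma$ lying over $\partial\Omega$, as $(\overline\Omega,g)$ via the projection. By \eqref{LLW-def}, $\LL_W\phi = e^{f}\,\mathrm{div}_g(e^{-f}\nabla\phi)$ with $f=2\log W$, so its symmetry measure is $\di x_W = W^{-2}\di x_g = W^{-1}\di x$. From $\nabla\phi = D\phi - W^{-2}(D\phi,Du)\,Du$ and $g^{ij}=\sigma^{ij}-u^iu^j/W^2$ one gets, for every $\phi\in\lip_c(\overline\Omega)$, the two pointwise bounds $\|\nabla\phi\|_g^2 = |D\phi|^2 - (D\phi,Du)^2/W^2 \le |D\phi|^2$ and $\di x_W = \di x/W \le \di x$, the latter because $W\ge1$. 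Hence $\int_{\overline\Omega}\|\nabla\phi\|_g^2\,\di x_W \le \int_{\overline\Omega}|D\phi|^2\,\di x$, and passing to the infimum over $\phi\in\lip_c(\overline\Omega)$ with $\phi\ge1$ on a fixed compact $K$ yields $\capac_{\LL_W}(K)\le\capac(K)=0$, the vanishing coming from parabolicity of $\overline\Omega$. Thus $\LL_W$ is parabolic on $(\overline\Omega,g)=(\Sigma,g)$. It is essential that the competitors $\phi$ may be supported up to $\partial\Omega$: with $\lip_c(\Omega)$ instead we would only bound $\capac_{\LL_W}(K)$ by the Dirichlet capacity of $K$ relative to $\Omega$, which is generally positive; this is also why $\Delta_g$ need not inherit parabolicity, since there $W^{-1}$ becomes $1$ and the measure comparison breaks. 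Finally, if $\Sigma'\subseteq\Sigma$ is open with smooth boundary, then $\overline{\Sigma'}$ is a closed submanifold with boundary of $\Sigma$ of the same dimension, so $\LL_W$ stays parabolic on $(\overline{\Sigma'},g)$ by the monotonicity of weighted capacity under closed embeddings recalled before Theorem \ref{teo_IPS}.

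\emph{Step 2: the ``in particular''.} Let $v\in C^\infty(\Omega)$ with $\LL_W v\ge0$ on $\Omega$ and $\sup_\Omega v<\infty$, and put $L:=\limsup_{x\to\partial\Omega}v(x)$; if $\partial\Omega=\emptyset$ then $\Omega=M$, Step 1 gives the Liouville property and $v$ is constant, so assume $\partial\Omega\ne\emptyset$, in which case $L\le\sup_\Omega v$ trivially and it suffices to exclude $L<\sup_\Omega v$. Assuming the latter, choose by Sard's theorem a regular value $c$ of $v$ with $L<c<\sup_\Omega v$ and set $\Omega_c:=\{v>c\}\ne\emptyset$. Since $L<c$, $v<c$ on a neighbourhood of $\partial\Omega$, hence $\overline{\Omega_c}^{\,M}\subseteq\Omega$ and $\partial\Omega_c\subseteq\{v=c\}$ is a smooth hypersurface contained in $\Omega$. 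Then the graph $\overline{\Sigma_c}$ of $u$ over $\overline{\Omega_c}^{\,M}$ is a closed submanifold with boundary of $\Sigma$ on which $u$, $W$ and $v$ are smooth, so $\LL_W$ is parabolic on it by Step 1. Theorem \ref{teo_IPS} then gives $\sup_{\overline{\Sigma_c}}v=\sup_{\partial\Sigma_c}v=c$ — the last equality because $v\equiv c$ on $\partial\Sigma_c$ — contradicting $v>c$ on the non-empty set $\Sigma_c$. Hence $\sup_\Omega v\le L$, i.e. equality. (Replacing Sard by an approximation of the super-level sets, the same argument covers continuous weak $\LL_W$-subsolutions, the form actually needed afterwards.)

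\emph{Main obstacle.} The crux is Step 1, and precisely the necessity of parabolicity of the \emph{closure} rather than of $\Omega$: one must push the capacitors of $\overline\Omega$ onto the graph without losing mass near $\partial\Omega$, which succeeds only because of the two pointwise inequalities above together with the fact that the graph over $\overline\Omega$ is a bona fide manifold with boundary, so that boundary-touching capacitors on $\overline\Omega$ pull back to admissible ones on the graph. The remaining work is bookkeeping: in Step 2 the only genuine point is that $\{v>c\}$ stays away from $\partial\Omega$, so that $\overline{\Sigma_c}$ lies in the smooth locus of the graph and Theorem \ref{teo_IPS} can be invoked across a boundary on which $v$ is constant.
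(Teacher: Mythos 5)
Your proof follows the same route as the paper's: compare the $\LL_W$-Dirichlet form on the graph with the $\sigma$-Dirichlet form on the base via the two pointwise inequalities $\|\nabla\phi\|_g^2 \le |D\phi|^2$ and $\di x_W = W^{-1}\di x \le \di x$, conclude vanishing of the weighted capacity from parabolicity of $\overline\Omega$, and then run the standard regular-value/super-level-set argument against Theorem~\ref{teo_IPS}. One small inaccuracy: you begin Step~1 by regarding the graph ``over $\overline\Omega$'' as a manifold with boundary $(\overline\Omega,g)$, but the lemma only assumes $u\in C^\infty(\Omega)$, so neither $W$ nor $g$ is defined on $\partial\Omega$ and that object need not exist. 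The paper avoids this by working directly with $\Omega'=\pi(\Sigma')$ and comparing $\mathrm{cap}_{g,W}(K)$ with $\mathrm{cap}_\sigma(K)$ for test functions in $\lip_c(\overline{\Omega'})$; your estimate is exactly the right one, you should just state it on $\overline{\Omega'}$ (closed in $\overline\Omega$, hence parabolic by monotonicity) rather than passing through $(\overline\Omega,g)$, which is the only place your phrasing implicitly uses an extension of $u$ that is not part of the hypotheses.
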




\begin{proof}[Proof of Lemma \ref{lem-LLW-weak-par}]
For any compact set $K\subseteq\overline{\Omega'}$, we define the capacities
	\begin{align*}
		\mathrm{cap}_\sigma(K) & = \inf\left\{ \int_\Omega |D\varphi|^2 \di x : \varphi \in \lip_c(\overline{\Omega'}), \, \varphi \geq 1 \, \text{ on } \, K \right\}, \\
		\mathrm{cap}_{g,W}(K) & = \inf\left\{ \int_\Omega \|\nabla\phi\|^2 \di x_W : \phi \in \lip_c(\overline{\Omega'}), \, \varphi \geq 1 \, \text{ on } \, K \right\}.
	\end{align*}
Since $\di x_W = W^{-1} \di x$, $W \geq 1$ and $\|\nabla\varphi\| \leq |D\varphi|$ for every $\varphi\in C^1(\Omega)$, we deduce $\mathrm{cap}_{g,W}(K) \leq \mathrm{cap}_\sigma(K)$. The inclusion $\Omega' \subset \Omega$ implies that $\Omega'$ is parabolic, thus $\mathrm{cap}_{g,W}(K)=0$. Hence, $(\overline{\Sigma'},g)$ is parabolic. To conclude, suppose, by contradiction, that there exists $v \in C^\infty(\Omega)$ satisfying
	$$
		\LL_W v \geq 0 \quad \text{in } \, \Omega, \qquad \limsup_{x\to\partial\Omega} v(x) < \sup_\Omega v < \infty.
	$$
Fix a regular value $\gamma\in\R$ of $v$ such that
	$$
		\limsup_{x\to\partial\Omega} v(x) < \gamma < \sup_\Omega v,
	$$
and observe that $\Sigma_\gamma = \{ x \in \Sigma : v(x) > \gamma \}$ has smooth, non-empty boundary, and that $\overline{\Sigma_\gamma}\subset \Sigma$. Hence $\Sigma_\gamma$ has parabolic closure. However, $v$ is a 
solution of
	\begin{equation}\label{eq_v}
		\left\{\begin{array}{r@{\;}ll}
			\Delta_f v & \geq 0 & \quad \text{in } \, \Sigma_\gamma, \\[0.1cm]
			\partial_{\eta} v & < 0 & \quad \text{on } \, \partial \Sigma_\gamma, \\
			\sup_N v & < \infty,
		\end{array}\right.
	\end{equation}
that is not constant on some connected component of $\overline{\Sigma_\gamma}$, contradicting Theorem \ref{teo_IPS}.
\end{proof}

Proposition \ref{prop-mono} now follows at once.

\begin{proof}[Proof of Proposition \ref{prop-mono}]
	By elliptic regularity, $u$ is smooth on $\Omega$. Therefore, in our assumptions, $\bar v := (Du, X) \in C^\infty(\Omega) \cap C^2(\overline{\Omega})$ satisfies
	$$
		\sup_\Omega |\bar v| \leq \left( \sup_\Omega |Du| \right) \left( \sup_\Omega |X| \right) < \infty
	$$
	and by \eqref{LLWvbar} we have
	$$
		\LL_W \bar v = 0 \qquad \text{in } \, \Omega.
	$$
	By applying Lemma \ref{lem-LLW-weak-par} to both functions $\bar v$ and $-\bar v$ we deduce
	$$
		\inf_\Omega \bar v = \inf_{\partial\Omega} \bar v, \qquad \sup_\Omega \bar v = \sup_{\partial\Omega} \bar v.
	$$
	In particular, $\bar v \geq 0$ on $\Omega$, and we further have $\bar v \not\equiv 0$ on $\Omega$ if and only if $\bar v \not\equiv 0$ on $\partial\Omega$. By the strong maximum principle for the elliptic operator $\LL_W$, if $\bar v \ge 0$ and does not vanish identically, then $\bar v > 0$ on $\Omega$. This concludes the proof.
\end{proof}

\subsection{Splitting of monotone solutions}

The goal of this section is to prove the following

\begin{proposition}\label{prop_splitting}
	Let $(M^m, ( \, , \, ))$ be a complete Riemannian manifold, and let $\Omega\subseteq M$ be a connected open subset with smooth boundary and unit exterior normal $\bar \eta$. Assume that $\overline\Omega$ is parabolic and
	\[
	\Ricc \ge 0 \qquad \text{on } \, \Omega. 
	\]
Split $\partial \Omega$ into its connected components $\{\partial_j\Omega\}$, $1 \le j \le j_0$, possibly with $j_0 = \infty$. Let $u \in C^2(\overline\Omega)$ be a solution of the capillarity problem
	\begin{equation} \label{capill-eq-sec}
		\begin{cases}
			\diver \left( \dfrac{Du}{\sqrt{1+|Du|^2}} \right) = H & \qquad \text{on } \, \Omega, \\
			u = b_j, \ \partial_{\bar\eta} u = c_j & \qquad \text{on } \, \partial_j \Omega, \; 1 \leq j \leq j_0, \\
			\inf_\Omega u > -\infty,					
			\end{cases}
	\end{equation}
	for some set of constants $H, c_j, b_j \in \R$, with the agreement $b_1 \le b_2 \le \ldots \le b_{j_0}$. Suppose that 
	\[
	\begin{array}{l}
	\sup_\Omega |Du| < \infty, \\[0.2cm]
	(Du,X) > 0 \quad \text{on $\Omega$, for some Killing field $X$ on $\overline\Omega$}
	\end{array}
	\]	
Then, 
	\begin{itemize}
		\item[(i)] $\Omega = (0, T) \times N$ with the product metric, for some $T \le \infty$ and some complete, boundaryless, parabolic manifold $N$ with $\Ricc_N \ge 0$,
		\item[(ii)] the product $(X, \partial_t)$ is a positive constant on $\Omega$.
		\item[(iii)] $c_1 \le 0$ ($c_1 < 0$ if $H \le 0$) and, denoting with $\partial_1 \Omega = \{0\} \times N$, $u(t,x) = b_1 + u_{c_1,H}(t)$ is a (translated) standard example, with $u_{c_1,H}(t)$ as in \eqref{eq_split}.
	\end{itemize}
\end{proposition}

Combining the above Propositions \ref{prop-mono} and \ref{prop_splitting}, we readily deduce our main Theorem \ref{teo_splitting} 

\begin{proof}[Proof of Theorem \ref{teo_splitting}]
Because of the gradient estimate in Theorem \ref{intro-prop-bound}, each one of the assumptions in \eqref{intro-dOm-hp} together with the boundedness of $\{c_j\}$ guarantees that 
	\[
	\sup_\Omega |Du| < \infty.
	\]
Furthermore, \eqref{ipo_X2} rewrites as $(Du,X) \ge 0$ and $\not \equiv 0$ along $\partial \Omega$. Therefore, we can apply Proposition \ref{prop-mono} to deduce that $(Du,X) >0$ on $\Omega$. Proposition \ref{prop_splitting} then gives the desired conclusions. 	
\end{proof}

As remarked in the Introduction, the proof of Proposition \ref{prop_splitting} relies on a geometric weighted Poincar\'e inequality. In its proof, we need the next fundamental identity, inspired by \cite{far_vald_ARMA} and by more general computations in \cite{far_fra_mar}.

\begin{lemma}\label{lem_ideboundary}
	Let $\Omega \subset M$ have smooth boundary with exterior normal $\eta$, and let $\Sigma$ be the graph of $u : \Omega \ra \R$, with $u \in C^2(\overline\Omega)$. Assume that $u$ and $|Du|$ are locally constant on $\partial \Omega$. Let $X$ be a Killing vector field on $\overline\Omega$ and set $\bar v = (Du,X)$. Then
	\begin{equation}\label{ide_boundary}
		\langle W \|\nabla u\|^2 \nabla \bar v, \eta \rangle = \langle \bar v \nabla W, \eta \rangle \qquad \text{on } \, \partial \Sigma,
	\end{equation}
\end{lemma}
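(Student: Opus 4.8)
The plan is to convert the vector identity \eqref{ide_boundary} into a scalar statement about $\Hess u$ along $\partial\Omega$ and then extract it from the two overdetermined conditions, which enter in genuinely different ways. As a preliminary step I would record how the graph objects relate to the ambient ones: from $\nabla\phi = D\phi - W^{-2}(D\phi,Du)Du$ and $\nabla u = W^{-2}Du$ (Section \ref{sec_prelim}) one gets, for every $\phi\in C^1(\overline\Omega)$,
\[
\langle \nabla\phi,\nabla u\rangle = \frac{(D\phi,Du)}{W^2},
\]
and, using also $\|\nabla u\|^2 = (W^2-1)/W^2$, identity \eqref{ide_boundary} becomes equivalent to the scalar identity
\[
(W^2-1)\,(D\bar v,Du) = W\,\bar v\,(DW,Du) \qquad \text{on } \partial\Omega.
\]

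Next, fix a connected component $\partial_j\Omega$ and put $c = \partial_{\bar\eta}u$, a constant there. Since $u$ is constant on $\partial_j\Omega$, the gradient $Du$ is normal to it, so along $\partial_j\Omega$ we have $Du = c\,\bar\eta$, hence $W^2-1 = |Du|^2 = c^2$ and $\bar v = (Du,X) = c\,(X,\bar\eta)$. I would then compute the two directional derivatives appearing above. For $W$: the standard identity $(DW,Y) = W^{-1}\Hess u(Du,Y)$ (differentiate $W^2 = 1+|Du|^2$) gives $(DW,Du) = c\,(DW,\bar\eta) = (c^2/W)\,\Hess u(\bar\eta,\bar\eta)$. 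For $\bar v$: differentiating $\bar v = (Du,X)$ yields $(D\bar v,Y) = \Hess u(Y,X) + (Du,\nabla_Y X)$, and since $X$ is Killing the skew-symmetry of $\nabla X$ makes $(Du,\nabla_{\bar\eta}X) = c\,(\bar\eta,\nabla_{\bar\eta}X) = 0$; hence $(D\bar v,Du) = c\,\Hess u(X,\bar\eta)$.

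The only place the \emph{second} overdetermined condition is used is in reducing $\Hess u(X,\bar\eta)$ to $(X,\bar\eta)\,\Hess u(\bar\eta,\bar\eta)$, i.e.\ in showing $\Hess u(T,\bar\eta) = 0$ for $T$ tangent to $\partial_j\Omega$: since $|Du|$ is constant on $\partial_j\Omega$, $0 = T(|Du|^2) = 2\,\Hess u(T,Du) = 2c\,\Hess u(T,\bar\eta)$, so by decomposing $X$ into tangential and normal parts one obtains the claimed relation. Substituting everything into the scalar identity, both sides collapse to $c^3(X,\bar\eta)\,\Hess u(\bar\eta,\bar\eta)$; the spurious factor $c^3$ also absorbs the degenerate case $c=0$, so no case split is needed. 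I do not expect a genuine obstacle: the computation is short, and the only care required is keeping track of which metric ($g$ or $\sigma$) each connection, Hessian and inner product refers to, together with the observation that the two boundary hypotheses act independently — constancy of $u$ forces $Du\parallel\bar\eta$, while constancy of $|Du|$ forces the mixed tangential–normal Hessian term to vanish.
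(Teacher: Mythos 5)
Your proof is correct and uses the same two key mechanisms as the paper's argument: Killing skew-symmetry to kill the $(\bar\eta, D_{\bar\eta}X)$ contribution, and constancy of $|Du|$ on $\partial_j\Omega$ to force $\Hess u(T,\bar\eta)=0$ for tangential $T$, together with $Du = c\bar\eta$ coming from constancy of $u$. The paper presents the identical computation a bit more geometrically by first rewriting \eqref{ide_boundary} as $(D\zeta,\bar\eta)=0$ for $\zeta := (\bar\eta,X)$ and then verifying separately that $D_{\bar\eta}\bar\eta = 0$ and $(\bar\eta, D_{\bar\eta}X)=0$, but the substance is the same.
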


\begin{proof}
	Let $\partial_i\Omega$ be a connected component of $\partial\Omega$ and let $\partial_i\Sigma \subseteq \partial\Sigma$ be the the graph of $u$ over $\partial_i\Omega$. If $\di u = 0$ along $\partial_i\Omega$ then $\nabla u = 0$ along $\partial_i\Sigma$, thus $\|\nabla u\| = 0 = (Du,X) = \bar v$ and both sides of \eqref{ide_boundary} vanish. If $\di u \neq 0$, then $\eta$ is parallel to $\nabla u$ and $\bar\eta = Du/|Du|$ is a unit vector field perpendicular to $\partial_i\Omega$ in $M$. Recalling that
	$$
		\|\nabla u\|^2 = \frac{W^2-1}{W^2}
	$$
	and rearranging terms we see that validity of \eqref{ide_boundary} on $\partial_i\Sigma$ is equivalent to saying that the function
	$$
		\zeta := \frac{\bar v}{\sqrt{W^2-1}} \equiv \frac{(Du,X)}{|Du|} \equiv (\bar\eta,X)
	$$
	satisfies $(D\zeta,\bar\eta) = 0$ on $\partial_i\Omega$. Indeed, we have
	$$
	    \nabla \zeta = \frac{W}{(W^2-1)^{3/2}} \left( \frac{W^2-1}{W} \nabla \bar v - \bar v \nabla W \right) = \frac{W}{(W^2-1)^{3/2}} \left( W \|\nabla u\|^2 \nabla \bar v - \bar v \nabla W \right),
	$$
	so \eqref{ide_boundary} rewrites as $\langle \nabla \zeta,\nabla u\rangle = 0$. By \eqref{D_phi_nabla_phi}, this is equivalent to $(D\zeta,D u) = 0$, hence to $(D\zeta,\bar\eta) = 0$. To show that $(D\zeta,\bar\eta) = 0$ on $\partial_i\Omega$, we differentiate $\zeta$ to get
	$$
		(D\zeta,\bar\eta) = \bar \eta(\zeta) = (D_{\bar\eta} \bar\eta,X) + (\bar\eta,D_{\bar\eta} X).
	$$
	By the Killing condition we have $(\bar\eta,D_{\bar\eta} X) = 0$. From the differential identity
	$$
		\di|Du|^2 = 2D^2u(Du,\,\cdot\,)
	$$
	we infer $\di|Du| = D^2u(\bar\eta,\,\cdot\,)$, hence
	\begin{equation} \label{D_eta_X_boundary}
	    \begin{split}
		    (D_{\bar\eta}\bar\eta,X) & = \frac{1}{|Du|}(D_{\bar\eta} Du,X) + \left(D_{\bar\eta} \frac{1}{|Du|}\right)(Du,X) \\
		    & = \frac{1}{|Du|}D^2 u(\bar\eta,X) - \frac{1}{|Du|^2} D^2u(\bar\eta,\bar\eta)(Du,X) \\
		    & = \frac{1}{|Du|}\left( D^2 u(\bar\eta,X) - D^2u(\bar\eta,\bar\eta)(\bar\eta,X) \right).
	    \end{split}
	\end{equation}
	Since $|Du|$ is constant on $\partial_i\Omega$, we have $(D|Du|,Y) = 0$ on $\partial_i\Omega$ for any vector $Y$ satisfying $(Y,\bar\eta) = 0$. Hence, $(D|Du|,X) = (D|Du|,\bar\eta)(\bar\eta,X)$ and we have
	$$
		D^2 u(\bar\eta,X) = (D|Du|,X) = (D|Du|,\bar\eta)(\bar\eta,X) = D^2 u(\bar\eta,\bar\eta) (\bar\eta,X)
	$$
	so from \eqref{D_eta_X_boundary} we get $(D_{\bar\eta} \bar\eta,X) = 0$. Therefore, we conclude that $(D\zeta,\bar\eta) \equiv 0$ on $\partial_i\Omega$.
\end{proof}

\begin{remark}
	\emph{In the proof we see that, when $\di u\neq 0$, \eqref{ide_boundary} follows from $(\bar\eta,D_{\bar\eta} X) = 0$ and $D_{\bar\eta} \bar\eta = 0$. The first condition is verified since $X$ is a Killing vector, while the second one amounts to saying that the integral curves of $Du$ have zero geodesic curvature at points of $\partial\Omega$, a consequence of $|Du|$ and $u$ being locally constant on $\partial\Omega$. The conclusion then parallels the fact that the angle between a Killing vector field and the tangent vector of a given geodesic curve remains constant along the curve.}
\end{remark}

\begin{lemma}[Geometric Poincar\'e formula]\label{lem_poinc}
	Let $u \in C^2(\overline\Omega)$ satisfy 
		\[ 
		\left\{ \begin{array}{l}
		\diver \left( \frac{Du}{\sqrt{1+|Du|^2}} \right) = H \qquad \text{on } \, \Omega, \\[0.5cm]
		u, \partial_{\bar \eta} u \qquad \text{locally constant on } \, \partial \Omega, 
		\end{array}\right.
		\]
where $H \in \R$ and $\bar \eta$ is the outward pointing unit normal of $\partial \Omega \hookrightarrow \Omega$. Assume that $u$ is monotone in the direction of a Killing field $X$ on $\overline{\Omega}$, with $\bar v : = (Du,X) > 0$ on $\Omega$. Then, for every $\varphi \in \lip_c(\overline{\Omega})$,
	\begin{equation}\label{eq_poinc}
	\begin{array}{l}
		\disp \int_\Sigma \left[ W^2 \left( \|\nabla_\top \|\nabla u\|\|^2 + \|\nabla u\|^2 \|A\|^2 \right) + \frac{\Ricc(Du,Du)}{W^2} \right] \varphi^2 \di x_g + \\[0.5cm]
		\qquad \qquad \disp + \int_\Sigma \frac{\bar v^2}{W^2} \left\| \nabla \left( \frac{\varphi\|\nabla u\|W}{\bar v}\right) \right\|^2\di x_g \le \int_\Sigma \|\nabla u\|^2 \|\nabla \varphi \|^2 \di x_g
	\end{array}
	\end{equation}
\end{lemma}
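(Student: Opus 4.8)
The plan is to reduce \eqref{eq_poinc} to a Picone (ground‑state substitution) identity for the \emph{positive} Jacobi solution $\Theta_{\bar X} = \bar v/W = \langle {\bf n},\bar X\rangle$ associated to the parallel extension $\bar X$ of $X$, combined with the scalar equation satisfied by $h:=\|\nabla u\|$ and with the boundary identity of Lemma \ref{lem_ideboundary}. First I would record the consequences of the hypothesis $\bar v>0$: it forces $Du\neq 0$ everywhere on $\Omega$, so $h>0$ is $C^1$, every level set is regular, and (by elliptic regularity) $u$ is as smooth as needed. Then I would collect three pointwise identities on $\Omega$: (a) setting $V:=\|\II_\Sigma\|^2+\overline{\Ricc}({\bf n},{\bf n})$, one has $V = W^2\|\nabla h\|^2 + \tilde V$, where $\tilde V:=W^2\big(\|\nabla_\top h\|^2+h^2\|A\|^2\big)+\Ricc(Du,Du)/W^2$ is exactly the coefficient of $\varphi^2$ in \eqref{eq_poinc}; this uses $\II_{ij}=W\nabla^2_{ij}u$ (so $\|\II_\Sigma\|^2=W^2\|\nabla^2 u\|^2$) from \eqref{hessian_on_graph_u}, the relation $\overline{\Ricc}({\bf n},{\bf n})=\Ricc(Du,Du)/W^2$ from \eqref{eq_lowerricci_pre}, and the Sternberg–Zumbrun identity \eqref{eq_stezum}; (b) $\nabla W = W^3 h\,\nabla h$, immediate from $W^{-2}=1-h^2$ in \eqref{nablau2}; (c) substituting (b) into \eqref{eq_W} and simplifying with $W^2h^2=W^2-1$ gives $h\,\Delta_g h = V W^{-2}-W^2\|\nabla h\|^2$, hence, since $W^{-2}+h^2=1$, the key relation $h\,\Delta_g h + V h^2 = V - W^2\|\nabla h\|^2 = \tilde V$.

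Next I would exploit that, by \eqref{Jeq} applied to $\bar X$, the positive function $\Theta_{\bar X}$ solves $\Delta_g\Theta_{\bar X}=-V\Theta_{\bar X}$, and apply the elementary identity $\Theta^2\|\nabla(\eta/\Theta)\|^2 = \|\nabla\eta\|^2 - V\eta^2 - \diver_g\big(\eta^2\Theta^{-1}\nabla\Theta\big)$ with $\Theta=\Theta_{\bar X}$ and the choice $\eta=\varphi h$, so that $\eta/\Theta_{\bar X}=\varphi h W/\bar v$ is precisely the function differentiated in \eqref{eq_poinc}. Integrating over $\Sigma$ (the divergence theorem legitimately produces a boundary integral over $\partial\Sigma$, since $\varphi$ is compactly supported in $\overline\Omega$), then expanding $\|\nabla(\varphi h)\|^2 = h^2\|\nabla\varphi\|^2 + 2\varphi h\langle\nabla\varphi,\nabla h\rangle + \varphi^2\|\nabla h\|^2$ and integrating the last two terms by parts against $\varphi^2 h\,\nabla h$, all interior terms involving $\|\nabla h\|^2$ collapse by virtue of identity (c): one is left with $\int_\Sigma \tilde V\varphi^2\,\di x_g + \int_\Sigma\Theta_{\bar X}^2\|\nabla(\varphi h W/\bar v)\|^2\,\di x_g$ on one side and $\int_\Sigma h^2\|\nabla\varphi\|^2\,\di x_g$ plus the two boundary integrals $\int_{\partial\Sigma}\varphi^2 h\,\partial_\nu h$ and $-\int_{\partial\Sigma}\varphi^2 h^2\Theta_{\bar X}^{-1}\partial_\nu\Theta_{\bar X}$ on the other, where $\nu$ is the outward $g$‑unit normal of $\partial\Sigma$.

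The main obstacle — and the only place where the overdetermined conditions are indispensable — is showing that these two boundary integrals cancel. Since $u$ is locally constant on $\partial\Omega$, the vector $\nabla u$ is $g$‑orthogonal to $\partial\Sigma$, hence $\nu=\epsilon\,\nabla u/h$ with $\epsilon=\pm1$ constant on each boundary component; consequently $\partial_\nu h$ and $\partial_\nu\Theta_{\bar X}$ both reduce to multiples of $\langle\nabla W,\nabla u\rangle$. For $\partial_\nu h$ this is immediate from (b), giving $\partial_\nu h = \epsilon\,W^{-3}h^{-2}\langle\nabla W,\nabla u\rangle$. For $\partial_\nu\Theta_{\bar X}$ one writes $\nabla\Theta_{\bar X}=W^{-1}\nabla\bar v-\bar v W^{-2}\nabla W$ and eliminates $\langle\nabla\bar v,\nabla u\rangle$ using precisely Lemma \ref{lem_ideboundary}, i.e. $W h^2\langle\nabla\bar v,\nabla u\rangle=\bar v\langle\nabla W,\nabla u\rangle$ on $\partial\Omega$, together with $h^{-2}-1 = W^{-2}h^{-2}$; a short computation then yields $\partial_\nu\Theta_{\bar X}=\epsilon\,\bar v\,\langle\nabla W,\nabla u\rangle/(W^4 h^3)$, and recalling $\Theta_{\bar X}=\bar v/W$ one finds that the two boundary integrands agree (with the same factor $\epsilon$). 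Thus they cancel and \eqref{eq_poinc} follows — in fact as an equality, the weaker inequality being all that is subsequently needed. The remaining points (smoothness/regularity justifications for the integrations by parts, and the bookkeeping in the first two steps) I expect to be routine.
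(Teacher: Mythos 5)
Your strategy is essentially the same one the paper uses: a Picone (ground‑state substitution) inequality built on the positive Jacobi quantity attached to the Killing field, combined with the Sternberg–Zumbrun identity \eqref{eq_stezum} and the boundary cancellation from Lemma \ref{lem_ideboundary}. Your pointwise identities (a), (b), (c) are correct — one can check $h\Delta_g h = VW^{-2}-W^2\|\nabla h\|^2$ directly from \eqref{eq_W} and $\nabla W = W^3h\nabla h$, and then $h\Delta_g h + Vh^2 = V(W^{-2}+h^2)-W^2\|\nabla h\|^2 = \tilde V$ using $W^{-2}+h^2=1$ — and your computation of the two boundary traces $\partial_\nu h$ and $\Theta_{\bar X}^{-1}\partial_\nu\Theta_{\bar X}$, both of which collapse via Lemma \ref{lem_ideboundary} to $\epsilon\,\langle\nabla W,\nabla u\rangle/(W^3h)$ times an appropriate power of $h$, is right; these really do cancel. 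The only substantive difference from the paper is that you work with $\Theta_{\bar X}=\bar v/W$ and $\Delta_g$, while the paper works with $\bar v$ and the drift operator $\LL_W$ and the weighted measure $\di x_W$; this is a harmless change of variables.

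There is, however, a real gap hiding in what you label as routine. You apply the divergence theorem (and the integration by parts that lands you on $-\int_\Sigma\varphi^2 h\Delta_g h$) to the vector field $Z=\eta^2\Theta_{\bar X}^{-1}\nabla\Theta_{\bar X}$, but the hypothesis only gives $\bar v>0$ in the \emph{open} set $\Omega$; in the applications one explicitly has $\bar v=c_j(X,\bar\eta)$ on $\partial_j\Omega$, which can vanish (e.g.\ if some $c_j=0$, or wherever $X$ is tangent to $\partial\Omega$). At such boundary zeros, by the Hopf lemma $\nabla\bar v\neq 0$, so $\Theta_{\bar X}^{-1}\nabla\Theta_{\bar X}=\nabla\log\Theta_{\bar X}$ genuinely blows up in the interior near those points, and $Z$ is not $C^1$ up to $\partial\Sigma$ — the bounded trace of $\langle Z,\nu\rangle$ you correctly computed on $\partial\Sigma$ does not by itself justify the divergence theorem. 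This is exactly what the paper's proof is built to circumvent: it replaces $\bar v$ by $\bar v+\eps$, derives the identity for the smooth regularized quantity (which requires no care at the boundary), and then sends $\eps\to 0$. To pass to the limit one needs the estimate \eqref{bound_boundary}, $|W\langle\nabla\bar v,\nabla W\rangle|\le C_K\bar v$ near $\partial\Omega$, which is proved via the Hopf boundary point lemma and is used to dominate the troublesome $\eps$-term; and the lower‑semicontinuous term $\int\bar v^2\|\nabla(\varphi hW/\bar v)\|^2$ is only controlled through Fatou, which is why \eqref{eq_poinc} appears as an inequality rather than an identity. In short: the computational core of your sketch is right, but the $\eps$‑regularization and the Hopf‑type bound are not "routine bookkeeping" — they are the technical heart of the argument, and skipping them leaves the divergence‑theorem step unjustified whenever $\bar v$ touches zero on $\partial\Omega$.
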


\begin{proof}
We recall that $\bar v$ satisfies $\LL_W \bar v = 0$. Thus, for every $\eps>0$,
\begin{equation}\label{eq_bonita}
	\LL_W \log(\bar v + \eps) = - \| \nabla \log(\bar v + \eps)\|^2.
\end{equation}
We hereafter consider integration with respect to the weighted measures $\di x_W$ and $\di \mathscr{H}^{m-1}_W$, defined by 
$$
	\di x_W = W^{-2} \di x_g, \qquad \di \mathscr{H}^{m-1}_W = W^{-2} \di \mathscr{H}^{m-1}_g,
$$
that we omit to write. Recall that $\LL_W$ is symmetric with respect to $\di x_W$. Integrating by parts on $\Sigma$ against $\phi^2$, with $\phi \in \lip_c(\overline{\Omega})$, we obtain 
	\[
		\int_{\partial \Sigma} \phi^2 \langle \frac{\nabla \bar v}{\bar v +\eps}, \eta \rangle = 2\int_\Sigma \frac{\phi}{\bar v +\eps} \langle \nabla \phi, \nabla \bar v \rangle - \int_\Sigma \phi^2\|\nabla \log (\bar v +\eps)\|^2,
	\]
	where $\eta$ is the exterior normal to $\partial\Sigma$ in $\Sigma$. Direct computation gives
	$$
		(\bar v + \eps)^2 \left\| \nabla \left( \frac{\phi}{\bar v + \eps}\right) \right\|^2 = \|\nabla\phi\|^2 - 2\frac{\phi}{\bar v +\eps} \langle \nabla \phi, \nabla \bar v \rangle + \phi^2\|\nabla \log (\bar v +\eps)\|^2,
	$$
	so we deduce
	\begin{equation} \label{picone}
		\int_{\partial \Sigma} \phi^2 \langle \frac{\nabla \bar v}{\bar v+\eps}, \eta \rangle = \int_\Sigma \|\nabla \phi\|^2 - \int_\Sigma (\bar v + \eps)^2 \left\| \nabla \left( \frac{\phi}{\bar v + \eps}\right) \right\|^2 \qquad \forall \phi \in \lip_c(\overline{\Omega}),
	\end{equation}
	that is a form of Picone's identity, \cite{picone}. Let $\varphi \in \lip_c(\overline{\Omega})$ be given. Recall that $\LL_W W = q W$, where we have set for convenience
	$$
		q := \|\II_\Sigma\|^2 + \overline{\Ricc}({\bf n},{\bf n}).
	$$
	We multiply both sides of $\LL_W W = q W$ by $\varphi^2 W \bar v/(\bar v+\eps)$ and we integrate by parts to get
	\begin{equation}\label{eq_variecaccio}
	\begin{array}{lcl}
	\disp \int_\Sigma q\varphi^2 \frac{\bar v}{\bar v+\eps}W^2 & = & \disp \int_\Sigma \varphi^2 \frac{\bar v}{\bar v+\eps} W \LL_W W \\[0.5cm]
	& = & \disp \int_{\partial \Sigma} \varphi^2 \frac{\bar v}{\bar v+\eps} W \langle \nabla W, \eta \rangle - 2 \int_\Sigma \varphi \frac{\bar v}{\bar v+\eps} W \langle \nabla \varphi, \nabla W \rangle \\[0.5cm]
	& & \disp - \int_\Sigma \varphi^2 \frac{\bar v}{\bar v+\eps} \|\nabla W\|^2 - \int_\Sigma \varphi^2 W \langle \frac{\eps \nabla \bar v}{(\bar v+\eps)^2}, \nabla W \rangle 	\\[0.5cm]
	& = & \disp \int_{\partial \Sigma} \varphi^2 \frac{\bar v}{\bar v+\eps} W \langle \nabla W, \eta \rangle - 2 \int_\Sigma \varphi W \langle \nabla \varphi, \nabla W \rangle \\[0.5cm]
	& & \disp - \int_\Sigma \varphi^2 \|\nabla W\|^2 - \int_\Sigma \varphi^2 W \langle \frac{\eps \nabla \bar v}{(\bar v+\eps)^2}, \nabla W \rangle \\[0.5cm]
	& & \disp + 2 \int_\Sigma \varphi W \frac{\eps}{\bar v+\eps} \langle \nabla \varphi, \nabla W \rangle +  \int_\Sigma \varphi^2 \frac{\eps}{\bar v + \eps} \|\nabla W\|^2.
	\end{array}
	\end{equation}
By Lemma \ref{lem_ideboundary}, $\langle \bar v W \nabla W, \eta \rangle = W^2 \|\nabla u\|^2 \langle \nabla \bar v, \eta \rangle$ on $\partial \Sigma$. Therefore, applying Picone's identity \eqref{picone} with $\phi = \varphi W\|\nabla u\|$, the boundary term in \eqref{eq_variecaccio} can be rewritten as 
%
	\[
	\begin{array}{lcl}
	\disp \int_{\partial \Sigma} \frac{\varphi^2 \bar v}{\bar v+\eps} W \langle \nabla W, \eta \rangle 
	& = & \disp \int_{\partial \Sigma} \varphi^2W^2\|\nabla u\|^2 \langle \frac{\nabla \bar v}{\bar v+\eps} , \eta \rangle \\[0.5cm]
	& = & \disp \disp \int_{\Sigma} \big\|\nabla( \varphi  W \|\nabla u\|\big)\big\|^2 - \int_\Sigma (\bar v + \eps)^2 \left\| \nabla \left( \frac{\varphi\|\nabla u\|W}{\bar v + \eps}\right) \right\|^2.
	\end{array}
	\]
Plugging into \eqref{eq_variecaccio}, we get	
	\begin{equation}\label{eq_variecaccio_2}
	\begin{array}{lcl}
	\disp \int_\Sigma q\varphi^2 \frac{\bar v}{\bar v+\eps}W^2 & = & \disp \int_{\Sigma} \big\|\nabla( \varphi  W \|\nabla u\|\big)\big\|^2 - 2 \int_\Sigma \varphi W \langle \nabla \varphi, \nabla W \rangle \\[0.5cm]
	& & \disp - \int_\Sigma \varphi^2 \|\nabla W\|^2 - \int_\Sigma \varphi^2 W \langle \frac{\eps \nabla \bar v}{(\bar v+\eps)^2}, \nabla W \rangle \\[0.5cm]
	& & \disp + 2 \int_\Sigma \varphi W \frac{\eps}{\bar v+\eps} \langle \nabla \varphi, \nabla W \rangle +  \int_\Sigma \varphi^2 \frac{\eps}{\bar v + \eps} \|\nabla W\|^2 \\[0.5cm]
	& & \disp - \int_\Sigma (\bar v + \eps)^2 \left\| \nabla \left( \frac{\varphi\|\nabla u\|W}{\bar v + \eps}\right) \right\|^2.
	\end{array}
	\end{equation}
	By the dominated convergence theorem, since $\bar v > 0$ on $\Sigma$, we get
	\[
	\int_\Sigma \left| 2\varphi W \frac{\eps}{\bar v+\eps} \langle \nabla \varphi, \nabla W \rangle + \varphi^2 \frac{\eps}{\bar v+\eps} \|\nabla W\|^2 \right| \ra 0 \qquad \text{as } \, \eps \ra 0.
	\]
	We examine the integral
	\[
	(I) : = \left |\int_\Sigma \varphi^2 W \langle \frac{\eps \nabla \bar v}{(\bar v+\eps)^2}, \nabla W \rangle \right|
	\]
The limit of $(I)$ as $\eps \to 0$ is easily seen to be zero if $\mathrm{spt} \, \varphi \cap \partial \Omega = \emptyset$, since in this case $\bar v$ has a positive lower bound. Fix then $K \subset \overline{\Omega}$ compact that meets the boundary $\partial \Omega$. We claim that there exists a constant $C_K>0$ such that
	\begin{equation}\label{bound_boundary}
	|W\langle \nabla \bar v, \nabla W \rangle| \le C_K \bar v \qquad \text{on } \, K. 
	\end{equation}
	We first consider points of $K \cap \partial \Omega$. Since $W$ is constant on $\partial \Omega$, 
	\[
	W|\langle \nabla \bar v, \nabla W \rangle | = W|\langle \nabla \bar v, \eta \rangle \langle \eta, \nabla W \rangle | \qquad \text{on } \, \partial \Omega.
	\]	
	If $\di u \neq 0$ on $\partial \Omega$, we use Lemma \ref{lem_ideboundary} to get
	\[
	\begin{array}{lcl}
	W|\langle \nabla \bar v, \nabla W \rangle | & = & \disp \frac{\langle \nabla W, \eta \rangle^2}{\|\nabla u\|^2}  \bar v \\[0.3cm]
	& = & \disp W^6\langle \nabla \|\nabla u\|, \eta \rangle^2 \bar v \le W^6 \|\nabla ^2 u\|^2 \bar v \le C_K \bar v  \qquad \text{on } \, K \cap \partial \Omega,
	\end{array}
	\]
	since $u \in C^2$ up to $\partial \Omega$, where we used the third in \eqref{nablau2} and Kato inequality $\|\nabla \|\nabla u\|\|^2 \le \|\nabla ^2 u\|^2$. On the other hand, if $\di u = 0$ on $\partial \Omega$ then 
	\[
	W\langle \nabla \bar v, \nabla W \rangle = \frac{1}{2}\langle \nabla \bar v, \nabla |Du|^2 \rangle 
	\]
	vanishes on $\partial \Omega$, so \eqref{bound_boundary} holds on $K \cap \partial \Omega$. To deduce the validity of \eqref{bound_boundary} on the entire $K$, observe that if $\bar v(x) = 0$ for some $x \in \partial \Omega$, then $\nabla \bar v(x) \neq 0$ by Hopf boundary point Lemma. This observation and the positivity of $\bar v$ on $\Omega$ imply \eqref{bound_boundary}. Concluding, by the compactness of the support of $\varphi$, there exists $C(u,\varphi)$ such that $|W\langle \nabla \bar v, \nabla W \rangle| \le C \bar v$ on ${\rm spt } \varphi$, and by Lebesgue convergence theorem we get
	\[
	(I) \le \int_\Sigma \varphi^2 \frac{C \eps}{\bar v+\eps} \ra 0 \qquad \text{as } \, \eps \ra 0.
	\]	
Because of Fatou's Lemma, 
	\[
	\int_\Sigma \bar v^2 \left\| \nabla\left( \frac{\varphi\|\nabla u\|W}{\bar v}\right) \right\|^2 \le \liminf_{\eps \ra 0} \int_\Sigma (\bar v + \eps)^2 \left\| \nabla \left( \frac{\varphi\|\nabla u\|W}{\bar v + \eps} \right) \right\|^2
	\]
	so letting $\eps \ra 0$ in \eqref{eq_variecaccio_2} we obtain
	\begin{equation}\label{eq_variecaccio_3}
	\begin{array}{l}
	\disp \int_\Sigma q\varphi^2W^2 + \int_\Sigma \bar v^2 \left\| \nabla \left( \frac{\varphi\|\nabla u\|W }{\bar v}\right) \right\|^2 \\[0.5cm]
\le \disp - 2 \int_\Sigma \varphi W \langle \nabla \varphi, \nabla W \rangle + \int_{\Sigma} \big\|\nabla( \varphi  W \|\nabla u\|\big)\big\|^2 \disp - \int_\Sigma \varphi^2 \|\nabla W\|^2.
	\end{array}
	\end{equation}
	Denote with (III) the right-hand side of \eqref{eq_variecaccio_3}. Using $W \|\nabla u\| = \sqrt{W^2-1}$, we compute
	\[
	\begin{array}{lcl}	
	({\rm III}) & = & \disp - \int_\Sigma \varphi \langle \nabla \varphi, \nabla(W^2-1) \rangle + \int_\Sigma \|\nabla( \varphi \sqrt{W^2-1} )\|^2 - \int_\Sigma \varphi^2 \|\nabla W\|^2 \\[0.5cm]
	& = & \disp - \int \varphi \langle \nabla \varphi, \nabla (W^2-1) \rangle + \int_{\Sigma} (W^2-1)\|\nabla \varphi\|^2 \\[0.5cm]
	& & \disp + \int_\Sigma \varphi^2 \big[ \| \nabla \sqrt{W^2-1} \|^2 - \|\nabla W\|^2 \big] + 2 \int_\Sigma \varphi \sqrt{W^2-1} \langle \nabla \varphi, \nabla \sqrt{W^2-1} \rangle \\[0.5cm]
	& = & \disp \int (W^2-1)\|\nabla \varphi\|^2 + \int_\Sigma \varphi^2 \big[ \| \nabla \sqrt{W^2-1} \|^2 - \|\nabla W\|^2 \big].
	\end{array}
	\]	
	Differentiating the third in \eqref{nablau2} we get $\|\nabla W\|^2 = W^6 \|\nabla u\|^2 \|\nabla \|\nabla u\|\|^2$, and therefore 
	\[
	\| \nabla \sqrt{W^2-1} \|^2 - \|\nabla W\|^2 = \frac{\|\nabla W\|^2}{W^2-1} = W^4 \|\nabla \|\nabla u\|\|^2
	\]	
	Thus
	\[
	\begin{array}{lcl}	
	({\rm III}) & = & \disp  \int_\Sigma (W^2-1)\|\nabla \varphi\|^2 + \int_\Sigma \varphi^2 W^4 \|\nabla \|\nabla u\|\|^2.
	\end{array}
	\]
	Putting this into \eqref{eq_variecaccio_3} we get
	$$
		\disp \int_\Sigma \varphi^2W^2(q-W^2\|\nabla \|\nabla u\|\|^2) + \int_\Sigma \bar v^2 \left\| \nabla \left( \frac{\varphi\|\nabla u\|W }{\bar v}\right) \right\|^2 \leq \int_\Sigma (W^2-1)\|\nabla \varphi\|^2
	$$
	Since
	$$
		q = \|\II_\Sigma\|^2 + \overline{\Ricc}({\bf n}, {\bf n}) = \|\II_\Sigma\|^2 + W^{-2}\Ricc(Du,Du)
	$$
	we can further rewrite
	\begin{align*}
		\int_\Sigma \varphi^2 & \left[W^2(\|\II_\Sigma\|^2-W^2\|\nabla \|\nabla u\|\|^2) + \Ricc(Du,Du)\right] + \\
		& \qquad \qquad + \int_\Sigma \bar v^2 \left\| \nabla \left( \frac{\varphi\|\nabla u\|W }{\bar v}\right) \right\|^2 \leq \int_\Sigma (W^2-1)\|\nabla \varphi\|^2
	\end{align*}
	By \eqref{hessian_on_graph_u} and \eqref{eq_stezum},
	\[
	\begin{array}{lcl}
		\|\II_\Sigma\|^2 - W^2 \|\nabla \|\nabla u\|\|^2 & = & W^2\left(\|\nabla^2 u\|^2 - \|\nabla \|\nabla u\|\|^2\right) \\[0.5cm]
		& = & W^2 \left(\|\nabla_\top \|\nabla u \|\|^2 + \|\nabla u\|^2 \|A\|^2\right).		
	\end{array}
	\]
	Plugging into \eqref{eq_variecaccio_3}, rearranging and using $\di x_g = W^2 \di x_W$ and $(W^2-1)/W^2 = \|\nabla u\|^2$ we conclude \eqref{eq_poinc}. 
\end{proof}

We are ready for the 

\begin{proof}[Proof of Proposition \ref{prop_splitting}]

	By the parabolicity of $\overline \Omega$, every compact subset $K\subseteq\overline{\Omega}$ has zero capacity in the manifold with boundary $(\overline{\Omega},\sigma)$, that is,
	$$
		\inf\left\{ \int_\Omega |D\phi|^2 \di x : \phi\in \lip_c(\overline{\Omega}), \, \phi \geq 1 \, \text{ on } \, K \right\} = 0.
	$$
In particular, by \cite[Thm. 1.5]{imperapigolasetti}, there exists a sequence $\{\varphi_j\} \subset \lip_c(\overline{\Omega})$ satisfying
	\[
		\varphi_j \ra 1 \ \ \text{ in } \, W^{1,\infty}_\loc(\overline{\Omega}), \qquad \int_\Omega |D\varphi_j|^2 \ra 0
	\] 
	as $j \ra \infty$. For each $j\geq 1$, we apply Lemma \ref{lem_poinc} to deduce
	\[
	\begin{array}{l}
		\disp \int_\Sigma \left[ W^2\left( \|\nabla_\top \|\nabla u\|\|^2 + \|\nabla u\|^2 \|A\|^2 \right) + \frac{\Ricc(Du,Du)}{W^2} \right] \varphi_j^2 \di x_g + \\[0.5cm]
		\disp \qquad \qquad + \int_\Sigma \frac{\bar v^2}{W^2} \left\| \nabla \left(\frac{\varphi_j\|\nabla u\|W}{\bar v}\right) \right\|^2 \di x_g \le \int_\Sigma \|\nabla u\|^2 \|\nabla \varphi_j \|^2 \di x_g.
	\end{array}
	\]
The assumed boundedness of $|Du|$ guarantees that there exists a constant $C_0$ such that $W \le C_0$ on $\Omega$. Hence, 
	\[
		\int_\Sigma \|\nabla u\|^2 \|\nabla \varphi_j\|^2 \di x_g \le \int_\Omega |D\varphi_j|^2 W \di x \le C_0 \int_\Omega |D\varphi_j|^2 \di x \to 0,
	\]
	thus letting $j \ra \infty$ and using Fatou's Lemma we deduce 
	\begin{equation}\label{eq_persplitting}
	\|\nabla_\top \|\nabla u\|\|^2 + \|\nabla u\|^2 \|A\|^2 \equiv 0, \qquad \nabla\left( \frac{\|\nabla u\|W}{\bar v} \right) \equiv 0 \qquad \text{on } \, \Sigma.
	\end{equation}
	
	
	From now on, the splitting proceeds similarly to \cite{fmv}, with a few extra topological arguments. First, from $\bar v > 0$ we deduce $\di u \neq 0$ at every point of $\Omega$. For $x \in \Sigma$, define $\nu = \nabla u/\|\nabla u\|$ and let $\{e_\alpha\}$ be an orthonormal frame tangent to $\{u = u(x)\}$ in $\Sigma$. Because of the identities 
	\[
	\langle \nabla \|\nabla u\|, e_j \rangle  = \nabla^2 u(\nu, e_j), \qquad A_{\alpha\beta} = \frac{\nabla^2_{\alpha\beta} u}{\|\nabla u\|},
	\]
with $1 \le j \le m$ and $A$ the second fundamental form of the hypersurface $\{ u = u(x)\}$ in $\Sigma$, the first in \eqref{eq_persplitting} implies that the only nonzero component of $\nabla^2 u$ (hence, by \eqref{hessian_on_graph_u}, of $D^2u$), is the one in the direction of $Du$:
	\begin{equation}\label{eq_D2u}
	D^2 u = D^2 u\left( \frac{Du}{|Du|},\frac{Du}{|Du|} \right) \frac{\di u}{|Du|} \otimes \frac{\di u}{|Du|} \qquad \text{on } \, \Omega.
	\end{equation}
A straightforward computation allows us to deduce from \eqref{eq_D2u} that $|Du|$ is locally constant on level sets of $u$, that integral curves of $Du /|Du|$ are geodesics in $M$, and that level sets of $u$ are totally geodesic both in $\Sigma$ and in $\Omega$. In the limit, we infer that each component of $\partial \Omega$ is totally geodesic. Let $N \subset \Omega$ be a connected component of a level set of $u$, say of $\{u = b\}$ with $b \not\in \{b_1,b_2, \ldots, b_{j_0}\}$. By the implicit function theorem, note that $N$ is properly embedded both in $\Omega$ and in $M$, and is therefore a complete manifold without boundary. We denote with $\Phi(t,x)$ the flow of $D u / |Du|$ starting from $N$, defined on the connected set
	\[
	\mathscr{D} \subset \R \times N, \qquad \mathscr{D} = \big\{ (t,x) : x \in N, \ t \in (t_1(x), t_2(x))\big\},
	\]
where, for every $x\in N$, $t_1(x)\in[-\infty,0)$ and $t_2(x)\in(0,+\infty]$ are the extrema of the largest open interval $I_x = (t_1(x),t_2(x))$ such that for every $t\in I_x$ the point $\Phi(t,x)$ is well defined and belongs to $\Omega$. If $t_1(x) > -\infty$ (respectively, if $t_2(x) < +\infty$) then the curve $t \mapsto \Phi(t,x)$ converges to a point of $\partial\Omega$ as $t \searrow t_1(x)$ (resp., $t\nearrow t_2(x)$) which we shall denote as $x_\ast = \Phi(t_1(x)^+,x)$ (resp., $x^\ast = \Phi(t_2(x)^-,x)$). The function $t_1$ is upper semi-continuous on $N$, that is, for every $x\in N$ we have
$$
	\limsup_{n\to \infty} t_1(x_n) \leq t_1(x)
$$
for every sequence $\{x_n\} \subseteq N$ converging to $x$: otherwise, we could find $t\in(t_1(x),0]$ and a sequence $\{x_n\}$ converging to $x$ such that $t_1(x_n) \to t$, yielding $\partial\Omega \ni (x_n)_\ast \to \Phi(t,x) \in \Omega$, absurd. Similarly, the function $t_2$ is lower semi-continuous on $N$. Hence, $\mathscr{D}$ is open in $\R \times N$. From \eqref{eq_D2u} we deduce
	\[
	\left( D_V \frac{Du}{|Du|}, W \right) = 0 \qquad \forall \, V,W \in T\Omega,
	\]
thus $Du /|Du|$ is a parallel vector field.	By standard theory, the induced metric on $\mathscr{D}$ by $\Phi$ is the product metric $\di t^2 + \sigma_{|N}$. Let $c_0>0$ be the constant value of $|Du|$ on $N$ and let $\beta$ be the maximal solution of the Cauchy problem
\begin{equation}\label{eq_beta_1}
	\begin{cases}
		\beta' = H \dfrac{(1+\beta^2)^{3/2}}{\beta}, \\
		\beta(b) = c_0.
	\end{cases}
\end{equation}
Since $u$ is strictly increasing along the curves $t \mapsto \Phi(t,x)$ and $|Du|$ is locally constant on level sets of $u$, for every $x\in\Omega$ there exist a neighbourhood $U_x \subseteq \Omega$ and a smooth real function $\beta_x$ such that
$$
	|Du| = \beta_x(u) \qquad \text{on } \, U_x.
$$
Since $Du / |Du|$ is parallel, on $U_x$ we have
\begin{align*}
	H & = \diver\left( \frac{Du}{\sqrt{1+|Du|^2}} \right) = \diver\left( \frac{|Du|}{\sqrt{1+|Du|^2}} \frac{Du}{|Du|} \right) = D_{Du/|Du|}\frac{|Du|}{\sqrt{1+|Du|^2}} \\
	& = \beta_x(u) \left( \frac{\beta_x}{\sqrt{1+\beta_x^2}} \right)'(u) = \frac{\beta_x(u)\beta_x'(u)}{(1+\beta_x(u)^2)^{3/2}}
\end{align*}
that is, $\beta_x$ is a solution of the Cauchy problem
$$
	\begin{cases}
		y' = H \dfrac{(1+y^2)^{3/2}}{y}, \\
		y(u(x)) = |Du(x)|.
	\end{cases}
$$
Without loss of generality, let us suppose that $\beta_x$ is the maximal solution of this problem. For $x\in N$, by uniqueness $\beta_x = \beta$, hence for every $x_1,x_2\in\Phi(\mathscr{D})$ belonging to the same curve $t \mapsto \Phi(t,x)$, $x\in N$, it must hold $\beta_{x_1} = \beta_{x_2}$. Therefore, $\beta_x = \beta$ for every $x\in\Phi(\mathscr{D})$, equivalently
$$
	|Du| = \beta(u) \qquad \text{on } \, \Phi(\mathscr{D}).
$$

We claim that $\Phi(\mathscr{D}) = \Omega$. The map $\Phi$ is a diffeomorphism and $\mathscr{D}$ is open in $\R\times N$, so $\Phi(\mathscr{D})$ is open in $\Omega$. We check that $\Phi(\mathscr{D})$ is also closed in $\Omega$, thus deducing $\Phi(\mathscr{D}) = \Omega$ by connectedness of $\Omega$.



First, we prove that $t_1$ and $t_2$ are constant on $N$. We show this for $t_1$, the proof for $t_2$ being analogous. Let us define an auxiliary function $\rho : \overline{u(\Phi(\mathscr D))} \to \R$ by setting
$$
	\rho(s) = \int_b^s \frac{\di \sigma}{\beta(\sigma)} \qquad \text{for every } \, s \in \overline{u(\Phi(\mathscr D))}.
$$
Note that $\rho$ is continuous, strictly increasing and finite-valued since the above integral converges for every $s\in \overline{u(\Phi(\mathscr D))}$. Indeed, $\beta$ is continuous and strictly positive on $u(\Phi(\mathscr D))$,  and if $s\in\overline{u(\Phi(\mathscr D))}$ is such that $\beta(\sigma)\to 0$ as $\sigma\to s$, then by \eqref{eq_beta_1} necessarily $H\neq0$ and $\beta(\sigma) \sim \sqrt{2|H||\sigma-s|}$ as $\sigma\to s$, so $1/\beta$ is integrable in a neighbourhood of $s$. Also note that by integrating $\frac{\di}{\di t} u(\Phi) = |D u|(\Phi) = \beta(u(\Phi))$ we get
\[
	t = \int_{b}^{u(\Phi(t,x))} \frac{\di \sigma}{\beta(\sigma)} = \rho(u(\Phi(t,x))) \qquad \text{for every } \, (t,x) \in \mathscr{D}.
\]
We show that $t_1$ is lower semi-continuous on $N$. Suppose, by contradiction, that for some $x\in N$ and for some sequence $\{x_n\}\subseteq N$ converging to $x$ we have
$$
	\lim_{n\to \infty} t_1(x_n) < t_1(x).
$$
Fix $\bar t$ such that
$$
	\lim_{n\to \infty} t_1(x_n) < \bar t < t_1(x), \qquad \rho^{-1}(\bar t) \not\in \{b_1, b_2,\dots \}.
$$
Then, $\{\Phi(\bar t,x_n)\} \subseteq \Omega$ converges to a point $\bar x$ of $\partial\Omega$. Along this sequence, $u$ has the constant value $\rho^{-1}(\bar t)$, so by continuity it must be $u(\bar x) = \rho^{-1}(\bar t)$. But $\rho^{-1}(\bar t) \not\in \{b_1,b_2,\dots\} = u(\partial\Omega)$ and we have reached a contradiction. Since we already showed that $t_1$ is upper semi-continuous, we conclude that $t_1$ is continuous on $N$. For every $x\in N$, we either have $t_1(x) = -\infty$ or $t_1(x) \in (-\infty,0)$. In the second case, the endpoint $x_\ast = \lim_{t\to t_1(x)+} \Phi(t,x)$ belongs to $\partial\Omega$ and by continuity $t_1(x) = \rho(u(x_\ast))$. So, $t_1(N) \subseteq \{\rho(b_1),\rho(b_2),\dots\} \cup \{-\infty\}$. Since this set contains no open intervals and $t_1$ is continuous on the connected set $N$, we conclude that $t_1$ is constant.

Let $T_1\in[-\infty,0)$ and $T_2 \in (0,+\infty]$ be the constant values of $t_1$ and $t_2$ on $N$, so that
$$
	\mathscr{D} = (T_1,T_2) \times N.
$$
For every $\bar t \in (T_1,T_2)$, the image $N_{\bar t} = \Phi(\{\bar t\}\times N) \subseteq \Omega$ is a connected open subset of the embedded submanifold $\{u = \rho^{-1}(\bar t)\} \subseteq \Omega$. The restriction $\Phi_{|\{\bar t\} \times N} : \{\bar t\} \times N \to N_{\bar t}$ is a local Riemannian isometry and $\{\bar t\} \times N$ is complete, so $\Phi_{|\{\bar t\} \times N}$ is a Riemannian covering map and therefore $N_{\bar t}$ is also complete with respect to its intrisinc geodesic distance, that we shall denote by $d_{\bar t}$ (see \cite[Lemma 5.6.4 and Proposition 5.6.3]{petersen}).


We prove that $\Phi(\mathscr{D})$ is closed in $\Omega$. 
Let $\{p_n\}\subseteq \Phi(\mathscr{D})$ be a given sequence converging to some point $\bar p \in \Omega$. We have to show that $\bar p \in \Phi(\mathscr{D})$. Set $\bar t = \rho(u(\bar p))$, and observe that $u(\bar p) \in u(\overline{\Phi(\mathscr D)}) \subseteq \overline{u(\Phi(\mathscr D))}$, hence $\bar t$ is finite. For every $n$ we can find $(t_n,x_n) \in \mathscr{D}$ such that $p_n = \Phi(t_n,x_n)$. By continuity, $t_n = \rho(u(p_n)) \to \rho(u(\bar p)) = \bar t$, hence $T_1 \leq \bar t \leq T_2$. Both inequalities are strict, otherwise either $\{(x_n)_\ast\} = \{\Phi(T_1^+,x_n)\}$ or $\{(x_n)^\ast\} = \{\Phi(T_2^-,x_n))\}$ would be a sequence of points of $\partial\Omega$ converging to $\bar p\in\Omega$, absurd. Setting $q_n = \Phi(\bar t,x_n)$ for every $n$, we have that $\{q_n\}$ is a sequence of points of $N_{\bar t}$ converging to $\bar p$ in $M$, since $d_\sigma(p_n,q_n) \leq |\bar t - t_n| \to 0$. Hence, $\{q_n\}$ is a Cauchy sequence in $M$. By completeness of $M$, any two points $q_n$, $q_{n'}$ are joined by a minimizing geodesic arc in $M$. Since $(N_{\bar t},d_{\bar t})$ is complete and totally geodesic, every geodesic in $M$ joining two points of $N_{\bar t}$ must lie in $N_{\bar t}$. So, $\{q_n\}$ is a Cauchy sequence in $N_{\bar t}$ and therefore converges to some point $\bar q \in N_{\bar t}$. Since $N_{\bar t}$ is embedded in $M$, $\bar q = \bar p$ and we conclude $\bar p \in N_{\bar t} \subseteq \Phi(\mathscr{D})$, as desired. This shows that $\Phi(\mathscr{D})$ is closed in $\Omega$.

As already stated, since $\Phi(\mathscr{D})$ is non-empty and both open and closed in the connected set $\Omega$, we have $\Phi(\mathscr{D}) = \Omega$. Thus, $\Phi$ realizes an isometry between $\Omega$ and the product manifold
$$
	(T_1,T_2)\times N,
$$
and $N$ is parabolic because of Lemma \ref{lem_paraprod}. Furthermore, $u$ only depends on the variable $t$ because 
$$
	u(\Phi(t,x)) = \rho^{-1}(t) \qquad \text{for every } \, (t,x) \in (T_1,T_2)\times N.
$$
In the chart $\Phi$, $u= u(t)$ is therefore a solution of 
	\[
	H = \diver \left( \frac{Du}{\sqrt{1+ |Du|^2}} \right) = \diver \left( \frac{u'\partial_t}{\sqrt{1+ (u')^2}} \right) = \left( \frac{u'}{\sqrt{1+ (u')^2}} \right)'.
	\]
	Integration of the ODE shows that the possibility $(-\infty, T_2) \times N$ is incompatible with the fact that $u$ is increasing and bounded from below, while the other models lead to the solutions $u_{c_1,H}$ in \eqref{eq_split}, up to reparametrizing $t \mapsto t - T_1$ and setting $T = T_2 - T_1$. To check $(ii)$, the second in \eqref{eq_persplitting} implies 
	\[
	\bar v = c W\|\nabla u\| \qquad \text{on } \, \Omega,
	\]	
for some constant $c>0$. Using \eqref{nablau2}, the identity rewrites as $(X, Du) = c|Du|$, that is, $(X, \partial_t) = c$.
\end{proof}

\begin{proof}[Proof of Theorem \ref{thm-epigraph}]
For $i=1,2$, let $\Gamma_i = \{ (\tau,x) \in M : \tau = \varphi_i(x) \}$ be the graph of $\varphi_i$ over $N$. The boundary $\partial\Omega$ is either $\Gamma_1$, in case $(i)$, or $\Gamma_1\cup\Gamma_2$, in case $(ii)$. The unit outward-pointing normal $\bar\eta$ on $\partial\Omega$ is given by
	\begin{equation}
		\bar\eta = \frac{D\varphi_1 - \partial_\tau}{\sqrt{1+|D\varphi_1|^2}} \qquad \text{on } \, \Gamma_1, \qquad \qquad \bar\eta = \frac{\partial_\tau - D\varphi_2}{\sqrt{1+|D\varphi_2|^2}} \qquad \text{on } \, \Gamma_2,
	\end{equation}
	where $D\varphi_i$ is the gradient of $\varphi_i$ in $N$. In case $(i)$ we have $c(\partial_\tau,\bar\eta) = -c/\sqrt{1+|D\varphi_1|^2}$ on $\Gamma_1 = \partial\Omega$: this quantity is nonzero at every point of $\Gamma_1$, so for either $X = \partial_\tau$ or $X = -\partial_\tau$ we have $c(X,\bar\eta) > 0$ on $\partial\Omega$. In case $(ii)$ we have $c_1(\partial_\tau,\bar\eta) = -c_1/\sqrt{1+|D\varphi_1|^2} \geq 0$ on $\Gamma_1$ and $c_2(\partial_\tau,\bar\eta) = c_2/\sqrt{1+|D\varphi_2|^2} \geq 0$ on $\Gamma_2$, and at east one of them does not vanish. Fix $o_N\in N$ and let $o = (0,o_N) \in \R \times N = M$. We check that $\Omega$ satisfies
	\begin{equation} \label{cor-partOm}
		\liminf_{r\to \infty} \frac{\log|\partial\Omega \cap B_r|}{r^2} < \infty, \qquad \int^{\infty} \frac{r\di r}{|\Omega \cap B_r|} = \infty
	\end{equation}
	where $B_r = B^M_r(o)$. For every $r>0$ we have
	$$
		\{ (t,x) \in B_r : t = \varphi_i(x) \} \subseteq \{ (t,x) \in \R \times B^N_r(o_N) : t = \varphi_i(x) \} \qquad \text{for } \, i = 1,2.
	$$
	Since $\varphi_1$ and $\varphi_2$ are globally Lipschitz, there exists $C>0$ such that, for every $r>0$,
	$$
		|\{ (\tau,x) \in \R \times B^N_r(o_N) : \tau = \varphi_i(x) \}| \leq C|B^N_r(o_N)| \qquad \text{for } \, i = 1,2.
	$$
	Hence, $|\partial\Omega \cap B_r| \leq 2C|B^N_r(o_N)|$ for every $r>0$ and the first condition in \eqref{cor-partOm} is satisfied under the weaker assumption \eqref{ipo_epi_weak}.
	
	Since $\varphi_1$ and $\varphi_2$ are Lipschitz, there exist $C_1 > 0$, $C_2 \geq 0$ such that
	$$
		|\varphi_i(x)| \leq C_1 + C_2 d_N(o_N,x) \qquad \text{for every } \, x \in N, \, \text{ for } \, i = 1,2.
	$$
	If $\Omega = \{ (\tau,x) \in M : \tau > \varphi_1(x) \}$, then for each $r>0$ the inclusion $\Omega \cap B_r \subseteq ( - C_1 -C_2 r, r) \times B^N_r(o)$ implies the inequality
	$$
		|\Omega \cap B_r| \leq (C_1 + (1+C_2)r)|B^N_r(o)|.
	$$
	If $\Omega = \{ (\tau,x) \in M : \varphi_1(x) < \tau < \varphi_2(x) \}$ then $\Omega \cap B_r \subseteq (-C_1-C_2 r,C_1+C_2 r) \times B^N_r(o)$ and
	$$
		|\Omega \cap B_r| \leq 2(C_1+C_2 r)|B^N_r(o)|.
	$$
	In both of the cases, if \eqref{ipo_epigraph} holds then 
	\begin{equation} \label{Om-Br-log}
		\limsup_{r\to \infty} \frac{|\Omega \cap B_r|}{r^2 \log r} < \infty
	\end{equation}
	and the second in \eqref{cor-partOm} follows. Furthermore, in the second case if $\varphi_1$ and $\varphi_2$ are bounded then we can choose $C_2 = 0$ and we obtain the validity of \eqref{Om-Br-log} under the weaker condition \eqref{ipo_epi_weak}.
	
	We are now in position to apply Theorem \ref{teo_splitting}. The domain $\Omega$ splits isometrically as the Riemannian product 
	\[
	\big((0,T) \times \Gamma_1, \di t^2 + \sigma_{|\Gamma_1}\big)
	\] 
with $T= \infty$ in case $(i)$, or $T\in(0,\infty)$ in case $(ii)$. Furthermore, $c_1 \le 0$ ($c_1 < 0$ if $H \le 0$) and the function $u$ coincides with $u_{c,H}(t)$ or $u_{c_1,H}(t)$, according to whether $(i)$ or $(ii)$ is considered. In particular, $\Gamma_1$ is a totally geodesic graph and, in case $(ii)$, $\Gamma_2$ is obtained by translating $\Gamma_1$ in the direction of $Du/|Du|$. The graph $\Gamma_1$ is totally geodesic in $M = \R\times N$ if and only if $D^2 \varphi_1 \equiv 0$ on $N$. If $\varphi_1$ is constant, say $\varphi_1 \equiv a_1$, then $t = \tau -a_1$. If $\varphi_1$ is not constant, then $D\varphi_1$ is a parallel, nowhere vanishing vector field on $N$, with constant norm $a_0 := |D\varphi_1| > 0$. Level sets of $\varphi_1$ in $N$ are totally geodesic and equidistant, so $N$ splits as a Riemannian product $N = \R\times N_0$ for some complete, boundaryless manifold $N_0$ with $\Ricc \ge 0$. The second in \eqref{ipo_epigraph} and a computation similar to the one yielding to \eqref{Om-Br-log} implies
	\begin{equation}\label{eq_perCalYau}
	\limsup_{r \to \infty} \frac{|B_r^{N_0}|}{\log r} < \infty.
	\end{equation}
If $N_0$ were non-compact, an inequality due to E.Calabi and S.T.Yau would ensure that $|B_r^{N_0}| \ge Cr$ for $r \ge 1$ and some constant $C$, contradicting \eqref{eq_perCalYau}. Hence, $N_0$ must be compact.\par
	Regarding the function $\varphi_1$, up to a reparametrization $s \mapsto -s$, $\partial_s\varphi_1 \equiv |D\varphi_1| = a_0$. Hence, for some constant $a_1\in\R$ we have
	$$
		\varphi_1(s,\xi) = a_0 s + a_1 \qquad \text{for every } \, (s,\xi) \in \R \times N_0.
	$$
	The coordinate function $s$ on $N$ extends to a smooth function on the product $M = \R\times N$. Regarding $M$ as the product $\R \times \R \times N_0$, we can denote its generic point by $(\tau,s,\xi)$, with $\xi\in N_0$. On $\Gamma_1$ we have $\bar\eta = -\partial_t$, therefore
	$$
		a_0 \partial_s = D\varphi_1 = -\sqrt{1+a_0^2}\partial_t + \partial_\tau.
	$$
Integrating, 
	$$
		t = \frac{\tau - a_0 s - a_1}{\sqrt{1+a_0^2}} + C \qquad \text{on } \, \Omega,
	$$
for some constant $C\in\R$. Since $t = 0$ and $\tau = \varphi_1 = a_0 s + a_1$ on $\Gamma_1$, we conclude $C=0$ and we obtain the desired expressions for $u$ in terms of $(\tau,s,\xi)$.
\end{proof}



\begin{appendix}

\section{Appendix}

\begin{proposition}\label{prop_AC}
	Let $H\in\R$, $\kappa\geq0$, $C\geq 0$ satisfy
	\begin{align}
		\label{appr-hp1-app}
		\frac{H^2}{m} + C^2 - (m-1)\kappa^2 \geq 0 & \qquad \text{if} \quad H \leq 0,
		\\
		\label{appr-hp2-app}
		\frac{H^2}{m} + C^2 - (m-1)\kappa^2 > 0 & \qquad \text{if} \quad H > 0.
	\end{align}
	Then, there exists $A\geq 1$ such that
	\begin{equation} \label{CAHge0-app}
		\frac{H^2}{m} - \frac{CH}{t} + \left( C^2 - (m-1)\kappa^2 \right) \frac{t^2-1}{t^2} \geq 0 \qquad \text{for every } \, t \geq A.
	\end{equation}
	In particular,
	\begin{itemize}
	\item [-] if $H \leq 0$ then \eqref{CAHge0-app} is true for any $A\geq 1$
	\item [-] if $H > 0$ and $C \geq \sqrt{m-1}\kappa$ then \eqref{CAHge0-app} holds for a given $A\geq 1$ if and only if
	$$
	\frac{H^2}{m} - \frac{CH}{A} + \left( C^2 - (m-1)\kappa^2 \right) \frac{A^2-1}{A^2} \geq 0
	$$
	\item [-] if $H > 0$ and $C < \sqrt{m-1}\kappa$ then \eqref{CAHge0-app} holds for a given $A\geq 1$ if
	$$
	\left\{\begin{array}{l}
	\dfrac{H^2}{m} - \dfrac{CH}{A} + \left( C^2 - (m-1)\kappa^2 \right) 	\dfrac{A^2-1}{A^2} \geq 0, \\[0.2cm]
	CHA + 2\left( C^2 - (m-1)\kappa^2 \right) \geq 0
	\end{array}\right.
	$$
	and these conditions are also necessary, 
	unless
	\begin{equation} \label{discr<0-app}
		\sqrt{1+\frac{m}{4}}C < \sqrt{m-1}\kappa, \qquad \frac{H^2}{m} \geq \frac{\left( (m-1)\kappa^2 - C^2 \right)^2}{(m-1)\kappa^2 - \left(1+\frac{m}{4}\right)C^2},
	\end{equation}
	in which case \eqref{CAHge0-app} is satisfied for any $A\geq 1$.
\end{itemize}
\end{proposition}

\begin{proof}
	Set
	$$
		P(s) := \frac{H^2}{m} - CHs + \left( C^2 - (m-1)\kappa^2 \right) \left( 1 - s^2 \right) \qquad \text{for every } \, s \in \R.
	$$
	Note that $P(0) \geq 0$ under assumptions \eqref{appr-hp1-app}-\eqref{appr-hp2-app}. By substituting $s=1/t$, for any given $A\geq 1$ condition \eqref{CAHge0-app} is equivalent to requiring that $P(s) \geq 0$ for every $0 < s \leq 1/A$.
	\begin{itemize}
		\item [-] If \eqref{appr-hp1-app} holds then
		$$
			P(s) \geq \frac{H^2}{m} + \left( C^2 - (m-1)\kappa^2 \right) \left( 1 - s^2 \right) \geq 0 \qquad \text{for every } \, 0 \leq s \leq 1.
		$$
		\item [-] If $H>0$ and $C\geq\sqrt{m-1}\kappa$ then $P'(s) = -CH+2\left( C^2 - (m-1)\kappa^2 \right)s \leq 0$ for $s\geq0$, so if $P(1/A) \geq 0$ then $P(s) \geq P(1/A) \geq 0$ for every $0 \leq s \leq 1/A$.
		\item [-] If $H<0$ and $C < \sqrt{m-1}\kappa$ then setting $a = (m-1)\kappa^2 - C^2 > 0$ we can write
		$$
			P(s) = as^2 - CHs + \frac{H^2}{m} - a.
		$$
		$P$ is a quadratic polynomial with positive leading coefficient and $P'(s) = -CH - 2as$. Note that $P(0) > 0$, $P'(0) \leq 0$ since we are in case \eqref{appr-hp2-app}, so $P$ can attain negative values only in $\R^+$. If $P(1/A) \geq 0$ and $P'(1/A) \leq 0$ for some $A>0$, then $P(s) \geq 0$ for $0 \leq s \leq 1/A$. The polynomial $P$ has discriminant
		$$
			\Delta_P = C^2H^2 + 4a^2 - 4a\frac{H^2}{m} = 4\left[ a^2 - \left( a - \frac{m}{4}C^2 \right)\frac{H^2}{m} \right].
		$$
		If $\Delta_P > 0$, then conditions $P(1/A) \geq 0$, $P'(1/A) \leq 0$ are also necessary to characterize $A>0$ such that \eqref{CAHge0-app} holds. Otherwise, \eqref{CAHge0-app} is satisfied for any $A>0$ since $P \geq 0$ on $\R$. A necessary condition for $\Delta_P \leq 0$ is clearly $a > mC^2/4$, and if this holds then $\Delta_P \leq 0$ if and only if $H^2/m \geq a^2/(a-mC^2/4)$. Explicitating $a$, these requests read as \eqref{discr<0-app}.
	\end{itemize}
\end{proof}

\begin{proposition}[Lemma \ref{nuovo-lemma-appr}] \label{lem_appr_appendix}
	Let $m\geq 2$, $\kappa\geq 0$, $H\in\R$, $C\geq 0$ satisfy \eqref{appr-hp1-app}, \eqref{appr-hp2-app}, \eqref{CAHge0-app}.
	Then, for any $\eps>0$ there exist $A < A_1 < A+\eps$, $C < C_2 < C_1 < C+\eps$ such that
	\begin{equation} \label{appr_CAH-app}
	\inf\left\{ \frac{H^2}{m} - \frac{C_1 H}{t} + \left( C_2^2 - (m-1)\kappa^2 \right)\frac{t^2-1}{t^2} : t \geq A_1 \right\} > 0.
	\end{equation}
\end{proposition}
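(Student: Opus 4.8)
The plan is to substitute $s = 1/t$ and reduce the statement to an elementary analysis of a quadratic polynomial. Setting
\[
P(s) := \frac{H^2}{m} - CHs + \big(C^2 - (m-1)\kappa^2\big)(1-s^2), \qquad Q_{C_1,C_2}(s) := \frac{H^2}{m} - C_1 H s + \big(C_2^2 - (m-1)\kappa^2\big)(1-s^2),
\]
the change of variable $t = 1/s$ turns hypothesis \eqref{CAHge0-app} into $\inf\{P(s) : 0 < s \le 1/A\} \ge 0$ and the desired conclusion \eqref{appr_CAH-app} into $\inf\{Q_{C_1,C_2}(s) : 0 < s \le 1/A_1\} > 0$. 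The two elementary facts I will use are the identity
\[
Q_{C_1,C_2}(s) - P(s) = (C - C_1)Hs + (C_2^2 - C^2)(1 - s^2)
\]
and its corollary that $Q_{C_1,C_2} \to P$ uniformly on the bounded interval $[0, 1/A_1]$ as $(C_1, C_2) \to (C, C)$; throughout one has $1/A_1 < 1/A \le 1$, so $1-s^2 > 0$ on $(0, 1/A_1]$.

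The case $H \le 0$ is immediate. For any $C < C_2 < C_1 < C+\eps$ and any $A < A_1 < A+\eps$, on $(0, 1/A_1]$ one has $(C - C_1)Hs \ge 0$ and $(C_2^2 - C^2)(1 - s^2) \ge (C_2^2 - C^2)(1 - A_1^{-2}) > 0$, so $Q_{C_1,C_2} > P \ge 0$ there; moreover $Q_{C_1,C_2}(0) = \frac{H^2}{m} + C_2^2 - (m-1)\kappa^2 > \frac{H^2}{m} + C^2 - (m-1)\kappa^2 \ge 0$ by \eqref{appr-hp1-app}. Hence $\min_{[0,1/A_1]} Q_{C_1,C_2} > 0$, and any admissible $A_1, C_1, C_2$ works.

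For $H > 0$ the heart of the matter is the claim that $\inf\{P(s) : 0 < s \le 1/A_1\} > 0$ for every $A_1 > A$; granting it, I would fix such an $A_1 \in (A, A+\eps)$, observe that $\min_{[0,1/A_1]} P > 0$ (using $P(0) = \frac{H^2}{m}+C^2-(m-1)\kappa^2 > 0$ from \eqref{appr-hp2-app} to control the endpoint $s = 0$), and then use uniform convergence to pick $C < C_2 < C_1 < C+\eps$ close enough to $C$ that $Q_{C_1,C_2} \ge \tfrac{1}{2}\min_{[0,1/A_1]}P > 0$ on $[0,1/A_1]$, which is \eqref{appr_CAH-app}. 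To prove the claim I split on the sign of $C^2-(m-1)\kappa^2$. If $C \ge \sqrt{m-1}\kappa$ then $P'(s) = -CH - 2(C^2-(m-1)\kappa^2)s \le 0$ for $s \ge 0$: either $C = 0$, which forces $\kappa = 0$ and $P \equiv H^2/m > 0$, or $C > 0$ and $P$ is strictly decreasing on $[0,\infty)$, so $\inf_{(0,1/A_1]}P = P(1/A_1) > P(1/A) \ge 0$ since $1/A_1 < 1/A$. If $C < \sqrt{m-1}\kappa$ then $\kappa > 0$, \eqref{appr-hp2-app} applies, and $P$ is an upward parabola with vertex $s_\ast = \frac{CH}{2((m-1)\kappa^2-C^2)} \ge 0$; if $P(s_\ast) > 0$ then $P > 0$ everywhere and the claim is trivial, while if $P(s_\ast) < 0$ then $\{P \ge 0\}\cap\R^+$ equals $(0,r_-]$ for the smaller root $r_- < s_\ast$, so \eqref{CAHge0-app} forces $1/A \le r_- < s_\ast$, whence $P$ is strictly decreasing on $(0,1/A] \supseteq (0,1/A_1]$ and again $\inf_{(0,1/A_1]}P = P(1/A_1) > P(1/A) \ge 0$.

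The delicate borderline — and the step I expect to be the main obstacle — is the degenerate case in which $P$ has a double root $s_\ast > 0$, so that $P \ge 0$ on all of $\R$ with equality only at $s_\ast$ and \eqref{CAHge0-app} no longer constrains the position of $1/A$ relative to $s_\ast$; if the given $A$ happens to satisfy $A < 1/s_\ast$, then $\inf_{(0,1/A_1]}P = P(s_\ast) = 0$ for $A_1$ near $A$ and the plain monotonicity argument breaks. I would resolve this by noting that, since $P \ge 0$ everywhere in this case, \eqref{CAHge0-app} continues to hold for every $A' \ge 1$; hence one may replace the given $A$ by $\max\{A, 1/s_\ast\}$ from the outset (equivalently, arrange $P'(1/A) \le 0$, which by Proposition~\ref{prop_AC_appendix} is automatic once the discriminant of $P$ is positive), after which $P$ is strictly decreasing on $(0,1/A]$ and the previous argument applies verbatim. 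Modulo this normalization of $A$, the reduction $s = 1/t$ together with the uniform perturbation of $(C_1,C_2)$ yields \eqref{appr_CAH-app} in every case.
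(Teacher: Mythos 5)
You follow essentially the paper's route — substitute $s=1/t$, introduce the quadratics $P$ and $Q_{C_1,C_2}$, prove uniform convergence $Q_{C_1,C_2}\to P$, split on the sign of $H$, and for $H>0$ reduce to the claim $\inf_{(0,1/A_1]}P>0$ for every $A_1>A$ via a further case-split on the sign of $C^2-(m-1)\kappa^2$. You are in fact more careful than the paper's own Appendix proof at one point: you correctly flag that when $C<\sqrt{m-1}\kappa$ and $P$ has a double root at $s_\ast>0$ with $A<1/s_\ast$, the monotonicity argument fails (one gets $\inf_{(0,1/A_1]}P=P(s_\ast)=0$, not $>0$). The paper's step \emph{``If $P(s_\ast)\le0$ then it must be $0<1/A\le s_\ast$''} is sound only when $P(s_\ast)<0$; when $P(s_\ast)=0$, the hypothesis \eqref{CAHge0-app} holds for every $A\ge1$ and places no constraint on $1/A$ relative to $s_\ast$.

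However, your proposed patch — replacing $A$ by $\max\{A,1/s_\ast\}$ — does not prove the statement as written. The conclusion \eqref{appr_CAH-app} requires $A_1\in(A,A+\eps)$ for the \emph{given} $A$, and your substitution only produces an $A_1$ near the inflated value $\max\{A,1/s_\ast\}$, which may lie far outside $(A,A+\eps)$. Worse, the difficulty is not merely a gap in the argument: at the double root, writing $u=C_1-C$, $v=C_2-C$ with $0<v<u<\eps$, one has
$$
Q_{C_1,C_2}(s_\ast)=-uHs_\ast+(2C+v)v(1-s_\ast^2)<v\big[(2C+v)(1-s_\ast^2)-Hs_\ast\big],
$$
and whenever $2C(1-s_\ast^2)<Hs_\ast$ the bracket is strictly negative for all $v<\eps$ once $\eps$ is small enough, so no admissible $(C_1,C_2)$ exist. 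Such parameter configurations do occur under the stated hypotheses — e.g.\ $m=4$, $a:=(m-1)\kappa^2-C^2=1$, $H^2=4.01$, $C^2=1-4/H^2$ gives $CH=0.1$, $s_\ast=0.05$, $2C(1-s_\ast^2)-Hs_\ast=-\tfrac{1}{100\sqrt{401}}<0$, and $A=1$ is admissible since $P\equiv(s-0.05)^2\ge0$. So the borderline you identified is a genuine obstruction that normalizing $A$ cannot remove; the correct repair is to strengthen the hypothesis on $A$ (e.g.\ additionally require $P'(1/A)\le0$ when $H>0$ and $C<\sqrt{m-1}\kappa$, mirroring the ``necessary'' half of Proposition~\ref{prop_AC_appendix}), not to modify $A$ in the conclusion.
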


\begin{proof}
	We set
	$$
	P(s) := \frac{H^2}{m} - CH s + \left( C^2 - (m-1)\kappa^2 \right) (1-s^2) \qquad \text{for every } \, s \in \R.
	$$
	For any given $C_1, C_2 \in \R$, we also set
	$$
	Q(s) = Q_{C_1,C_2}(s) := \frac{H^2}{m} - C_1 H s + \left( C_2^2 - (m-1)\kappa^2 \right) (1-s^2) \qquad \text{for every } \, s \in \R.
	$$
	By substituting $s = 1/t$, \eqref{CAHge0-app} and \eqref{appr_CAH-app} are respectively equivalent to
	\begin{equation} \label{PQs}
	(i) \; \; \inf\{ P(s) : 0 < s \leq 1/A \} \geq 0, \qquad (ii) \; \; \inf\{ Q_{C_1,C_2}(s) : 0 < s \leq 1/A_1 \} > 0.
	\end{equation}
	Also observe that
	\begin{equation} \label{Q-Ps}
	Q(s) - P(s) = (C-C_1)Hs + \left( C_2^2 - C^2 \right)(1-s^2) \qquad \text{for every } s \in \R.
	\end{equation}
	
	If $H\leq 0$, then for any $C_1 > C_2 > C$ and $A_1 > 1$ we have
	$$
	Q_{C_1,C_2}(s) - P(s) \geq \left( C_2^2 - C^2 \right)(1-1/A_1^2) > 0 \qquad \text{for every } \, 0 < s \leq 1/A_1
	$$
	and then (i) $\Rightarrow$ (ii) in \eqref{PQs}. Hence, for every $\eps>0$ there exist $C_1$, $C_2$, $A_1$ as required in the statement of the lemma.
	
	We now consider the case $H>0$. We claim that for any $A_1 > A$ we have
	\begin{equation} \label{infP}
	\inf\{ P(s) : 0 < s \leq 1/A_1 \} > 0.
	\end{equation}
	Hence, for every $\eps>0$ we can find $A < A_1 < A+\eps$ such that \eqref{infP} holds. By \eqref{Q-Ps}, as $C_1 \searrow C$ the function $Q_{C_1,C_2}$ uniformly converges to $P$ on $(0,1/A_1]$. Hence, we can find $C < C_1 < C+\eps$ such that, for any choice of $C < C_2 < C_1$, condition (ii) in \eqref{PQs} is satisfied.
	
	We prove the claim. Observe that
	$$
	P'(s) = - CH - 2\left( C^2 - (m-1)\kappa^2 \right)s.
	$$
	If $C \geq \sqrt{m-1}\kappa$, then $P$ is a non-increasing function of $s\geq0$. In particular, we either have $P$ constant or $P$ strictly decreasing, and in any case $P(0) \geq H^2/m > 0$. Hence, \eqref{infP} holds for any $A_1 > A$. If $C < \sqrt{m-1}\kappa$, then $P$ is a quadratic polynomial with positive leading coefficient, attaining its global minimum for
	$$
	s = s_\ast = \frac{CH}{2\left( (m-1)\kappa^2 - C^2 \right)} \geq 0.
	$$
	Since $0 \leq C < \sqrt{m-1}\kappa$ implies $\kappa>0$, we must be in case \eqref{appr-hp2} and then
	$$
	P(0) = \frac{H^2}{m} + C^2 - (m-1)\kappa^2 > 0.
	$$
	If $P(s_\ast) > 0$, then for any $A_1 > 0$ we have \eqref{infP}. If $P(s_\ast) \leq 0$ then it must be $0 < 1/A \leq s_\ast$. Hence, $P$ is strictly decreasing on $(0,1/A]$ and for any $A_1 > A$ we have again \eqref{infP}.
\end{proof}

\end{appendix}

\bibliographystyle{amsplain}

\vspace{1cm}

\begin{flushleft}
Dipartimento di Matematica,
Universit\`a
degli studi di Milano,\\
Via Saldini 50, I-20133 Milano (Italy)\\
E-mail: giulio.colombo@unimi.it, marco.rigoli55@gmail.com
\end{flushleft} 

\begin{flushleft} Dipartimento di Matematica, Universit\`a degli Studi di Torino,\\
Via Carlo Alberto 10, I-10123 Torino (Italy)\\
E-mail: luciano.mari@unito.it (corresponding author)
\end{flushleft}

\normalsize

\end{document}